\documentclass[reqno,12pt]{amsart}

\usepackage{graphicx,pdfsync,amssymb, latexsym,amsfonts,amsbsy,color,subcaption,esint,mathtools,enumerate,nicefrac,bm}
\usepackage{hyperref}
\usepackage{booktabs}
\usepackage{multirow}
\usepackage{appendix}
\hypersetup{
colorlinks=true,
linkcolor=blue,
filecolor=blue,
urlcolor=blue,
citecolor=blue,
}

\usepackage{float}
\usepackage{mathrsfs}
\restylefloat{table}

\usepackage{enumitem}
\setitemize{itemsep=0.4em}
\setenumerate{itemsep=0.5em}

\usepackage[utf8]{inputenc}
\usepackage[T1]{fontenc}
\usepackage[top=1.1in, bottom=1in, left=1in, right=1in]{geometry}

\DeclareMathOperator{\curl}{curl}

\newtheorem{theorem}{Theorem}[section]
\newtheorem{lemma}[theorem]{Lemma}
\newtheorem{proposition}[theorem]{Proposition}
\newtheorem{definition}[theorem]{Definition}

\newtheorem{remark}[theorem]{Remark}

\newcommand{\thistheoremname}{}
\newtheorem{genericthm}[theorem]{\thistheoremname}

 \newtheorem*{genericthm*}{\thistheoremname}
\newenvironment{namedthm*}[1]
  {\renewcommand{\thistheoremname}{#1}%
   \begin{genericthm*}}
  {\end{genericthm*}}

\usepackage{times}
\usepackage{setspace}
\newcommand{\dd}{\mathop{}\!\mathrm{d}}
\let\del\partial

\newcommand{\R}{\mathbb R}
\newcommand{\QQ}{\mathbb Q}
\newcommand{\ZZ}{\mathbb Z}

\newcommand{\BMO}{{\rm BMO}}

\newcommand{\uin}[0]{u^{\textup{in}}}

\newcommand{\bin}[0]{B^{\textup{in}}}

\newcommand{\PP}[0]{\mathbb{P}_{\neq 0}}

\newcommand{\TT}[0]{\mathsf{T}}
\newcommand{\TTT}[0]{\mathbb{T}}

\newcommand{\wtq}[1][q+1]{w^{\textup{(mhd)}}_{2q}}
\newcommand{\wpq}[1][q+1]{w^{\textup{(pri)}}_{2q}}
\newcommand{\wpp}{ {w}^{\text{(h)}} }
\newcommand{\wpm}{ {w}^{\text{(h,m)}} }
\newcommand{\wpr}{ {w}^{\text{(h,r)}} }
\newcommand{\wss}{ {w}^{\text{(i)}} }

\newcommand{\wmhd}{ {w}^{\text{(m)}} }
\newcommand{\dmhd}{ {d}^{\text{(m)}} }

\newcommand{\ii}{\textup{i}}

\newcommand{\Div}{\text{div}}

\newcommand{\LinfB}[1][q+1]{\widetilde{L}^\infty_T B^{\frac{1}{2}}_{2,1}}
\newcommand{\LoB}[1][q+1]{{L}^1_t B^{1/2}_{2,1}}
\newcommand{\LoBt}[1][q+1]{\widetilde {L}^1_t B^{5/2}_{2,1}}
\newcommand{\spt}{\text{spt}}
\newcommand{\kp}{{k}^{\perp}}

\allowdisplaybreaks[4]

\numberwithin{equation}{section}

\begin{document}

\title{ Sharp non-uniqueness of weak solutions to 2D 
magnetohydrodynamic equations}

\author{Changxing Miao}
\address[Changxing Miao]{Institute  of Applied Physics and Computational Mathematics, Beijing, China.}

\email{miao\_changxing@iapcm.ac.cn}

\author{Yao Nie}

\address[Yao Nie]{School of Mathematical Sciences and LPMC, Nankai University, Tianjin, China.}

 \email{nieyao@nankai.edu.cn}

\author{Weikui Ye}

\address[Weikui Ye]{School of Mathematical Sciences, Shenzhen University, Shenzhen,  China}

 \email{904817751@qq.com}


\date{\today}
\maketitle

\begin{abstract}In this paper, we prove that weak solutions to the 2D viscous and resistive magnetohydrodynamic (MHD) equations are non-unique in 
$L^2_t L^p(\R^2) \cap L^1_t W^{1,p}(\R^2)$ for given any  $1\le p<\infty$, 
showing the sharpness of the Ladyzhenskaya--Prodi--Serrin condition at the endpoint 
$(2,\infty)$ and the solutions live on the borderline of the 
Beale--Kato--Majda criterion.  
As byproducts, we also obtain non-uniqueness for the Navier--Stokes equations in $L^2_t L^p$ 
with $1\le p<\infty$, and for the MHD system with large $\mathrm{BMO}^{-1}$ 
initial data.
\end{abstract}

\emph{Keywords}: \small{MHD system, sharp non-uniqueness,  global solutions.}

\emph{Mathematics Subject Classification}: \small{35Q30,~35Q35,~76D03.}

\section{Introduction}
In this paper, we consider the incompressible MHD equations  on
$\R^2$
:
\begin{equation}
\left\{
\begin{aligned}
&
\partial_t u-\Delta u+(u\cdot\nabla) u+\nabla p=B\cdot\nabla B,\\
& \partial_t B-\Delta B+u\cdot\nabla B  =B\cdot\nabla u,\\
&\nabla\cdot
 u=\nabla \cdot B  = 0,
\end{aligned}
\right. \label{MHD}
\end{equation}
supplemented with a divergence-free initial datum
$(\uin,\bin) $. Here $u$ is the velocity field, $B$ the magnetic field and $p$ the pressure.

For the Navier--Stokes equations (i.e., $B=0$ in \eqref{MHD}), Leray's pioneering work \cite{L} demonstrated the existence of global weak solutions for any initial data $\uin\in L^2(\R^3)$. These solutions obey the energy inequality
\[
\|u(t)\|_{L^2}^{2}+2\int_{0}^{t}\|\nabla u(s)\|_{L^2}^{2}\,\mathrm{d}s \leq \|\uin\|_{L^2}^{2},\qquad \forall\, t\ge 0.
\]
Later, Hopf \cite{Hopf} extended this existence theory to smooth bounded domains in any dimension $d\ge 2$ under Dirichlet boundary conditions. Such weak solutions are now commonly referred to as {Leray--Hopf weak solutions}.  For dimension $d\ge 3$, however, the question of uniqueness and regularity for Leray-Hopf weak solutions remains open. 

The natural scaling of the Navier--Stokes equations in $\mathbb{R}^d$ is given by
\[
u(t,x) \mapsto \lambda u(\lambda^2 t, \lambda x),\qquad p(t,x) \mapsto \lambda^2 p(\lambda^2 t, \lambda x),\quad \lambda>0.
\]
Under this scaling, local well-posedness results for the Cauchy problem have been established in various scaling-invariant spaces (see, e.g., \cite{Cannone95, Can97, CM95, FK, Kato84, KochTataru01, Planchon96}). The critical scaling in the mixed Lebesgue spaces $L^q([0,T]; L^p(\R^d))$ leads to the Ladyzhenskaya--Prodi--Serrin (LPS for short) condition
\begin{align}\label{LPS}
\frac{2}{q}+\frac{d}{p}\le 1,\quad d\le  p\le\infty.
\end{align}
If a Leray–Hopf weak solution $u$ belongs to $L^q_tL^p_x$ with \eqref{LPS}, then the solution is unique for all $p\ge d$ and smooth for $p>d$ \cite{KS, Lady, Prodi, Serr}. For dimension $d=3$, smoothness also holds at the endpoint $(q,p)=(\infty, 3)$, as proved by Escauriaza–Seregin–\v{S}ver\'{a}k \cite{ESS}. In fact, for very weak solutions, the LPS condition \eqref{LPS} excluding the endpoint $p=d=3$ also provides a uniqueness criterion \cite{FJR, LM}, while  the endpoint case $p=d=3$ requires the stronger space $C([0,T];L^3(\mathbb{R}^3))$ \cite{FLT}. 

On the non-uniqueness of weak solutions to the Navier--Stokes equations, one elegant strategy developed by Jia and \v{S}ver\'{a}k \cite{JS14,JS} exploits the nonlinear instability of a steady  solution in the self-similar dynamics. In the forced case, Albritton, Bru\'{e}, and Colombo \cite{ABC} adapted Vishik's construction \cite{V18a,V18b} to prove non-uniqueness of Leray--Hopf weak solutions, consistent with the Jia--\v{S}ver\'{a}k framework. Recently, Hou, Wang, and Yang \cite{HWY} claim to prove non-uniqueness of Leray--Hopf solutions for the unforced 3D Navier--Stokes equations via a computer-assisted verification of key spectral properties. Another major direction stems from convex integration scheme, which originates in Nash’s $C^1$ isometric embedding theorem \cite{Nash}. It was first introduced into the Euler equations in the seminal works of De Lellis and Székelyhidi \cite{DS09, DS13}, and subsequently extended to study the non-uniqueness of solutions for the Navier–Stokes equations. For example, on the periodic setting $\mathbb{T}^d$, Buckmaster and Vicol \cite{BV} constructed the first non-unique weak solutions to the Navier--Stokes equations in $C_t L^2$ for $d=3$. Cheskidov--Luo proved non-uniqueness in $L^\infty_t L^p$ ($1\le p<2$) for $d=2$ \cite{CL23}, and in $L^q_t L^\infty$ ($1\le q<2$) for $d\ge 2$ \cite{1Cheskidov}, thereby demonstrating the sharpness of the LPS criteria in the endpoint cases $(q,p)=(\infty,2)$ and $(2,\infty)$, respectively. In contrast, on the whole space $\mathbb{R}^3$, the authors in \cite{MNY-ar2, MNY-ar1} develop a new iterative framework to  prove non-uniqueness in the classes $C_t L^2$ and $L^q_t L^\infty$ for $1\le q<2$.
In a separate development, Coiculescu and Palasek \cite{CP} recently pioneered a novel mechanism  to construct two distinct global solutions in $\BMO^{-1}(\TTT^3)$.

For the MHD equations \eqref{MHD}, the existence of weak solutions in the Leray--Hopf class were established ~\cite{DL, Wu}, while the questions of uniqueness and regularity remain open.  Analogous to the Navier--Stokes equations, one may expect that if a Leray--Hopf calss of weak solution $(u,B)$ to \eqref{MHD} in $L^q_tL^p_x$ with LPS condition \eqref{LPS} is unique. For the weak solutions, the LPS condition (excluding the endpoint $p=d=3$) also serves as a uniqueness criterion. At the endpoint $q=d=3$, the general weak solution $(u,B)$ is unique in the smaller space $C_t L^3$. The definition of the  weak solutions is as follows:
\begin{definition}[Weak solution]\label{def-weak}
 A pair \((u(x,t), B(x,t))\) is called a weak solution of \eqref{MHD} with initial data $(\uin, \bin)\in \mathcal{D}'(\R^d)$ if the following conditions hold:
\begin{itemize}
    \item [(i)] $u, B \in L^2([0,T];L^2(\R^d))$.
     \item [(ii)] For all test functions \(\varphi, \psi \in \mathcal{D}_T\) (i.e., \(\varphi,\psi\in C_c^\infty([0,T)\times\mathbb{R}^d)\) with \(\operatorname{div}\varphi = \operatorname{div}\psi = 0\)),
\[
\begin{aligned}
&\int_0^T\int_{\mathbb{R}^d} \Bigl( u\cdot(\partial_t\varphi + \Delta \varphi)+ \nabla \varphi: (u\otimes u-B\otimes B)\Bigl) \,\dd x\,\dd t  = -\int_{\mathbb{R}^d} \uin(x)\cdot\varphi(x,0)\,\dd x,\\
&\int_0^T\int_{\mathbb{R}^d} \Bigl( B\cdot(\partial_t\psi + \Delta \psi)+ \nabla \psi: (B\otimes u-u\otimes B)\Bigl) \,\dd x\,\dd t  = -\int_{\mathbb{R}^d} \bin(x)\cdot\psi(x,0)\,\dd x.
\end{aligned}
\]
\item[(iii)] For almost every \(t\in[0,T]\), \(\operatorname{div}(u(\cdot,t)) = 0\) and \(\operatorname{div}(B(\cdot,t)) = 0\) in the sense of distributions.  
\end{itemize}   
\end{definition}

Since the LPS condition \eqref{LPS} serves as a uniqueness criterion for weak solutions of the MHD equations~\eqref{MHD}, a natural question arises:
\vskip 1.5mm
\quad(Q): \textit{Is the LPS criterion \eqref{LPS} sharp for the uniqueness of weak solutions  $(u,B)$ of \eqref{MHD} with a non‑trivial magnetic field? Alternatively, are such weak solutions $(u,B)$ of \eqref{MHD} non-unique in the class $L^q_t L^p_x$ with $\frac{2}{q} + \frac{d}{p} > 1$?}
\vskip 1.5mm

The non-uniqueness problem for the MHD equations has also drawn considerable attention in the literature. For the ideal MHD system, smooth solutions conserve three important quantities: the total energy $\mathcal{E}$, the cross helicity $\mathcal{H}_c$, and the magnetic helicity $\mathcal{H}_m$. This intrinsic feature naturally directs attention to the critical regularity thresholds for the conservation of these invariants. The first construction of nontrivial weak solutions violating total energy conservation was given by Bronzi, Lopes Filho, and Nussenzveig Lopes \cite{BLN}. Subsequently, Faraco, Lindberg, and Sz\'ekelyhidi \cite{FLS} constructed compactly supported $L^\infty$ weak solutions in $\mathbb{R}^3$ for which $\mathcal{E}$ and $\mathcal{H}_c$ (the Els\"asser energies) are not conserved, while $\mathcal{H}_m$ vanishes identically. The first non‑conservative weak solutions with non‑trivial $\mathcal{H}_m$, obtained by Beekie, Buckmaster, and Vicol~\cite{BBV}, lie in $C_t L^p(\mathbb{T}^3)$ for some $p>2$ and fail to conserve $\mathcal{E}$, $\mathcal{H}_c$, and $\mathcal{H}_m$. Later, Faraco, Lindberg, and Sz\'ekelyhidi \cite{FLS24} refined their earlier construction and identified $L^3_{t,x}$ as the sharp threshold for magnetic helicity conservation. The present authors in \cite{MNY2} constructed $C^{1/3-}_{t,x}$ weak solutions on $\mathbb{T}^3$ and $C^{1/5-}_{t,x}$ weak solutions on $\mathbb{T}^2$, both of which do not conserve the Els\"asser energies, thereby resolving the flexible part of the Onsager-type conjecture for Els\"asser energies in the three-dimensional case. More recently,  Enciso, Pe\~nafiel-Tom\'as, and Peralta-Salas \cite{EPP} established the first H\"older continuous solutions that maintain a non-zero $\mathcal{H}_m$ without conserving the Els\"asser energies. 

{For the viscous and resistive MHD equations, the existence of various weak solutions is well understood, yet uniqueness is far from settled. In particular, for Leray--Hopf weak solutions with $d \ge 3$, and for the weak solutions defined in \eqref{def-weak} with $d \ge 2$, the uniqueness problem remains open. Only a few works have been devoted to the study of non‑uniqueness of weak solutions for the MHD system.} Li, Zeng, and Zhang \cite{LZZ} proved the non-uniqueness of weak solutions in $H^\varepsilon_{t,x}$ for some small $\varepsilon$ by constructing new velocity and magnetic flows with refined spatial-temporal intermittency that respect the MHD geometry and control strong viscosity and resistivity. Subsequently, Miao and Ye \cite{MY} introduced box flows and constructed perturbations consisting of seven different types of flows to overcome dissipation, thereby proving the non-uniqueness of weak solutions in {$C_tL^2(\mathbb{T}^3)$ }  with controllable total energy and cross helicity.  The authors of \cite{LZZ24, NY} established the non-uniqueness of weak solutions in $L^q L^\infty(\mathbb{T}^3)$ for $1\le q<2$, which demonstrates the sharpness of the endpoint $(q,p)=(2,\infty)$ in the LPS criterion \eqref{LPS} for $\mathbb{T}^d$ with $d\ge 3$. {However, the convex integration methods  in \cite{LZZ24, NY} break down in the two-dimensional setting.} Recently, Dai \cite{Dai} constructed solutions whose $L^\infty$ norm blows up instantaneously, and showed non-uniqueness of spatial smooth solutions.

{For weak solutions of the MHD equations \eqref{MHD}, the Serrin criterion is a sufficient condition for uniqueness when $d\ge 2$. However, no results are available on the sharpness of this criterion in the two-dimensional case on the whole space. In this paper, we construct two distinct weak solutions sharing the same initial data to demonstrate the sharpness of the classical LPS uniqueness criterion for the MHD equations on $\mathbb{R}^2$ at the endpoint $(2,\infty)$.} Our result is as follows. 

\begin{theorem}[Sharp non-uniqueness]\label{t:main}
For given  $1\le p<\infty$, there exist two distinct  weak solutions of the equations \eqref{MHD} with the same initial data and
\begin{align*}
(u,B),~(\widetilde{u},\widetilde{B})  \in   L^{2}([0,T];L^p(\mathbb{R}^2))\cap L^{1}([0, T];W^{1,p}(\mathbb{R}^2)).
\end{align*}
\end{theorem}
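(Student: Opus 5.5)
The plan is a convex integration scheme on $\R^2$, in the spirit of the whole-space iteration of \cite{MNY-ar2, MNY-ar1}, adapted to the MHD geometry. The target space $L^2_tL^p\cap L^1_tW^{1,p}$ allows temporal intermittency but no spatial concentration at the $L^\infty$ level.

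\textbf{Step 1: relaxed system.} Consider the approximate system
\[
\partial_t u_q-\Delta u_q+\Div(u_q\otimes u_q-B_q\otimes B_q)+\nabla p_q=\Div \mathring R^u_q,\qquad \partial_t B_q-\Delta B_q+\Div(B_q\otimes u_q-u_q\otimes B_q)=\Div \mathring R^B_q ,
\]
where $\mathring R^u_q$ is symmetric traceless and $\mathring R^B_q$ is antisymmetric. In 2D the antisymmetric stress is a scalar multiple of $J=\begin{pmatrix}0&1\\-1&0\end{pmatrix}$.

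\textbf{Step 2: starting pair and iteration.} Start from a smooth, spatially decaying solution $(u_0,B_0)$ with nontrivial $B_0$, for instance a global strong solution. In 2D such solutions are unique and globally regular. Also fix a second smooth datum-compatible profile that differs from it away from $t=0$. The iteration produces $(u_{q+1},B_{q+1})=(u_q,B_q)+(w_{q+1},d_{q+1})$ such that:
\begin{itemize}
\item $\|\mathring R_{q+1}\|_{L^1_{t,x}}\to0$;
\item $\sum_q \|(w_{q+1},d_{q+1})\|_{L^2_tL^p\cap L^1_tW^{1,p}}<\infty$;
\item the perturbations vanish near $t=0$, with an exponentially small first step relative to the separation.
\end{itemize}
Together these give a weak solution with the same initial data that differs from the strong solution, which proves the theorem.

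\textbf{Step 3: building blocks.}
\begin{itemize}
\item The magnetic stress is handled first by a perturbation pair $(w^{(m)},d^{(m)})$ built from 2D shear flows $a(t,x)\,\psi(\lambda k^\perp\!\cdot x)k$ with a common direction $k$. Then $d\otimes w-w\otimes d$ cancels $\mathring R^B$ through the antisymmetric part.
\item The residual velocity stress, which now includes $d^{(m)}\otimes d^{(m)}$, is cancelled by a second pure-velocity perturbation $w^{(h)}$ via the standard geometric lemma for symmetric matrices.
\item To defeat the dissipation $\Delta$ in 2D, where spatial intermittency is weak, concentrate in time using intermittent temporal bumps $g_\kappa(\mu t)$ with $\|g_\kappa\|_{L^2}\sim1$ and $\|g_\kappa\|_{L^1}\sim\kappa^{-1/2}$.
\item Add temporal correctors $w^{(t)}$ that absorb $\partial_t$ of the high-frequency-in-time oscillation: $\partial_t w^{(t)}=-\PP\,\Div(g_\kappa^2\,\cdot)$.
\end{itemize}
This yields the two bounds $\|w\|_{L^2_tL^p}\lesssim \|R_q\|^{1/2}_{L^1}$ and $\|w\|_{L^1_tW^{1,p}}\lesssim \lambda\kappa^{-1/2}\|R_q\|^{1/2}$. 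The second is small once $\kappa\gg\lambda^2$. The corresponding $L^1_tL^\infty$ gain is exactly why the scheme reaches $p<\infty$ but not $p=\infty$.

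\textbf{Step 4: the whole-space issue.} On $\R^2$ the stresses are not compactly supported, and $\Div^{-1}$ loses decay. Following \cite{MNY-ar1}, I would split $\mathring R_q$ into a spatially localized part, cut off at a radius $L_q\to\infty$, and a tail part. The tail is small in $L^1$ thanks to the decay of $(u_0,B_0)$. The oscillatory shear building blocks are placed only on the localized part, using periodic profiles multiplied by cutoffs. The inverse divergence is taken in the Bogovskii-type or $\R^2$ Fourier-multiplier sense on mean-zero high-frequency pieces, so that $L^1$ bounds hold up to $\lambda^{-1}$ gains.

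\textbf{Main obstacle.} The hardest step is the interaction terms in the new stress: the oscillation error of the magnetic part, the mixed terms $w^{(h)}\otimes d^{(m)}$ that appear in the induction equation, and the linear terms $\Delta d$ and $\Div(B_q\otimes w)$. All of these must be small in $L^1$ simultaneously in 2D, where shear flows give no spatial intermittency.

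For the mixed terms, my first attempt would be to choose the directions of $w^{(h)}$ parallel to those of $d^{(m)}$, or to put them on disjoint temporal supports via shifted bumps $g_\kappa(\mu t-\theta)$. Either choice makes the products vanish or average out. The linear terms would then be beaten by the time intermittency with $\mu\ll\kappa$ and by the choice $\lambda_{q+1}=\lambda_q^{b}$ for large $b$, depending on $p$.
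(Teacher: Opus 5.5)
There are two genuine gaps, and each breaks the scheme as written.

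\textbf{The magnetic stress cannot be cancelled by 2D shear flows.} Suppose $w^{(m)}=a_1\psi(\lambda\kp\cdot x)k$ and $d^{(m)}=a_2\psi(\lambda\kp\cdot x)k$ share the direction $k$, as you propose. Then $d^{(m)}\otimes w^{(m)}-w^{(m)}\otimes d^{(m)}=a_1a_2\psi^2\,(k\otimes k-k\otimes k)\equiv 0$, so no multiple of $J$ is produced.

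To get $k_1\otimes k_2-k_2\otimes k_1\neq 0$ you need non-parallel $k_1,k_2$. But divergence-freeness then forces the two strips to oscillate along different directions $k_1^\perp\neq\pm k_2^\perp$. The product $\psi_1(\lambda k_1^\perp\cdot x)\psi_2(\lambda k_2^\perp\cdot x)$ then has no low-frequency part for mean-zero profiles.

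In 3D one escapes by letting the velocity and magnetic blocks point along two orthogonal vectors while oscillating along a third direction orthogonal to both. In 2D no such direction exists: on a 2D plane wave, $u$ and $B$ are both orthogonal to the wave vector, hence parallel. This is the rigidity behind the empty-interior $\Lambda$-convex hull the paper cites, and the reason the 3D schemes fail in 2D.

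The paper sidesteps this entirely: $B$ is never used to cancel anything. The increment $\dmhd_n$ is just the perturbative solution of \eqref{e:wt} with data $(0,\lambda^{-50}_{2q-1}f)$ and tiny forcing $G_{2q}$, and the non-uniqueness is carried by the velocity.

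\textbf{Temporal intermittency alone does not beat the linear terms.} For non-concentrated profiles the anti-divergence gains exactly $\lambda^{-1}$. The dissipation error is $\|\Div^{-1}\Delta w\|_{L^1_{t,x}}\sim\lambda\kappa^{-1/2}$. The error from $\partial_t w$, which you do not list, is $\|\Div^{-1}\partial_t w\|_{L^1_{t,x}}\sim\lambda^{-1}\|\partial_t[g_\kappa(\mu t)]\|_{L^1_t}\sim\lambda^{-1}\mu\kappa^{1/2}$.

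The product of these two errors is $\sim\mu$, so both are small only if $\mu\ll 1$. The temporal corrector $w^{(t)}\sim\mu^{-1}h_\kappa(\mu t)\PP\Div(a^2\cdots)$ instead needs $\mu\gg 1$. Your condition $\mu\ll\kappa$ does not address this.

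The usual remedy is spatial concentration of the strips, so that the anti-divergence gains the concentration scale. But a strip concentrated by $\sigma$ costs $\sigma^{1/2-1/p}$ in $L^p$. At the endpoint $L^2_t$, the normalization $\|g_\kappa\|_{L^2}\sim 1$ leaves no temporal slack to pay for it. Also, your bound should read $\|w\|_{L^2_tL^p}\lesssim\|R_q\|^{1/2}_{L^1_tL^{p/2}}$, not $\|R_q\|^{1/2}_{L^1}$.

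\textbf{What the paper does instead.} It never relaxes the equations. It uses
$\wpp_n\sim\lambda_n e^{-\lambda_n^2t}\sum_k a_{(k,n)}\chi_{n-1}\eta_{n-1}\phi(\lambda_n^r k\cdot x)e^{\ii\lambda_n k\cdot x}\kp$.
Its oscillatory factor solves the heat equation exactly, so $(\partial_t-\Delta)\wpp_n$ only sees derivatives of slow amplitudes. It also solves steady Euler, so high--high interactions go into the pressure.

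The low-frequency part $2\lambda_n^2e^{-2\lambda_n^2t}\wpp_{n-1}$ of $\Div(\wpp_n\otimes\wpp_n)$ is cancelled by $\partial_t\wss_n$ with $\wss_n=\wpp_{n-1}e^{-2\lambda_n^2t}$. The remaining $\lambda_{n-1}^{-30}$-small errors are absorbed by solving \eqref{e:wt} perturbatively. This works because $e^{-\lambda_n^2t}$ keeps $\|\nabla u_n\|_{L^1_tL^\infty}=O(n)$.

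Since $\|\wpp_n\|_{L^2_tL^\infty}=O(1)$, summability in $L^2_tL^p$ comes from the shrinking supports of $\chi_n$ (a factor $2^{-n/p}$). That is exactly where $p<\infty$ enters. The two solutions are the limits of the odd and even sequences, whose initial data differ by $\wpp_{2q+1}(0)\to 0$.
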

\begin{remark}
We highlight several key aspects of Theorem \ref{t:main} regarding comparison with previous works and its applications.

\begin{enumerate}
\item [(i)] Theorem \ref{t:main} remains valid on the periodic torus $\TTT^2$ as well, and our method extends without essential change to all dimensions $d\ge 2$. Therefore,  Theorem \ref{t:main}  implies the sharpness of the
LPS criteria \eqref{LPS} at the endpoint $(q,p)=(2,\infty)$ for $d\ge 2$. Particularly, for given $d<p<\infty$, there exist two distinct  weak solutions of the equations \eqref{MHD} with the same initial data and
\begin{align*}
(u,B),~(\widetilde{u},\widetilde{B})  \in   L^{2}([0,\infty);L^p(\mathbb{R}^d))\cap L^{1}([0,\infty);W^{1,p}(\mathbb{R}^d))\cap C^\infty(\R^{+}\times\R^d).
\end{align*}

\item [(ii)] For the MHD equations in dimensions $d\ge 3$, convex integration method have been employed to establish non‑uniqueness of weak solutions in the endpoint class $L^q([0,T];L^\infty)$ with $1\le q<2$ \cite{LZZ24, NY}.  The method developed in this paper is independent of  convex integration scheme. In fact, a close inspection of our construction reveals that there exist two distinct weak solutions belonging to $L^q([0,T];L^\infty)$ with the same initial data for $1\le q<2$ and any $d\ge2$. Hence our approach offers a unified framework that not only recovers the known non‑uniqueness phenomena in $L^qL^\infty$, but also extends them to the previously unexplored regime $q=2$ and $p<\infty$.

\item [(iii)] The global well-posedness of \eqref{MHD} for small initial data in $\mathrm{BMO}^{-1}$ was established in \cite{Miao2007}. In fact, the weak solutions obtained in Theorem~\ref{t:main} enjoy the further regularity $L^\infty(\mathbb{R}^{+};\mathrm{BMO}^{-1}(\mathbb{R}^2))$ by following from the method of \cite{CP}. Our result thus reveals non-uniqueness for large data $\mathrm{BMO}^{-1}$, thereby demonstrating the sharpness of the well-posedness result for small data in $\mathrm{BMO}^{-1}$ for the system \eqref{MHD}. Moreover, the argument in this paper can also be spplied to prove non-uniqueness for the Navier–Stokes equations with large $\mathrm{BMO}^{-1}$ data, thereby extending the recent remarkable work~\cite{CP} from the torus to the whole space.

\item [(iv)] In our construction, $u\neq \widetilde{u}$. As a byproduct, our method also shows the non-uniqueness for the Navier–Stokes equations in the class $L^2([0,T];L^p(\R^d))$ with $1\le p<\infty$, as well as in $L^q([0,T];L^\infty(\R^d))$ with $1\le q<2$. These results extend the important result of Cheskidov and Luo~\cite{1Cheskidov}.
\end{enumerate}
\end{remark}
Below we compare different non-uniqueness results of the MHD equations \eqref{MHD} using the scales of space-time Lebesgue
spaces $L^q_tL^p_x$.
\begin{table}[ht]
   \footnotesize
\begin{tabular}{p{3cm}|p{3.2cm}|p{2.8cm}|p{2.8cm}}
\toprule[1.5pt]
\multicolumn{4}{c} {Non-uniqueness results of the MHD equations \eqref{MHD}} \\\midrule[1pt]
Results&Range&dimension &Scaling \\\midrule[1pt]
\cite{LZZ}&$\frac{1}{q}+\frac{2}{p}>\frac{3}{2}$&{$d=3$}& $\frac{2}{q}+\frac{d}{p}>\frac{9}{4}$\\[4pt]\hline
\cite{MY}&{$q=\infty, p=2$}& $d=3$&$\frac{2}{q}+\frac{d}{p}=\frac{3}{2}$\\[4pt]\hline
\cite{LZZ24, NY}&{$1\le q<2,p=\infty$}&${d=3}$& $\frac{2}{q}+\frac{d}{p}>1$\\[4pt]\hline
Theorem \ref{t:main}&{$q=2, 1\le p<\infty$}&${d=2}$& $\frac{2}{q}+\frac{d}{p}>1$\\[4pt]\bottomrule
\end{tabular}
\end{table}

As shown in the table, all previous non‑uniqueness results for the MHD equations \eqref{MHD} were obtained in the three‑dimensional setting via convex integration methods. This is primarily because the higher dimension offers more choices of oscillation directions for constructing building blocks, thereby rendering the convex integration scheme more feasible in three dimensions than in two. For the two‑dimensional ideal MHD equations,  as shown in~\cite{FL2018,FLS}, the $\Lambda$-convex hull $K^{\Lambda}$ has an empty interior, which consequently leaves ``little room'' for a convex integration argument.

To our knowledge, the only convex integration result in two dimensions for the MHD equations to date is the construction by \cite{MNY2} of Hölder weak solutions that exhibit energy dissipation in the ideal (i.e., inviscid and non‑resistive) case. The construction in \cite{MNY2} exploits the finite propagation speed property. Nevertheless, it is not applicable to the viscous and resistive MHD system. We develop a new mechanism that is entirely different from those used in previous non‑uniqueness results for the MHD equations. In what follows, we describe our main ideas in more details. 
\vskip 2mm
\noindent {\textbf{Main ideas}.} The construction in Theorem \ref{t:main} is based on a new iterative scheme that yields exact solutions to the MHD equations \eqref{MHD} at each iteration step. This scheme differs fundamentally from the approaches used in previous works on non‑uniqueness for the MHD equations \cite{LZZ,LZZ24, MY,NY}, where one constructs approximate solutions to a relaxed system rather than exact solutions to the original equations. More precisely,  we give a solution $(u_1, B_1)$ of system \eqref{MHD}, and assume that for $n \ge 3$ we already have $\{u_j, B_j\}_{1 \le j < n}$ as solutions to the MHD equations \eqref{MHD}. Our main task is to construct $(u_n, B_n)$ by building an increment $(w_n, d_n)$ on  $(u_{n-2}, B_{n-2})$, so that the resulting pair $(u_n, B_n)$ remains a solution to the MHD equations. The construction of $(w_n, d_n)$ consists of two parts: the first part is an explicit construction based on the previously known increment $w_{n-1}$, and the second part is obtained via the solvability of a forced  MHD equations.

The explicit construction consists of two components: \textit{the heat-dominated Fourier mode flow $\wpp_n$} and \textit{the inverse cascade-dominated flow $\wss_n$}. Since our target space is $L^2_t L^p$ with $p<\infty$, the intermittent building blocks employed in \cite{LZZ24, NY} become ineffective when $p>2$. Hence, the construction presented here is completely different from those in previous works. The main part $\wpp_n$ is constructed as follows:
\[
\wpp_n \sim \lambda_n e^{-\lambda_n^2 t} \sum_{k\in\Lambda} a_k \left( \mathrm{Id} + \frac{\mathcal{R} w_{n-1}}{1000\|\mathcal{R} w_{n-1}\|_{L^\infty}} \right) \eta_{n-1} \chi_{n-1} \, \phi(\lambda_n^r k\cdot x)\, e^{\ii\lambda_n k\cdot x} \,\bar{k} \;+\; \text{l.o.t.},
\]
where $\phi(\lambda_n^r k\cdot x)\bar{k}$ is a two-dimensional Mikado flow and the exponent $r$ is dependent on $p$. Inspired by~ \cite{CP}, we introduce the Mikado flow together with the smooth cutoff function $\chi$ so that the support of $\wpp_n$ shrinks monotonically as the iteration proceeds, which enables us to achieve a limiting solution belonging to $L^2_t L^p$ ($p<\infty$). The 2D Mikado flows inevitably intersect geometrically, thereby producing a delicate error from the quadratic form $\Div(\wpp_n\otimes \wpp_n)$. To overcome this, we introduce a highly oscillatory {flow} $e^{-\lambda_n^2 t}\sum_{k\in\Lambda} e^{\ii\lambda_n k\cdot x}\bar{k}$ into $\wpp_n$. This {flow} is designed to serve two crucial purposes simultaneously:
\vspace{-0.5ex}
\begin{itemize}
    \item {Absorption into pressure} -- the highly oscillatory {flow}  is an exact solution of the {steady two-dimensional Euler equations}, so that the oscillatory part of $\Div(\wpp_n\otimes\wpp_n)$ can be absorbed into the pressure term while keeping the residual error under control.
    \item Cancellation of the principal dissipative error -- the highly oscillatory {flow} is also a solution of the {free heat equation}, thereby eliminating the main error caused by the dissipative term, in contrast to previous convex integration iterations where $-\Delta u$ is handled  as an error term.
\end{itemize}
{The inverse cascade-dominated flow $\wss_n$} is given by $\wss_{n}=\wpp_{n-1}e^{-2\lambda^2_{n}t}$, as introduced by \cite{CP}. The construction of $\wss_{n}$ guarantees that the $\lambda_n$-level oscillatory error generated by the Mikado flows in the quadratic term $\Div(\wpp_n\otimes \wpp_n)$ is cancelled by $\partial_t \wss_n$, leaving only lower-order parts. 

The implicit part of the increment $(w_n, d_n)$ is the perturbation flow $(\wmhd_n, \dmhd_n)$. Our key insight is to fully exploit the exponential decay of the time factor $e^{-\lambda_n^2 t}$ in the construction of $\wpp_n$ and $\wss_n$. This allows us to expect that the $L^1_t C^1$ norm of $(u_{n-2},B_{n-2})$ grows only linearly in $n$ (i.e., $O(n)$). At the same time, the residual error decays doubly exponentially (e.g., $\lambda_{n-1}^{-30}$) in the $L^1_t B^{-1+\frac{2}{p}}_{p,1}$ space. The latter decays far faster than the former grows, which allows us to treat the residual as a forcing term. Consequently, by solving a forced MHD system with a suitably chosen small initial datum (see \eqref{e:wt}), we obtain a small corrector $(\wmhd_n, \dmhd_n)$. Finally, let $(w_n, d_n) = (\wpp_n + \wss_n + \wmhd_n, \dmhd_n)$ and we can construct $(u_n, B_n)$ to be an exact solution of the MHD equations.

\noindent\textbf{Organization of the paper.}  Section \ref{Pre} collects the necessary preliminaries and technical tools that will be used throughout the paper. In Section \ref{Induction}, we present an iterative framework and state the key iteration proposition (Proposition \ref{iteration}); we then show that Theorem \ref{t:main} follows directly from this proposition. Finally, the proof of Proposition \ref{iteration} is given in Section \ref{proof-1}.

\noindent\textbf{Notation.} In this paper, $\TTT^2=\R^2/(2\pi\ZZ)^2$. For a $\TTT^2$-periodic function $f$, we denote
\begin{align*}
\mathbb{P}_{=0} f:=\frac{1}{|\TTT^2|}\int_{\TTT^2} f(x)\dd x,\quad\PP f:=f-\mathbb{P}_{=0} f.
\end{align*}
In the following, the notation $a\lesssim b$ means $a\le Cb$ for a universal constant $C$ that may change from line to line.  {Without ambiguity, we will denote $L^r([0,T];Y(\TTT^2))$ and $L^r([0,\infty);Y(\TTT^2))$ by $L^r_T Y$ and $L^r_t Y$,  respectively.} For a vector funtion $f=(f_1, f_2)$, $\curl f=\partial_1 f_2-\partial_2 f_1$.

Let $\partial^{\sigma}$ be the space derivatives, we denote
\begin{align*}
\|f\|_{L^r_tC^N}:=\sum_{j=0}^N\max_{|\sigma|=j}\|\partial^\sigma f\|_{L^r_{t}L^\infty}.
\end{align*}
and 
\begin{align*}
\|(f_1,f_2,\cdots,f_m)\|_{X}:=\max\Big\{\|f_1\|_X, \|f_2\|_X, \cdots, \|f_m\|_X\Big\}.
\end{align*}

\section{Preliminaries}\label{Pre}
In this section we collect several useful tools: the definitions of mollifiers, a geometric lemma, Besov spaces and Chemin–Lerner spaces, as well as some estimates for transport‑diffusion equations and oscillating exponential factors.

\begin{definition}[Mollifiers]\label{e:defn-mollifier-t}
Let $\varphi(t)\in C^\infty_c(-1,0)$ and $\psi(x)\in C^\infty_c(B_1(0))$ be nonnegative standard mollifying kernels such that
\[
\int_{\R}\varphi(t)\,\dd t = \int_{\R^2}\psi(x)\,\dd x = 1.
\]
For each $\epsilon>0$ we define the families
\[
\varphi_{\epsilon}(t) \coloneq \frac1{\epsilon}\,\varphi\!\left(\frac t\epsilon\right),\qquad 
\psi_{\epsilon}(x) \coloneq \frac1{\epsilon^2}\,\psi\!\left(\frac{x}{\epsilon}\right).
\]
\end{definition}

\begin{lemma}[Stationary flows in 2D \cite{CDS}]\label{Betrimi}
Let $\mathbb{S}^1$ be the unit circle and $\mathbb{Q}$ the set of rational numbers. Suppose $\Lambda\subseteq\mathbb{S}^1\cap\mathbb{Q}^2$ satisfies $-\Lambda=\Lambda$. Then for any real numbers $b_k$ with $b_k=b_{-k}$, the vector field
\[
W(\xi)=\sum_{k\in\Lambda}b_k\frac{\mathrm{i}\,k^{\perp}}{|k|}\,e^{\mathrm{i}k\cdot\xi},\qquad k\perp k^{\perp},
\]
is real‑valued, divergence‑free and satisfies
\[
\Div_{\xi}(W\otimes W)= \frac{1}{2}\nabla_{\xi}\Bigl(|W|^2-\bigl|\sum_{k\in\Lambda}\tfrac{b_k}{|k|}e^{\mathrm{i}k\cdot\xi}\bigr|^2\Bigr)
\]
and
\begin{align}\label{b-g}
W\otimes W =& \sum_{\substack{j,k\in\Lambda\\ j+k\neq0}} -b_j b_k e^{\mathrm{i}(j+k)\cdot\xi}\frac{j^{\perp}}{|j|}\otimes \frac{k^{\perp}}{|k|}
   +\sum_{\substack{j,k\in\Lambda\\ j+k=0}} b_k^2\frac{k^{\perp}}{|k|}\otimes \frac{k^{\perp}}{|k|} \notag\\
=& \sum_{\substack{j,k\in\Lambda\\ j+k\neq0}} -\frac{b_k}{|k|}\frac{b_j}{|j|}\, e^{\mathrm{i}(j+k)\cdot\xi}\,j^{\perp}\otimes k^{\perp}
   +\sum_{\substack{j,k\in\Lambda\\ j+k=0}} \Bigl(\frac{b_k}{|k|}\Bigr)^2 k^{\perp}\otimes k^{\perp}.
\end{align}
\end{lemma}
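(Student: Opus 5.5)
The plan is to verify Lemma \ref{Betrimi} by direct computation on the Fourier expansion, followed by a vector identity special to two dimensions.

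\textbf{Real-valuedness and divergence.} Since $-\Lambda=\Lambda$, pair the terms $k$ and $-k$. Because $(-k)^\perp=-k^\perp$ and $b_{-k}=b_k$, the $-k$ term is the complex conjugate of the $k$ term:
\[
\overline{b_k\,\ii k^\perp e^{\ii k\cdot\xi}}=b_k(-\ii)k^\perp e^{-\ii k\cdot\xi}=b_{-k}\,\ii(-k)^\perp e^{\ii(-k)\cdot\xi}.
\]
Hence $W$ is real. Each mode has divergence $b_k\,\ii k^\perp\cdot \ii k\,e^{\ii k\cdot\xi}/|k|=0$, so $\Div_\xi W=0$.

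\textbf{The expansion \eqref{b-g}.} Multiply the two sums and collect terms:
\[
W\otimes W=\sum_{j,k}b_jb_k\,\ii^2\,\frac{j^\perp}{|j|}\otimes\frac{k^\perp}{|k|}\,e^{\ii(j+k)\cdot\xi}.
\]
Then split into the cases $j+k\neq0$ and $j=-k$. In the second case the identity $(-k)^\perp\otimes k^\perp=-k^\perp\otimes k^\perp$ flips the sign. Together with $b_{-k}=b_k$ this gives the diagonal term $b_k^2\,k^\perp\otimes k^\perp/|k|^2$. Moving the factors $|k|,|j|$ onto the coefficients gives the second line.

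\textbf{The divergence identity.} Set $\psi=\sum_k \frac{b_k}{|k|}e^{\ii k\cdot\xi}$, which is real by the same pairing argument.

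1. Compute $\nabla^\perp\psi$. With the convention $k^\perp=(-k_2,k_1)$ and $\nabla^\perp=(-\partial_2,\partial_1)$, we get $\nabla^\perp\psi=\sum_k \frac{b_k}{|k|}\ii k^\perp e^{\ii k\cdot\xi}=W$. So $W=\nabla^\perp\psi$.

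2. Compute $\Delta\psi$. We have $\Delta\psi=-\sum_k b_k|k|e^{\ii k\cdot\xi}$. Here I use that $\Lambda\subset\mathbb S^1$, so $|k|=1$ and therefore $\Delta\psi=-\psi$. This is the key point: all frequencies have the same modulus.

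3. Use the 2D identity for divergence-free fields, $\Div(W\otimes W)=(W\cdot\nabla)W=\nabla\frac{|W|^2}{2}+\omega W^\perp$, where $\omega=\curl W=\Delta\psi=-\psi$.

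4. Since $W^\perp=-\nabla\psi$, the last term is $\omega W^\perp=(-\psi)(-\nabla\psi)=\psi\nabla\psi=\nabla\frac{\psi^2}{2}$.

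5. Combining these gives $\Div(W\otimes W)=\nabla\frac{|W|^2+\psi^2}{2}$. This differs from the displayed formula, which has $-|\psi|^2$ in place of $+\psi^2$.

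\textbf{Main obstacle: the sign in the stated formula.} With $\Delta\psi=-\psi$, the relation $(\nabla^\perp\psi\cdot\nabla)\nabla^\perp\psi=\frac12\nabla(|\nabla\psi|^2+\psi^2)$ is forced, up to a choice of sign convention for $\omega W^\perp$. The $-$ sign in the statement would only follow from a different convention for $\perp$ or for $\curl$, so I would recheck this carefully. The sign does not matter for later use, since in either case $\Div(W\otimes W)$ is a pure gradient and is absorbed into the pressure.
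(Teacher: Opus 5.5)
The paper gives no proof of this lemma; it is quoted from \cite{CDS}. Your verification is the standard one: pair $\pm k$ for reality, expand the product for \eqref{b-g}, write $W=\nabla^{\perp}\psi$ with $\Delta\psi=-\psi$, and use $(W\cdot\nabla)W=\nabla\frac{|W|^2}{2}+\omega W^{\perp}$. The signs in your steps 1--5 are right, so your conclusion $\Div_{\xi}(W\otimes W)=\frac12\nabla_{\xi}(|W|^2+\psi^2)$ is correct.

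What you should change is the hedge in your last paragraph. The minus sign in the displayed identity is not a matter of convention; it is false. Reversing the orientation of $\perp$ replaces $W$ by $-W$ and leaves $W\otimes W$, $|W|^2$ and $\psi=\sum_{k}\frac{b_k}{|k|}e^{\mathrm{i}k\cdot\xi}$ unchanged. The convention for $\curl$ only enters intermediate steps.

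A one-line counterexample settles it. Take $\Lambda=\{\pm e_1\}$ and $b_{\pm e_1}=1$. Then $W=-2\sin\xi_1\,e_2$ is a shear flow with $\Div_{\xi}(W\otimes W)=0$, and $\psi=2\cos\xi_1$. The stated formula gives $\frac12\nabla_{\xi}(|W|^2-\psi^2)=(4\sin 2\xi_1,0)\neq0$, while the plus sign gives $\frac12\nabla_{\xi}(4)=0$.

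Your remark that the sign is harmless downstream is also right. In fact the paper's computation in Proposition \ref{prop-E} already uses the correct sign. For unit $k,k'$ with $k+k'\neq0$,
\[
\Div\bigl(e^{\mathrm{i}(k+k')\cdot x}({k'}^{\perp}\otimes k^{\perp}+k^{\perp}\otimes {k'}^{\perp})\bigr)=\nabla\bigl((k^{\perp}\cdot {k'}^{\perp}-1)e^{\mathrm{i}(k+k')\cdot x}\bigr).
\]
This is the frequency-pair form of the plus-sign identity, and it produces exactly the coefficient appearing in $P^{(1)}_{2q}$.

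One hypothesis should be made explicit. Your choice $k^{\perp}=(-k_2,k_1)$ for every $k$, a single fixed rotation, is genuinely needed and not just notation. Suppose one only required $(-k)^{\perp}=-k^{\perp}$ but oriented different pairs $\pm k$ differently. Then $W$ is still real and \eqref{b-g} still holds, but $W=\nabla^{\perp}\tilde\psi$ for some $\tilde\psi\neq\psi$, and the identity written in terms of $\psi$ fails. An example is $\Lambda=\{\pm e_1,\pm e_2\}$, all $b_k=1$, $e_1^{\perp}=e_2$, $e_2^{\perp}=e_1$.
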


\begin{lemma}[Geometric lemma \cite{CDS}]\label{first S}
Let $B_{\sigma}(0)$ denote the ball of radius $\sigma$ centered at $\mathrm{Id}$ in the space of $2\times2$ symmetric matrices, and take $\sigma=10^{-3}$.
There exist a set $\Lambda\subseteq \mathbb{S}^1\cap\mathbb{Q}^2$, consisting of vectors $k$ with associated orthonormal basis $(k,k^{\perp})$, and smooth functions $a_{k}:B_{\sigma}(\mathrm{Id})\rightarrow\mathbb{R}$ such that for every $R\in B_{\sigma}(\mathrm{Id})$,
\[
R = \sum_{k\in\Lambda} a_{k}^2(R)\; k^{\perp}\otimes k^{\perp}.
\]
\end{lemma}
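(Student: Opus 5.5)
The plan is to reduce the statement to finite-dimensional linear algebra, in the spirit of \cite{CDS}. Take $a_k$ to be square roots of coefficients $\gamma_k(R)$ that depend affinely on $R$. The point is to choose finitely many rational unit vectors $k$ so that two things hold: the rank-one matrices $k^{\perp}\otimes k^{\perp}$ span the $3$-dimensional space $\mathcal{S}$ of symmetric $2\times 2$ matrices, and $\mathrm{Id}$ is a combination of them with \emph{strictly positive} coefficients.

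\textbf{Choice of $\Lambda$.} I would take
\[
\Lambda_0=\Bigl\{(1,0),\ (0,1),\ \bigl(\tfrac35,\tfrac45\bigr),\ \bigl(\tfrac35,-\tfrac45\bigr)\Bigr\}\subseteq\mathbb{S}^1\cap\mathbb{Q}^2 ,
\]
and $\Lambda=\Lambda_0\cup(-\Lambda_0)$, which gives $-\Lambda=\Lambda$ as required by Lemma \ref{Betrimi}. The corresponding perpendicular vectors are $e_2$, $-e_1$, $\tfrac15(-4,3)$ and $\tfrac15(4,3)$. Writing $v_3,v_4$ for the tensor squares of the last two, one has
\[
v_3+v_4=\begin{pmatrix}\tfrac{32}{25}&0\\0&\tfrac{18}{25}\end{pmatrix},\qquad
v_4-v_3=\begin{pmatrix}0&\tfrac{24}{25}\\ \tfrac{24}{25}&0\end{pmatrix}.
\]

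\textbf{Representation of a general $R$.} For $R=\begin{pmatrix}R_{11}&R_{12}\\R_{12}&R_{22}\end{pmatrix}\in\mathcal{S}$, the display gives
\[
R=\Bigl(R_{11}-\tfrac{16}{25}\Bigr)e_1\otimes e_1+\Bigl(R_{22}-\tfrac{9}{25}\Bigr)e_2\otimes e_2+\Bigl(\tfrac12-\tfrac{25}{48}R_{12}\Bigr)v_3+\Bigl(\tfrac12+\tfrac{25}{48}R_{12}\Bigr)v_4 .
\]
This is easily checked by comparing the $(1,1)$, $(2,2)$ and $(1,2)$ entries. It defines affine functions $\gamma_k(R)$ for $k\in\Lambda_0$. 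At $R=\mathrm{Id}$ they take the values $\tfrac{9}{25},\tfrac{16}{25},\tfrac12,\tfrac12$, all at least $\tfrac{9}{25}$.

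\textbf{Positivity and definition of $a_k$.} For $|R-\mathrm{Id}|<\sigma=10^{-3}$, each $\gamma_k$ changes by at most a constant (about $1$) times $\sigma$. Hence $\gamma_k(R)\ge \tfrac{9}{25}-2\cdot10^{-3}>0$ on $B_\sigma(\mathrm{Id})$. Since $(-k)^{\perp}\otimes(-k)^{\perp}=k^{\perp}\otimes k^{\perp}$, I split each coefficient evenly between $k$ and $-k$ and set
\[
a_{\pm k}(R)=\sqrt{\tfrac12\gamma_k(R)},\qquad k\in\Lambda_0 .
\]
This is smooth because $\gamma_k$ is affine and bounded away from zero, and it gives $a_k=a_{-k}$. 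Then $\sum_{k\in\Lambda}a_k^2(R)\,k^{\perp}\otimes k^{\perp}=R$, as claimed.

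\textbf{Main obstacle.} The only genuine obstacle is choosing the directions. With three directions such as $e_1$, $e_2$, $(\tfrac35,\tfrac45)$, the off-diagonal entry forces the third coefficient of $\mathrm{Id}$ to vanish. Positivity then fails, and no square root is available near $\mathrm{Id}$. The symmetric pair $(\tfrac35,\pm\tfrac45)$, coming from the Pythagorean triple $3,4,5$, resolves this. It keeps the vectors rational and unit, lets the off-diagonal part be carried by $v_4-v_3$, and leaves the coefficients of $\mathrm{Id}$ strictly positive.
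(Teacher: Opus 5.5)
Your approach is the standard one behind \cite{CDS}; the paper cites that lemma without proof. The idea is to express $R$ with affine coefficients in a family of rank-one matrices $k^{\perp}\otimes k^{\perp}$ that spans the symmetric matrices and in which $\mathrm{Id}$ has strictly positive coefficients, and then take square roots near $\mathrm{Id}$. Your choice of $\Lambda=\Lambda_0\cup(-\Lambda_0)$ and the even splitting $a_{k}=a_{-k}$ are fine.

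There is one arithmetic slip in the displayed representation of $R$. Since $(v_4-v_3)_{12}=\tfrac{24}{25}$, the coefficients of $v_3,v_4$ must be $\tfrac12\mp\tfrac{25}{24}R_{12}$, not $\tfrac12\mp\tfrac{25}{48}R_{12}$. As written, the $(1,2)$ entry of the right-hand side equals $\tfrac12 R_{12}$, so the entrywise check you claim fails.

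With the corrected coefficient, the positivity bound still holds: $\tfrac12-\tfrac{25}{24}\cdot10^{-3}>\tfrac{9}{25}-2\cdot10^{-3}>0$. So smoothness of $a_k$ and the identity $R=\sum_{k\in\Lambda}a_k^2(R)\,k^{\perp}\otimes k^{\perp}$ go through unchanged.
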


We now recall the definitions of Besov spaces and the mixed time‑space Besov spaces (the so‑called Chemin–Lerner spaces).

\begin{definition}[Besov spaces \cite{BCD11, MZZ}]\label{def:besov}
Let $s\in\mathbb{R}$ and $1\le p,q\le\infty$.
The homogeneous Besov space $\dot B^s_{p,q}(\R^2)$ consists of all $u\in\mathcal{S}'_h(\R^2)$ such that
\[
\|u\|_{\dot B^s_{p,q}(\R^2)} \overset{\text{def}}{=}
\Bigl\|\bigl(2^{js}\|\dot\Delta_j u\|_{L^p(\R^2)}\bigr)_{j\in\ZZ}\Bigr\|_{\ell^q(\ZZ)}<\infty
\]
and  the nonhomogeneous Besov space $B^s_{p,q}(\R^2)$ consists of all $u\in\mathcal{S}'(\R^2)$ such that
\[
\|u\|_{B^s_{p,q}(\R^2)} \overset{\text{def}}{=}
\Bigl\|\bigl(2^{js}\|\Delta_j u\|_{L^p(\R^2)}\bigr)_{j\in\ZZ}\Bigr\|_{\ell^q(\ZZ)}<\infty.
\]
\end{definition}

\begin{definition}[Chemin–Lerner spaces \cite{BCD11, MZZ} ]\label{def:cheminlerner}
Let $T>0$, $s\in\R$ and $1\le r,p,q\le\infty$. 
The homogeneous mixed time‑space Besov space $\widetilde{L}^r_T\dot B^s_{p,q}(\R^2)$ consists of all $u\in\mathcal{S}'_h(\R^2)$ such that
\[
\|u\|_{\widetilde{L}^r_T\dot B^s_{p,q}(\R^2)} \overset{\text{def}}{=}
\Bigl\|\bigl(2^{js}\|\dot\Delta_j u\|_{L^r([0,T];L^p(\R^2))}\bigr)_{j\in\ZZ}\Bigr\|_{\ell^q(\ZZ)}<\infty.
\]
The non‑homogeneous version $\widetilde{L}^r_T B^s_{p,q}(\R^2)$ consists of all $u\in\mathcal{S}'(\R^2)$ satisfying
\[
\|u\|_{\widetilde{L}^r_T B^s_{p,q}(\R^2)} \overset{\text{def}}{=}
\Bigl\|\bigl(2^{js}\|\Delta_j u\|_{L^r([0,T];L^p(\R^2))}\bigr)_{j\in\ZZ}\Bigr\|_{\ell^q(\ZZ)}<\infty.
\]
\end{definition}

\begin{lemma}[\cite{BCD11}]\label{T-D}
Let $1\le p\le p_1\le\infty$ and
\[
-1-2\min\Bigl\{\frac1{p_1},\frac1{p'}\Bigr\} < s < 1+\frac{2}{p_1}.
\]
Consider the transport‑diffusion equation
\begin{align}\label{T-D-E}
\partial_t u - \Delta u + v\cdot\nabla u = g,\qquad u(0,x)=u_0(x),
\end{align}
where $v$ is a divergence‑free vector field. There exists a constant $C$, depending only on $s$ and $p$, such that for any smooth solution $u$ of \eqref{T-D-E}, any $1\le \rho_1,q\le\infty$ and $\rho\in[\rho_1,\infty]$,
\[
\|u\|_{\widetilde{L}_T^{\rho}\dot B^{s+\frac{2}{\rho}}_{p,q}(\R^2)}
\le C\,e^{C V_{p_1}(T)}\Bigl( \|u_0\|_{\dot B_{p,q}^s(\R^2)}
+ \|g\|_{\widetilde{L}_T^{\rho_1}(\dot B_{p,q}^{s-2+\frac{2}{\rho_1}}(\R^2))}\Bigr),
\]
where
\[
V_{p_1}(T)=\int_0^T \bigl\|\nabla v(s)\bigr\|_{\dot B^{\frac{2}{p_1}}_{p_1,\infty}\cap L^\infty}\,\dd s.
\]
\end{lemma}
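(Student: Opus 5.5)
This lemma is the classical a priori estimate for transport–diffusion equations in Chemin–Lerner spaces. I would prove it by Littlewood–Paley localization, a commutator estimate, and Gronwall.

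\textbf{Step 1: localized equation.} Apply $\dot\Delta_j$ to \eqref{T-D-E}. With $u_j:=\dot\Delta_j u$, this gives
\[
\partial_t u_j-\Delta u_j+v\cdot\nabla u_j=\dot\Delta_j g+R_j,\qquad R_j:=[v\cdot\nabla,\dot\Delta_j]u .
\]

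\textbf{Step 2: $L^p$ energy estimate with dissipation.} Multiply by $|u_j|^{p-2}u_j$ and integrate. Since $\Div v=0$, the transport term drops out. Because $u_j$ is frequency-localized to $|\xi|\sim 2^j$, the generalized Bernstein inequality gives
\[
-\int \Delta u_j\,|u_j|^{p-2}u_j\ge c\,2^{2j}\|u_j\|_{L^p}^p .
\]
This holds for all $1\le p\le\infty$ after the usual limiting arguments. It yields
\[
\|u_j(t)\|_{L^p}\le e^{-c2^{2j}t}\|u_j(0)\|_{L^p}+\int_0^t e^{-c2^{2j}(t-\tau)}\bigl(\|\dot\Delta_j g\|_{L^p}+\|R_j\|_{L^p}\bigr)\dd\tau .
\]
Take the $L^\rho_t$ norm and apply Young's inequality in time. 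The kernel satisfies $\|e^{-c2^{2j}\cdot}\|_{L^{m}}\lesssim 2^{-2j/m}$ with $1+\frac1\rho=\frac1m+\frac1{\rho_1}$. This produces the gain of $2^{2j(1/\rho_1-1/\rho)}$ in regularity, which is exactly the shift from index $s-2+\frac2{\rho_1}$ to $s+\frac2\rho$.

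\textbf{Step 3: commutator estimate (main obstacle).} I would use the standard Bony decomposition of $R_j$ into paraproduct and remainder pieces, as in \cite{BCD11}. The claim to verify is
\[
\Bigl\|\bigl(2^{js}\|R_j(t)\|_{L^p}\bigr)_j\Bigr\|_{\ell^q}\lesssim \|\nabla v(t)\|_{\dot B^{2/p_1}_{p_1,\infty}\cap L^\infty}\,\|u(t)\|_{\dot B^s_{p,q}} ,
\]
valid precisely in the range $-1-2\min\{\frac1{p_1},\frac1{p'}\}<s<1+\frac2{p_1}$.
\begin{itemize}
\item The low–high term $[T_{v}\cdot\nabla,\dot\Delta_j]u$ is handled by the mean value theorem and needs only $\|\nabla v\|_{L^\infty}$.
\item The high–low term $T_{\nabla u}v$ requires the upper bound $s<1+2/p_1$.
\item The remainder term $R(v,\nabla u)$ requires the lower bound on $s$. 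For it, write it as $\Div R(v,u)$ and use the embedding $L^{p_1}\cdot L^p\subset L^{r}$ with $\frac1r=\frac1p+\frac1{p_1}$, followed by Bernstein back to $L^p$.
\end{itemize}
The delicate bookkeeping is in the exponents of this last piece, which is why the $\min$ with $1/p'$ appears. I would first check the estimate for the remainder.

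\textbf{Step 4: closing the estimate.} Insert the commutator bound from Step 3 into Step 2 using the $\widetilde L$ structure. Summing in $\ell^q$ for the $\rho=\infty$ version gives
\[
\|u\|_{\widetilde L^\infty_t\dot B^s_{p,q}}\le \|u_0\|_{\dot B^s_{p,q}}+\|g\|_{\widetilde L^{\rho_1}_t\dot B^{s-2+2/\rho_1}_{p,q}}+C\int_0^t V_{p_1}'(\tau)\,\|u\|_{\widetilde L^\infty_\tau\dot B^s_{p,q}}\dd\tau ,
\]
where the commutator is treated as a forcing in $L^1_t\dot B^s_{p,q}$. Gronwall's inequality gives the bound with factor $e^{CV_{p_1}(T)}$. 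Reinserting this into the Step 2 estimate for general $\rho\in[\rho_1,\infty]$ then gives the stated inequality for every such $\rho$.
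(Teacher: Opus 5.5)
\textbf{The endpoint cases $p=1$ and $p=p_1=\infty$ are not covered.} For $1<p<\infty$ your outline is the standard energy proof and it does give the stated inequality, with $C$ depending on $p$. The equation decouples into scalar equations for the components of $u$, and for spectrally localized $u_j$ one has the Bernstein-type lower bound $-\int\Delta u_j\,|u_j|^{p-2}u_j\,\dd x\ge c_p2^{2j}\|u_j\|_{L^p}^p$. The commutator estimate of \cite{BCD11}, Young's inequality in time and your Gronwall step are then all fine.

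The gap is the sentence that Step 2 holds for all $1\le p\le\infty$ ``after the usual limiting arguments''. That Bernstein-type bound comes from the identity $-\int\Delta f\,|f|^{p-2}f=(p-1)\int|\nabla f|^2|f|^{p-2}$. It is proved only for $1<p<\infty$, and its constant $c_p$ degenerates at both endpoints. Letting $p\to1$ or $p\to\infty$ therefore throws away the dissipative gain. At $p=1$ the energy method only yields Kato's inequality $\int\Delta u_j\,\operatorname{sgn}(u_j)\,\dd x\le0$, with no factor $2^{2j}$.

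At $p=\infty$ no uniform version can hold at all. Take $f(x)=9\cos(\lambda x_1)-\cos(3\lambda x_1)=4c(3-c^2)$ with $c=\cos(\lambda x_1)$. Its Fourier support lies in the annulus $\{\lambda\le|\xi|\le3\lambda\}$, and $|f|$ attains its maximum $8$ only where $c=\pm1$, where $\Delta f=0$. Consequently
\[
\|e^{t\Delta}f\|_{L^\infty}\ge9e^{-\lambda^2t}-e^{-9\lambda^2t}=8-O(\lambda^4t^2).
\]
Already for $v=0$ this contradicts any inequality of the form $\frac{d}{dt}\|u_j\|_{L^\infty}+c\,2^{2j}\|u_j\|_{L^\infty}\le\|\dot\Delta_jg+R_j\|_{L^\infty}$.

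At the endpoints only the semigroup bound $\|e^{t\Delta}f\|_{L^p}\le Ce^{-ct\lambda^2}\|f\|_{L^p}$ survives, and only with $C>1$. You cannot combine it with the transport term through Duhamel's formula, because $\|v\cdot\nabla u_j\|_{L^p}$ is not controlled by $\|\nabla v\|_{L^\infty}\|u_j\|_{L^p}$; only $V_{p_1}$ is allowed to enter.

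This matters here: the statement explicitly allows $p=1$ and $p=p_1=\infty$, and the paper relies on the $p=\infty$ case, for instance for the $\widetilde L^\infty_t\dot B^0_{\infty,1}\cap L^1_t\dot B^2_{\infty,1}$ bound \eqref{wd-B0}. These cases need a different mechanism. The one used to reach the full range in the transport--diffusion theory of \cite{BCD11} is Lagrangian:
\begin{itemize}
\item on each dyadic block, compose with the measure-preserving flow $\psi$ of a low-frequency truncation $\dot S_{j-N}v$;
\item work on time intervals where $\int\|\nabla v\|_{L^\infty}\,\dd t$ is small, so the transport term disappears and $\Delta(u_j\circ\psi)-(\Delta u_j)\circ\psi$ is small compared with $2^{2j}\|u_j\|_{L^p}$;
\item apply the semigroup bound with $C>1$ on each interval and chain the intervals.
\end{itemize}
As written, your argument proves the lemma only for $1<p<\infty$.
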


\begin{proposition}\label{Est-fe}
For $k\in\mathbb{Q}^2\setminus\{0\}$ and $\lambda>0$,
\[
\|f e^{\mathrm{i}\lambda k\cdot x}\|_{L^\infty_t \dot B^{-1}_{\infty,\infty}}
\lesssim_{|k|}\; \lambda^{-1}\|f\|_{L^\infty_tL^\infty}
+ \lambda^{-2}\|f\|_{L^\infty_tC^1}
+ \lambda^{-3}\|f\|_{L^\infty_tC^2}.
\]
\end{proposition}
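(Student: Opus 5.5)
The plan is to exhibit $g:=f e^{\ii\lambda k\cdot x}$ (with $f$ evaluated at a fixed time, then take the supremum in $t$) as the divergence of a bounded vector field plus a small remainder. The homogeneous norm $\dot B^{-1}_{\infty,\infty}$ is exactly the setting where divergences cost nothing: since $\|\dot\Delta_j\,\Div G\|_{L^\infty}\lesssim 2^j\|\dot\Delta_j G\|_{L^\infty}$ uniformly in $j\in\ZZ$, one has $\|\Div G\|_{\dot B^{-1}_{\infty,\infty}}\lesssim\|G\|_{\dot B^0_{\infty,\infty}}\lesssim\|G\|_{L^\infty}$. A bare $L^\infty$ function is \emph{not} controlled in $\dot B^{-1}_{\infty,\infty}$, because the low frequencies $j\to-\infty$ are weighted by $2^{-j}$. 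So every term must end up in divergence form, and the phase $e:=e^{\ii\lambda k\cdot x}$ is what supplies the gain. I would write the argument for a fixed $t$ and then take $\sup_t$; the constants depend only on $|k|$.

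\textbf{Step 1 (first integration by parts).} Set $A_1:=-\frac{\ii k}{\lambda|k|^2}f$. Then
\[
\Div(eA_1)=e\bigl(\ii\lambda k\cdot A_1+\Div A_1\bigr)=fe-\frac{\ii}{\lambda|k|^2}(k\cdot\nabla f)\,e .
\]
Hence $g=\Div(eA_1)+\frac{\ii}{\lambda|k|^2}(k\cdot\nabla f)e$, with $\|eA_1\|_{L^\infty}\lesssim\lambda^{-1}\|f\|_{L^\infty}$.

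\textbf{Step 2 (second integration by parts).} I apply the same device to $h_1:=\frac{\ii}{\lambda|k|^2}k\cdot\nabla f$, using $A_2:=-\frac{\ii k}{\lambda|k|^2}h_1$. This gives $\|eA_2\|_{L^\infty}\lesssim\lambda^{-2}\|f\|_{C^1}$ and leaves the remainder
\[
R:=c_k\,\lambda^{-2}\bigl((k\cdot\nabla)^2 f\bigr)e,\qquad \|(k\cdot\nabla)^2f\|_{L^\infty}\lesssim\|f\|_{C^2}.
\]
Therefore $g=\Div\bigl(e(A_1+A_2)\bigr)+R$, and the divergence part is bounded by $\lambda^{-1}\|f\|_{L^\infty}+\lambda^{-2}\|f\|_{C^1}$, which accounts for the first two terms of the estimate.

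\textbf{Step 3 (the remainder: main obstacle).} I still need $\|R\|_{\dot B^{-1}_{\infty,\infty}}\lesssim\lambda^{-3}\|f\|_{C^2}$, using no derivatives of $f$ beyond second order. One more integration by parts would give $R=\Div(eA_3)+O(\lambda^{-3}\|f\|_{C^3})e$, which costs a third derivative, so this route is closed. Instead I would write $R=\Div\nabla\Delta^{-1}R$ and show directly that $\|\nabla\Delta^{-1}(\tilde h e)\|_{L^\infty}\lesssim\lambda^{-1}\|\tilde h\|_{L^\infty}$ for $\tilde h=(k\cdot\nabla)^2f$; this would give the remainder bound $\lambda^{-2}\cdot\lambda^{-1}\|f\|_{C^2}$.

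To prove this inequality I would split the Littlewood--Paley blocks. For $2^j\ge c\lambda$, use $2^{-j}\|\dot\Delta_j(\tilde he)\|_{L^\infty}\lesssim\lambda^{-1}\|\tilde h\|_{L^\infty}$ directly. For $2^j<c\lambda$, write $\dot\Delta_j(\tilde he)$ as a convolution of $\tilde h$ against the modulated kernel $\check\varphi_j(y)e^{\ii\lambda k\cdot y}$, whose frequency support avoids the origin at scale $\lambda$. Integrating by parts in $y$ against $e^{\ii\lambda k\cdot y}$ then moves derivatives onto the kernel and onto $\tilde h$ at a gain of $\lambda^{-1}$ each. This is the delicate point: the argument must be closed by absorbing at most two derivatives on $f$ in total, so I would organize the computation around the identity $e=-(\lambda|k|)^{-2}\Delta e$ applied inside the convolution. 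The expected outcome is $2^{-j}\|\dot\Delta_j R\|_{L^\infty}\lesssim\lambda^{-3}\|f\|_{C^2}$ uniformly in $j$. Together with Steps 1--2 and the divergence bound, this yields the claimed estimate.
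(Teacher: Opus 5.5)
Your Steps 1--2 are the same two integrations by parts the paper performs, and you correctly identify the real issue: the remainder $R=c_k\lambda^{-2}((k\cdot\nabla)^2f)e$ is not in divergence form. \textbf{Step 3, however, aims at an inequality that is false.}

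The bound $\|\nabla\Delta^{-1}(\tilde h e)\|_{L^\infty}\lesssim\lambda^{-1}\|\tilde h\|_{L^\infty}$ fails for $\tilde h=e^{-\ii\lambda k\cdot x}\psi$, where $\psi\in C^\infty_c(\R^2)$ is fixed and not identically zero. Then $\tilde h e=\psi$, so the left-hand side does not depend on $\lambda$.

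The modulated-kernel argument cannot repair this. For $2^j\ll\lambda$, the block $\dot\Delta_j(\tilde he)$ sees $\widehat{\tilde h}$ near $-\lambda k$. That is small only if $\tilde h$ is smooth on the scale $\lambda^{-1}$. Writing $e=-(\lambda|k|)^{-2}\Delta e$ and integrating by parts puts derivatives on $\tilde h$ through the product rule. That is exactly the third derivative of $f$ you wanted to avoid.

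In fact the proposition is false as stated. Take $f=e^{-\ii\lambda k\cdot x}\psi$, or $f=\cos(\lambda k\cdot x)\psi$ if you want $f$ real. Then $\|fe^{\ii\lambda k\cdot x}\|_{\dot B^{-1}_{\infty,\infty}}\ge\frac12\|\psi\|_{\dot B^{-1}_{\infty,\infty}}-O(\lambda^{-1})$, uniformly in $\lambda\ge1$. Meanwhile $\lambda^{-1}\|f\|_{L^\infty}+\lambda^{-2}\|f\|_{C^1}+\lambda^{-3}\|f\|_{C^2}=O(\lambda^{-1})$.

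The paper's proof has the same hole. Its final line silently uses $\|(\nabla^2 f)e^{\ii\lambda k\cdot x}\|_{\dot B^{-1}_{\infty,\infty}}\lesssim\lambda^{-1}\|f\|_{C^2}$, and the same example refutes that.

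What does hold requires compact support and one more derivative. In the applications the functions $f$ contain the cutoff $\eta_{2q-1}$, so they are supported in a fixed compact set $K\subseteq[-\pi,\pi]^2$.

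If $g\in L^\infty$ is supported in $K$, then $\|\dot\Delta_j g\|_{L^\infty}\lesssim\min\{2^{2j}|K|,1\}\|g\|_{L^\infty}$, and hence $\|g\|_{\dot B^{-1}_{\infty,\infty}}\lesssim|K|^{1/2}\|g\|_{L^\infty}$. Integrate by parts a third time in your scheme, and bound the final non-divergence remainder $c\,\lambda^{-3}((k\cdot\nabla)^3f)e^{\ii\lambda k\cdot x}$ in this way. This gives
\[
\|fe^{\ii\lambda k\cdot x}\|_{L^\infty_t\dot B^{-1}_{\infty,\infty}}\lesssim_{K,|k|}\lambda^{-1}\|f\|_{L^\infty_tL^\infty}+\lambda^{-2}\|f\|_{L^\infty_tC^1}+\lambda^{-3}\|f\|_{L^\infty_tC^3}.
\]
This is the form the paper actually invokes later: a $C^3$ norm appears in \eqref{E1-B-1}. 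If you stop after two steps instead, the last term becomes $\lambda^{-2}\|f\|_{L^\infty_tC^2}$.
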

\begin{proof}
Since $e^{\mathrm{i}\lambda k\cdot x} = \frac{\Div(e^{\mathrm{i}\lambda k\cdot x}k)}{\mathrm{i}\lambda |k|}$, we have
\begin{align*}
f e^{\mathrm{i}\lambda k\cdot x}
&= \frac{1}{\mathrm{i}\lambda |k|}\Div\bigl(f e^{\mathrm{i}\lambda k\cdot x}k\bigr)
   - \frac{1}{\mathrm{i}\lambda |k|} e^{\mathrm{i}\lambda k\cdot x}\,k\cdot\nabla f \\
&= \frac{1}{\mathrm{i}\lambda |k|}\Div\bigl(f e^{\mathrm{i}\lambda k\cdot x}k\bigr)
   - \frac{1}{(\mathrm{i}\lambda |k|)^2}\Bigl(
        \Div\bigl((k\cdot\nabla f)e^{\mathrm{i}\lambda k\cdot x}k\bigr)
        - e^{\mathrm{i}\lambda k\cdot x}\,k\cdot\nabla(k\cdot\nabla f)\Bigr).
\end{align*}
Consequently, 
we conclude that\begin{align*}
\|f e^{\mathrm{i}\lambda k\cdot x}\|_{L^\infty_t\dot B^{-1}_{\infty,\infty}}
\lesssim&\; \lambda^{-1}\|f e^{\mathrm{i}\lambda k\cdot x}\|_{\dot B^0_{\infty,\infty}}
   +\lambda^{-2}\bigl(\|(\nabla f)e^{\mathrm{i}\lambda k\cdot x}\|_{\dot B^0_{\infty,\infty}}
   +\|(\nabla^2 f)e^{\mathrm{i}\lambda k\cdot x}\|_{\dot B^{-1}_{\infty,\infty}}\bigr)\\
\lesssim&\; \lambda^{-1}\|f\|_{L^\infty_{t,x}}
   +\lambda^{-2}\|f\|_{L^\infty_tC^1}
   +\lambda^{-3}\|f\|_{L^\infty_tC^2}
\end{align*}
and complete the proof of Proposition \ref{Est-fe}.
\end{proof}
\section{Induction scheme}\label{Induction}
Without loss of generality, we first aim to establish Theorem~\ref{t:main} for a fixed exponent \(2<p<\infty\). To this end, we construct a new iterative scheme.

\subsection{Parameters}\label{para}
 First of all, we introduce several parameters.  Given $f\in C^\infty_0(\mathbb{R}^2)$, $2<p<\infty$ and $T>0$, we first choose  \(r\in\QQ\) such that  
\[
0 < r < \frac{1}{10}\Bigl(1-\frac{2}{p}\Bigr).
\]
Then there exists a positive integer \(M\) with \(rM\in\mathbb{N}\).
Since the set $\Lambda\subseteq \mathbb{S}^1\cap\mathbb{Q}^2$ in Lemma~\ref{first S} is finite, we can find a positive integer $N_{\Lambda}$ such that  
\[
N_{\Lambda} k \in \mathbb{Z}^2,\qquad \forall k\in\Lambda.
\]
Now we introduce two large positive integers $a,b\in M\mathbb{N}$  that will be chosen later depending on $T$, $C_0$, $p$, and $\|f\|_{\dot B^{0}_{2,1}\cap \dot B^{1}_{2,1}\cap B^{-1}_{1,1}}$ and satisfy 
\[
b > 2^{10}\max\Bigl\{\bigl(1-\tfrac{2}{p}\bigr)^{-1},\; \tfrac{p}{2}\Bigr\},
\qquad
a > \|f\|_{\dot B^{0}_{2,1}\cap \dot B^{1}_{2,1}\cap B^{-1}_{1,1}}.
\]
For any $q\ge 0$,  we define
\begin{align}\label{def-ell}
\lambda_q := N_{\Lambda}^{M} a^{b^q}, \qquad 
\ell_q := \min\Bigl\{\lambda_q^{-\frac{50}{1-\frac{2}{p}}},\; \lambda_q^{-\frac{50}{\frac{2}{p}}}\Bigr\}.    
\end{align}

\subsection{Iterative procedure}\label{sec-ite}
We now construct the iteration scheme by induction. For each integer $j\ge 0$, set
\[
\mathcal{O}_j = \Bigl[-\tfrac{3}{4}\pi + 5\lambda_j^{-\frac{1}{50}},\; \tfrac{3}{4}\pi - 5\lambda_j^{-\frac{1}{50}}\Bigr]^2,
\]
and let $\eta_j \in C_c^\infty(\mathbb{R}^2)$ be a smooth cut-off function such that
\begin{align}\label{def-eta}
\spt\,\eta_j\subseteq\mathcal{O}_{j+1} ,
\qquad
\eta_j(x) \equiv 1 \;\; \text{for all } x \in \mathcal{O}_{j}.
\end{align}
Define $\varepsilon_0 = \lambda_1^{-\frac{1}{15}}$ and denote the orthogonal gradient by $\nabla^{\perp} = (\partial_2, -\partial_1)$.  
We introduce $(\wpp_1, \wss_1)$ as
\[
\wpp_1 = \varepsilon_0 \lambda_1^{-3} e^{-\lambda_1^2 t} \, \Delta \nabla^{\perp} \Bigl( \eta_0 \operatorname{curl}\bigl( \sin(\lambda_1 x_1) e_2 \bigr) \Bigr), \qquad
\wss_1 = 0.
\]

Let $(\wmhd_1, p^{(\mathrm{m})}_1, \dmhd_1)$ be the unique global solution of the forced MHD system
\begin{equation}\label{def-w1}
\left\{
\begin{aligned}
&\partial_t \wmhd_1 - \Delta \wmhd_1 + \wmhd_1\!\cdot\!\nabla \wmhd_1 
   + \wpp_1\!\cdot\!\nabla \wmhd_1 + \wmhd_1\!\cdot\!\nabla \wpp_1 + \nabla p^{(\mathrm{m})}_1 \\
&\hspace{2cm}= \dmhd_1\!\cdot\!\nabla \dmhd_1 
   - (\partial_t - \Delta)\wpp_1 - \Div(\wpp_1 \otimes \wpp_1), \\[4pt]
&\partial_t \dmhd_1 - \Delta \dmhd_1 + (\wmhd_1 + \wpp_1)\!\cdot\!\nabla \dmhd_1
   = \dmhd_1\!\cdot\!\nabla (\wmhd_1 + \wpp_1), \\[4pt]
&\nabla\!\cdot\!\wmhd_1 = \nabla\!\cdot\!\dmhd_1 = 0, \\[4pt]
&(\wmhd_1, \dmhd_1)\big|_{t=0} = \bigl(0,\; \lambda_1^{-50} f(x) \bigr).
\end{aligned}
\right.
\end{equation}
Owing to the specific choice of $\wpp_1$ and the smallness of $\varepsilon_0$,  the system \eqref{def-w1} admits a unique global solution $(\wmhd_1, p^{(\mathrm{m})}_1, \dmhd_1)$ satisfying the estimates
\begin{align}
\|(\wmhd_1, \dmhd_1)\|_{\widetilde L^{\infty}_t (\dot{B}^{-1+\frac{2}{p}}_{p,1} \cap \dot B^{-\frac{2}{p}}_{2,1})\cap L^{1}_t (\dot{B}^{1+\frac{2}{p}}_{p,1} \cap \dot B^{2-\frac{2}{p}}_{2,1})} 
   \lesssim{\varepsilon_0 }(\lambda^{-1+\frac{2}{p}}_1+\lambda^{-\frac{2}{p}}_1), \label{wmhd1}
\end{align}
and
\begin{align}
 \|(\wmhd_1, \dmhd_1)\|_{L^{\infty}_T B^{-1}_{1,1}  \cap L^{1}_T {B}^{1}_{1,1}} 
   \lesssim\varepsilon_0,\qquad 
 \|(\wmhd_1, \dmhd_1)\|_{\widetilde L^{\infty}_t \dot{B}^{0}_{\infty,1} \cap L^{1}_t \dot{B}^{2}_{\infty,1}} 
   \leq \lambda_1^{-\frac{1}{20}}.   \label{wmhd1-1}
\end{align}
Finally we define $(u_1, p_1, B_1)$ by
\[
(u_1, p_1, B_1) = \bigl( \wpp_1 + \wmhd_1,\; p^{(\mathrm{m})}_1,\; \dmhd_1 \bigr)
\]  
 that satisfies the MHD equations \eqref{MHD}.

Now assume that for some integer $n\ge 2$ and  each $1\le j\le n-1$, we have already constructed smooth solutions
$\{(u_j,B_j)\}_{1\le j\le n-1}\subseteq C^\infty_{t,x}(\mathbb{R}^{+}\times \mathbb{R}^2)$ of \eqref{MHD} on $\mathbb{R}^{+}\times\mathbb{R}^2$ satisfying
\begin{align}
& \|(u_j, B_j)\|_{L^2_TL^2}\le 2C^{3/4}_0\sum_{k=1}^j 2^{-\frac{j}{2}},\quad \|(u_j, B_j)\|_{L^\infty_t\dot B^0_{\infty,1}\cap L^1_t\dot B^2_{\infty,1}}\le C_0\lambda_j,\label{u-Linfty} \\[4pt]
& \|(u_j,B_j)\|_{L^1_t\dot B^{1}_{\infty,1}\cap L^2_t \dot B^0_{\infty,1}} \le
    \begin{cases}
        C_0 q,      & \text{if } j = 2q, \\[2pt]
        C_0 (q+1), & \text{if } j = 2q+1,
    \end{cases} \label{u-L1C1} \\[8pt]
& u_j =
    \begin{cases}
        \displaystyle  \sum_{l=1}^{q} w_{2l}
        =  \sum_{l=1}^{q} \bigl( \wpp_{2l} + \wss_{2l} + \wmhd_{2l} \bigr),
        & \text{if } j = 2q, \\[10pt]
        \displaystyle  \sum_{l=0}^{q} w_{2l+1}
        =  \sum_{l=0}^{q} \bigl( \wpp_{2l+1} + \wss_{2l+1} + \wmhd_{2l+1} \bigr),
        & \text{if } j = 2q+1,
    \end{cases} \label{u-decomposition}
    \end{align}
and
    \begin{align}
& B_j =
    \begin{cases}
        \displaystyle  \sum_{l=1}^{q} \dmhd_{2l},
        & \text{if } j = 2q, \\[10pt]
        \displaystyle   \sum_{l=0}^{q} \dmhd_{2l+1} ,
        & \text{if } j = 2q+1.
    \end{cases} \label{b-decomposition}
\end{align}
Set $\mathcal R := (\nabla+\nabla^{\!\top})(-\Delta)^{-1}$. Then $\wpp_j$ and $\wss_j$ can be rewritten as
\begin{align}\label{def-R}
 \wpp_j = \Div(\mathcal R \wpp_j),\qquad \wss_j = \Div(\mathcal R \wss_j)
\end{align}
and satisfy
\begin{align}
&\|\partial_t^{\,l}\, \mathcal R \wpp_j\|_{L^\infty _tC^{N}}
    \le C^{3/4}_0 \lambda_j^{N+2l}, \qquad 
\|\mathcal R\wss_j(t)\|_{L^\infty_tC^{N}} \le C^{3/4}_0 \lambda_{j-1}^{N},\qquad
0\le N\le 3,\; l=0,1,\label{Rwj}\\
& \|\wpp_j\|_{L^{2}_tL^p \cap L^{1}_t W^{1,p}} \le C_0^{3/4}\, 2^{-\frac{j}{p}}, \qquad
\|\wss_j\|_{L^{2}_tL^q \cap L^{1}_t W^{1,q}} \le C_0^{3/4} 2^{-\frac{j}{p}},\qquad
\forall\, 1\le q\le \infty, \label{wp-L}
\end{align}
and for each $t\ge 0$,
\begin{align}
&\|w^{(\rm h)}_{j}(t)\|_{ \dot B^{N}_{\infty,1}} \leq C^{3/4}_0 e^{-\lambda^2_{j}t}\lambda^{N+1}_{j}, \quad
\|\wss_j(t)\|_{  \dot B^{N}_{\infty,1}} \leq C^{3/4}_0 e^{-(\lambda^2_j+\lambda^2_{j-1}) t} \lambda^{N+1}_{j-1},\quad -2\le N\le 2. \label{wp-wsL1}    
\end{align}
Moreover, the perturbation flows $(\wmhd_j,\dmhd_j)$ obey
\begin{align}
  &\|(\wmhd_j, \dmhd_j)\|_{\widetilde L^{\infty}_t\dot{B}^{ -1+\frac{2}{p}}_{p,1}\cap L^{1}_t\dot{B}^{1+\frac{2}{p}}_{p,1}}
\le \lambda^{-20}_{j-1},\quad  \|(\wmhd_j, \dmhd_j)\|_{\widetilde L^{\infty}_t\dot{B}^{ 0}_{\infty,1}\cap L^{1}_t\dot{B}^{2}_{\infty,1}}
\le \lambda^{2r}_{j}.
\label{e-wmdm}
\end{align}

\begin{proposition}[Inductive step]\label{iteration}
Let $2<p<\infty$. There exists an absolute constant $C_0$ with the following property. Suppose that $n \ge 2$ and that $\{(u_j, B_j)\}_{0 \le j \le n-1} \subseteq C^\infty_{t,x}(\mathbb{R}^+ \times \mathbb{R}^2)$ are solutions of the MHD equations \eqref{MHD} satisfying estimates \eqref{u-Linfty}–\eqref{e-wmdm}. Then one can construct  
\[
w_n = w^{(\rm h)}_{n} + w^{(\rm s)}_{n} + w^{{\rm (m)}}_n \,\,\text{and}\,\, d^{{\rm (m)}}_n
\]
such that 
\begin{align}\label{indentity}
 u_n := u_{n-2} + w_n,\quad B_n := B_{n-2} + d^{{\rm (m)}}_n,
\end{align}
and the pair $(u_n,B_n)$ again solves \eqref{MHD} and fulfills estimates \eqref{u-Linfty}–\eqref{e-wmdm} with $n-1$ replaced by $n$. Moreover,  we have
\begin{align}
&\|( w^{{\rm (m)}}_n,  d^{{\rm (m)}}_n)\|_{\widetilde L^{\infty}_t\dot{B}^{-\frac{2}{p}}_{2,1}\cap L^{1}_t\dot{B}^{2-\frac{2}{p}}_{2,1}} \le \lambda^{-20}_{n-1},\quad
\|(w^{{\rm (m)}}_{2q},d^{{\rm (m)}}_{2q})\|_{L^{\infty}_T{B}^{-1}_{1,1}\cap L^{1}_T{B}^{1}_{1,1}}\le \lambda^{-15}_{2q-1}.\label{wm-C}\\
& w^{\rm{(h)}}_{n-1}(0,x) = w^{\rm{(i)}}_n(0,x), \quad\text{and}\quad
\big(w^{{\rm (m)}}_n(0,x),\,d^{{\rm (m)}}_n(0,x)\big)=\big(0,\lambda^{-50}_{2\lceil\tfrac{n}{2}\rceil-1}f(x)\big). \label{ini-wm}
\end{align}
\end{proposition}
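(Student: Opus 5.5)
The plan is to build $w_n$ in three layers: an explicit heat-dominated Mikado/Fourier flow $\wpp_n$, an explicit inverse-cascade flow $\wss_n$, and an implicit corrector $(\wmhd_n,\dmhd_n)$ obtained from a forced MHD system. The first two are written down directly; the third is found by a perturbative solvability argument.

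\textbf{Explicit part.} Set $\wss_n:=\wpp_{n-1}e^{-2\lambda_n^2t}$. This immediately gives $\wss_n(0)=\wpp_{n-1}(0)$, the first identity in \eqref{ini-wm}, and together with \eqref{Rwj}, \eqref{wp-wsL1} for $j=n-1$ it yields the bounds for $\wss_n$ claimed in \eqref{Rwj}--\eqref{wp-wsL1}. For $\wpp_n$ I define, with $R_{n-1}:=\mathrm{Id}+\mathcal R w_{n-1}/(1000\|\mathcal R w_{n-1}\|_{L^\infty})\in B_\sigma(\mathrm{Id})$,
\[
\wpp_n=\lambda_n^{-1}e^{-\lambda_n^2t}\nabla^\perp\Big(\sum_{k\in\Lambda}\lambda_n\,a_k(R_{n-1})\,\eta_{n-1}\chi_{n-1}\,\phi(\lambda_n^rk\cdot x)\,e^{\ii\lambda_nk\cdot x}\Big)\cdot(\text{normalization}),
\]
so that it is exactly divergence free. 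Its leading part is $\lambda_ne^{-\lambda_n^2t}\sum_k a_k\eta\chi\phi\,e^{\ii\lambda_nk\cdot x}\ii k^\perp$.

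\textbf{Error computation.} Let $(u_{n-2},B_{n-2})$ solve \eqref{MHD} and put $u_n=u_{n-2}+w_n$, $B_n=B_{n-2}+\dmhd_n$. The equation for $(\wmhd_n,\dmhd_n)$ is then the forced MHD system \eqref{e:wt}, linearized around $u_{n-2}+\wpp_n+\wss_n$ and $B_{n-2}$, with forcing
\[
F_n=-(\partial_t-\Delta)(\wpp_n+\wss_n)-\Div\big((\wpp_n+\wss_n)\otimes(\wpp_n+\wss_n)\big)-\Div\big(u_{n-2}\otimes(\wpp_n+\wss_n)+(\wpp_n+\wss_n)\otimes u_{n-2}\big)+(\text{magnetic cross terms}).
\]
The terms are handled one at a time.
\begin{itemize}
\item The heat part is cancelled at leading order, since $e^{-\lambda_n^2t}e^{\ii\lambda_nk\cdot x}$ solves the heat equation. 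What remains involves derivatives of the slow amplitudes, costing $\lambda_n\cdot\lambda_n^{r}$ rather than $\lambda_n^2$.
\item In the quadratic term, Lemma~\ref{Betrimi} sends the high--high interactions $j+k\neq0$ into the pressure, up to amplitude derivatives. The zero mode gives, by Lemma~\ref{first S},
\[
\lambda_n^2e^{-2\lambda_n^2t}\eta^2\chi^2\phi^2 R_{n-1}=\lambda_n^2e^{-2\lambda_n^2t}\eta^2\chi^2\phi^2\Big(\mathrm{Id}+\tfrac{\mathcal R w_{n-1}}{1000\|\cdot\|}\Big).
\]
The $\mathcal R w_{n-1}$ piece is designed to be cancelled by $\partial_t\wss_n=-2\lambda_n^2\wss_n+e^{-2\lambda_n^2t}\partial_t\wpp_{n-1}$, after matching normalizations and using $\eta_{n-1}\chi_{n-1}\equiv1$ on the support of $w_{n-1}$. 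The identity piece is a gradient, and $\phi^2-\mathbb P_{=0}\phi^2$ oscillates at frequency $\lambda_n^r$.
\item The cross terms with $u_{n-2}$ are estimated in $L^1_t\dot B^{-1+2/p}_{p,1}$ using Proposition~\ref{Est-fe}. The key point is that $e^{-\lambda_n^2t}$ integrates to $\lambda_n^{-2}$, while the $\lambda_n$-oscillation gains negative Besov regularity.
\end{itemize}
The goal is $\|F_n\|_{L^1_t\dot B^{-1+2/p}_{p,1}\cap\cdots}\lesssim\lambda_{n-1}^{-30}$, using $\lambda_n=\lambda_{n-1}^b$ with $b$ huge.

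\textbf{Implicit part and estimates.} I solve the forced system by Lemma~\ref{T-D} and a contraction argument in $\widetilde L^\infty_t\dot B^{-1+2/p}_{p,1}\cap L^1_t\dot B^{1+2/p}_{p,1}$, and similarly in the $L^2$-based and $B^{-1}_{1,1}$ norms, with initial data $(0,\lambda^{-50}_{2\lceil n/2\rceil-1}f)$. The exponential factor is $e^{CV(T)}$, where $V$ is controlled by $\|u_{n-2}\|_{L^1_t\dot B^1_{\infty,1}}\le C_0n$ from \eqref{u-L1C1}. This gives at most $e^{C C_0 n}$, which is negligible against $\lambda_{n-1}^{-30}$, and yields \eqref{e-wmdm} and \eqref{wm-C}.

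Next I verify \eqref{u-Linfty}--\eqref{wp-L}. The $L^2_tL^p$ bound on $\wpp_n$ comes from $\|e^{-\lambda_n^2t}\|_{L^2_t}\sim\lambda_n^{-1}$ times $\lambda_n$, times the $L^p$ norm of $\phi(\lambda_n^r\cdot)\chi$ restricted to the shrinking support. The cutoffs $\chi_j$ are chosen with supports of measure decreasing geometrically, which gives the factor $2^{-j/p}$. The $L^1_t\dot B^1_{\infty,1}$ norm of $\wpp_n$ is $O(1)$, so \eqref{u-L1C1} increases by $C_0$ every two steps, consistent with parity.

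\textbf{Main obstacle.} The hardest step is the exact cancellation between the zero mode of $\Div(\wpp_n\otimes\wpp_n)$ and $\partial_t\wss_{n+1}$ at the next level, together with consistently normalizing $\wpp_n$ in terms of $\mathcal R w_{n-1}$. If the cutoffs or the Mikado profile $\phi^2$ spoil exactness, I would split off $\PP\phi^2$, whose frequency is $\lambda_n^r$, and absorb it into $F_n$ using the gain $\lambda_n^{-2}$ from time integration.
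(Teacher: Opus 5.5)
Your overall architecture matches the paper's. You take an explicit $\wpp_n$, set $\wss_n=\wpp_{n-1}e^{-2\lambda_n^2t}$, and obtain a small corrector from \eqref{e:wt} via Lemma \ref{T-D}, with a Gronwall loss that the doubly exponential smallness of the forcing beats. However, the step you call the main obstacle fails as you have set it up.

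\textbf{The cancellation does not close.} You build the amplitude from $R_{n-1}=\mathrm{Id}+\mathcal R w_{n-1}/(1000\|\mathcal R w_{n-1}\|_{L^\infty})$, where $w_{n-1}=\wpp_{n-1}+\wss_{n-1}+\wmhd_{n-1}$ is the full increment and $\mathcal R=(\nabla+\nabla^{\top})(-\Delta)^{-1}$ is nonlocal.
\begin{itemize}
\item After matching constants, the zero mode of $\Div(\wpp_n\otimes\wpp_n)$ equals $\nabla(\cdot)+2\lambda_n^2e^{-2\lambda_n^2t}\Div(\eta_{n-1}^2\chi_{n-1}^2\mathcal R w_{n-1})$. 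But $\partial_t\wss_n$ only removes $2\lambda_n^2e^{-2\lambda_n^2t}\wpp_{n-1}$.
\item The leftover contains $2\lambda_n^2e^{-2\lambda_n^2t}\wss_{n-1}$. Since $\int_0^\infty2\lambda_n^2e^{-2\lambda_n^2t}\,\dd t=1$ and $\wss_{n-1}$ varies only on the slow scale $\lambda_{n-1}^{-2}$, its $L^1_t\dot B^{-1+\frac2p}_{p,1}$ norm is essentially $\|\wpp_{n-2}(0)\|_{\dot B^{-1+\frac2p}_{p,1}}$. That quantity is large, not $O(\lambda_{n-1}^{-30})$, so the perturbative solve of \eqref{e:wt} collapses.
\item Your justification, that $\eta_{n-1}\chi_{n-1}\equiv1$ on the support of $w_{n-1}$, is false, because $\wmhd_{n-1}$ solves a parabolic system and is not compactly supported.
\item It is also the wrong condition. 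Since $\Div(\eta^2\chi^2\mathcal R w)=\eta^2\chi^2w+(\mathcal R w)\nabla(\eta^2\chi^2)$, you need the anti-divergence $\mathcal R w$ itself to vanish where $\nabla(\eta\chi)\neq0$. Otherwise the commutator, multiplied by $\lambda_n^2e^{-2\lambda_n^2t}$ (time integral $O(1)$), survives at order one.
\end{itemize}

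\textbf{How the paper closes it.} Two design choices, both missing from your proposal:
\begin{enumerate}
\item The amplitude \eqref{def-akq} uses $\mathcal R\wpp_{2q-1}$ alone. It is normalized by the fixed constant $1000C_0$ with prefactor $(2000C_0)^{1/2}$, and mollified at scale $\ell_{2q-1}$. The mollification also supplies the space--time derivatives the heat error needs; the induction controls no $\partial_t$ or high derivatives of $\mathcal R\wmhd_{n-1}$. The resulting zero mode is exactly what $\wss_{2q}=\wpp_{2q-1}e^{-2\lambda_{2q}^2t}$ cancels.
\item $\wpp$ is written as $-\lambda^{-3}e^{-\lambda^2t}\Delta\nabla^\perp(\cdots)$, not as $\nabla^\perp$ of a scalar. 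Because $\Delta=\Div(\nabla+\nabla^{\top})$ on divergence-free fields, its anti-divergence \eqref{def-Rw} is an explicit matrix supported in $\spt(\eta\chi\phi)$. Hence $\eta_{2q}^2\chi_{2q}^2\mathcal R\wpp_{2q}=\mathcal R\wpp_{2q}$, which makes \eqref{def-ws} and \eqref{dec-2} exact. With your $\nabla^\perp$ form, $(-\Delta)^{-1}\wpp_n$ has a nonlocal tail, and you would at best get an approximate identity needing separate tail estimates.
\end{enumerate}

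\textbf{Your fallback for $\PP\phi^2_{k,n}$ misattributes the gain.} That term carries $\lambda_n^2e^{-2\lambda_n^2t}$, so time integration yields $O(1)$, not $\lambda_n^{-2}$. The smallness comes from two sources, as in \eqref{E2-B-1}. One is the spatial oscillation at frequency $\lambda_n^r$ via Proposition \ref{Est-fe}, giving $\lambda_n^{-r}$ in $\dot B^{-1}_{\infty,\infty}$, interpolated against the $\dot B^0$ bound. The other is that the mollification mismatch is only $O(\ell_{2q-1}^2\lambda_{2q-1}^3)$.
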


We now show that Theorem~\ref{t:main} follows directly from Proposition~\ref{iteration}.

\noindent \textbf{Proposition \ref{iteration} implies Theorem \ref{t:main}.}  
Applying Proposition~\ref{iteration} repeatedly generates two families of smooth solutions 
$\{(u_{2q-1},B_{2q-1})\}_{q\ge 1}$ and $\{(u_{2q},B_{2q})\}_{q\ge 1}$ 
to the MHD system \eqref{MHD}, given explicitly by
\begin{align*}
(u_{2q-1}, B_{2q-1}) = \Bigl( \sum_{l=1}^{q} w_{2l-1},\; \sum_{l=1}^{q} d^{\rm (m)}_{2l-1} \Bigr), \,\,\text{and}\,\,
(u_{2q}, B_{2q})     = \Bigl( \sum_{l=1}^{q} w_{2l},\; \sum_{l=1}^{q} d^{\rm (m)}_{2l} \Bigr).
\end{align*}

Since $p>2$, we have the continuous embeddings
\begin{align*}
\widetilde{L}^{\infty}_t\dot{B}^{-\frac{2}{p}}_{2,1}(\R^2)\cap L^{1}_t\dot{B}^{2-\frac{2}{p}}_{2,1}(\R^2)
&\hookrightarrow \widetilde{L}^{\infty}_t\dot{B}^{-1}_{p,1}(\R^2)\cap L^{1}_t\dot{B}^{1}_{p,1}(\R^2) 
\hookrightarrow L^{2}_t L^p(\R^2) \cap L^{1}_t W^{1,p}(\R^2),
\end{align*}
and
\[
L^\infty_T B^{-1}_{1,1}(\R^2)\cap L^1_T B^1_{1,1}(\R^2)
\hookrightarrow L^2_T B^0_{1,1}(\R^2)\cap L^1_T B^1_{1,1}(\R^2)
\hookrightarrow L^2_T L^1(\R^2)\cap L^1_T W^{1,1}(\R^2).
\]
Combining these embeddings with the estimates \eqref{wp-L} and \eqref{wm-C} yields
\begin{align*}
\sum_{q=1}^\infty \bigl\| (w_{2q}, w_{2q-1}, d^{(\rm m)}_{2q}, d^{(\rm m)}_{2q-1}) \bigr\|_{L^2_t L^p \cap L^1_t W^{1,p} \cap L^2_T L^1 \cap L^1_T W^{1,1}}
\le \sum_{q=1}^\infty \bigl( 2C_0^{3/4} 2^{-\frac{2q}{p}} + \lambda_{2q-1}^{-10} \bigr) < \infty.
\end{align*}
In view of \eqref{wmhd1}, \eqref{wmhd1-1} and \eqref{indentity}, the above estimate implies that both 
$\{(u_{2q-1},B_{2q-1})\}_{q\ge 1}$ and $\{(u_{2q},B_{2q})\}_{q\ge 1}$ are Cauchy sequences in the space
\[
L^{2}_t L^p(\R^2) \cap L^{1}_t W^{1,p}(\R^2) \cap L^2_T L^1(\R^2) \cap L^1_T W^{1,1}(\R^2).
\]
Hence there exist the two limit functions
\[
(u, B),\;(\widetilde{u}, \widetilde{B}) \in L^2\bigl([0,T]; L^1(\R^2)\cap L^p(\R^2)\bigr) \cap L^1\bigl([0,T]; W^{1,1}(\R^2)\cap W^{1,p}(\R^2)\bigr)
\]
such that, as $q\to\infty$,
\[
(u_{2q-1},B_{2q-1}) \to (u, B), \qquad
(u_{2q}, B_{2q}) \to (\widetilde{u}, \widetilde{B}),
\]
strongly in $L^{2}_T(L^1(\R^2)\cap L^p(\R^2)) \cap L^{1}_T(W^{1,1}(\R^2)\cap W^{1,p}(\R^2))$.  
Moreover, both $(u, B)$ and $(\widetilde{u},  \widetilde{B})$ satisfy the  MHD system \eqref{MHD}.

From the initial conditions \eqref{ini-wm}, we deduce
\[
u_{2q}(0,x) = u_{2q+1}(0,x) - \wpp_{2q+1}(0,x), \qquad
B_{2q}(0,x) = B_{2q+1}(0,x) = \sum_{j=1}^q \lambda_{2j-1}^{-50} f(x).
\]
Thanks to \eqref{wp-wsL1}, the term $\wpp_{2q+1}(0,x)\to 0$  in $\dot{B}^{-2}_{\infty,1}(\R^2)$ as $q\to\infty$. Consequently,
\[
\bigl(u(0,x), B(0,x)\bigr) = \bigl(\widetilde{u}(0,x), \widetilde{B}(0,x)\bigr)
\qquad \text{in the sense of distributions}.
\]

It remains to prove that $(u, B) \neq (\widetilde{u}, \widetilde{B})$.  
Because both solutions belong to $C^\infty(\mathbb{R}^{+}\times \mathbb{R}^2)$, it suffices to show that 
\[
\|(u, B)(\lambda_1^{-2}, \cdot)\|_{\dot{B}^{-1}_{\infty,1}}
\neq \|(\widetilde{u}, \widetilde{B})(\lambda_1^{-2}, \cdot)\|_{\dot{B}^{-1}_{\infty,1}}.
\]

Recall that
\[
u_1 = \wpp_{1} + \wmhd_1
    = \varepsilon_0 \lambda_1^{-3} e^{-\lambda_1^2 t} \Delta \nabla^{\perp}
      \bigl( \eta_0 \operatorname{curl}(\sin(\lambda_1 x_1) e_2) \bigr) + \wmhd_1.
\]
At $t = \lambda_1^{-2}$, we obtain the lower bound
\begin{align*}
\bigl\|\wpp_{1}(\lambda_1^{-2},\cdot)\bigr\|_{\dot{B}^{-1}_{\infty,1}}
&\ge C\varepsilon_0 e^{-1} \lambda_1^{-3}
   \bigl\| \Delta \bigl( \eta_0 \operatorname{curl}(\sin(\lambda_1 x_1) e_2) \bigr) \bigr\|_{\dot{B}^{0}_{\infty,1}} \\
&\ge C\varepsilon_0 e^{-1} \lambda_1^{-3}
   \bigl\| \eta_0 \Delta \operatorname{curl}(\sin(\lambda_1 x_1) e_2) \bigr\|_{\dot{B}^{0}_{\infty,1}} \\
&\quad - C\varepsilon_0 e^{-1} \lambda_1^{-3}
   \Bigl( \bigl\| (\Delta\eta_0) \operatorname{curl}(\sin(\lambda_1 x_1) e_2) \bigr\|_{\dot{B}^{0}_{\infty,1}} \\
&\qquad\quad + 2\sum_{l=1}^2 \bigl\| \partial_l\eta_0 \, \partial_l \operatorname{curl}(\sin(\lambda_1 x_1) e_2) \bigr\|_{\dot{B}^{0}_{\infty,1}} \Bigr) \\
&\ge \frac{C}{2} \varepsilon_0 e^{-1} =: M_0.
\end{align*}
Using \eqref{wmhd1}, we obtain
\begin{align*}
\bigl\| u_{1}(\lambda_1^{-2},\cdot) \bigr\|_{\dot{B}^{-1}_{\infty,1}}
&\ge \bigl\| \wpp_{1}(\lambda_1^{-2},\cdot) \bigr\|_{\dot{B}^{-1}_{\infty,1}}
   - \bigl\| \wmhd_1 \bigr\|_{\dot{B}^{-1}_{\infty,1}} \ge \frac{C}{2} \varepsilon_0 e^{-1}
   - C\varepsilon_0 \bigl( \lambda_1^{-1+\frac{2}{p}} + \lambda_1^{-\frac{2}{p}} \bigr)
   \ge \frac{1}{2} M_0,
\end{align*}
provided that the integer $a$ is chosen sufficiently large.  
Thanks to \eqref{wp-wsL1} and \eqref{e-wmdm}, we have
\begin{align*}
\sum_{q\ge 1} \bigl\| (\wpp_{2q}, \wss_{2q}, \wmhd_{2q}, \wpp_{2q+1}, \wss_{2q+1}, \wmhd_{2q+1})(\lambda_1^{-2},\cdot) \bigr\|_{\dot{B}^{-1}_{\infty,1}}
\lesssim \sum_{q\ge 1} \bigl( C_0^{2} e^{-\lambda_{2q}^2 \lambda_1^{-2}} + \lambda_{2q-1}^{-10} \bigr)
\lesssim \lambda_1^{-8},
\end{align*}
where the last inequality holds for large enough $a$. Consequently,
\begin{align*}
&\bigl\| (u- \widetilde{u})(\lambda_1^{-2},\cdot) \bigr\|_{\dot{B}^{-1}_{\infty,1}}\\
\ge& \bigl\| \wpp_{1}(\lambda_1^{-2},\cdot) \bigr\|_{\dot{B}^{-1}_{\infty,1}}
   - \sum_{q\ge 1} 6 \bigl\| (\wpp_{2q}, \wmhd_{2q}, \wss_{2q}, \wpp_{2q+1}, \wss_{2q+1}, \wmhd_{2q+1})(\lambda_1^{-2},\cdot) \bigr\|_{\dot{B}^{-1}_{\infty,1}} \\
\ge& \frac12 M_0 - C\lambda_1^{-8}
   \ge \frac{M_0}{4},
\end{align*}
as soon as $a$ is sufficiently large. This completes the proof of Theorem~\ref{t:main}.  
We now proceed to establish Proposition~\ref{iteration}.

\section{Proof of Proposition \ref{iteration}}\label{proof-1}
Without loss of generality, we may assume that the index $n$ is even; write $n = 2q$ with $q\in\mathbb{N}$.  
We shall construct the increment $(w_{2q}, \dmhd_{2q})$ to be added to the existing solution $(u_{2q-2}, B_{2q-2})$, thereby obtaining $(u_{2q}, B_{2q}) = (u_{2q-2} + w_{2q},\, B_{2q-2} + \dmhd_{2q})$.  
The velocity increment $w_{2q}$  is decomposed into three components:
\begin{itemize}
    \item the heat‑dominated Fourier mode flow $\wpp_{2q}$,
    \item the inverse‑cascade dominated flow $\wss_{2q}$,
    \item the perturbation flow $\wmhd_{2q}$.
\end{itemize}
Thus $w_{2q} = \wpp_{2q} + \wss_{2q} + \wmhd_{2q}$, while the magnetic increment is simply $\dmhd_{2q}$.

\subsection{Construction of the flows $\wpp_{2q}$ and $\wss_{2q}$}\label{Con-wpws} To ensure that $\{\wpp_{2q}\}$ and $\{\wss_{2q}\}$ are Cauchy sequences in the integrable spaces $L^p$, we introduce the cutoff function $\chi_{2q-1}$. This idea, originating from \cite{CP}, will be employed to obtain a monotonically decreasing sequence of compact support volumes in the iteration scheme. First, let $\phi$ be a smooth, even function on the torus $\mathbb{T}$ such that $\int_{\mathbb{T}} \phi^2 \,\dd x = 1$ and $\operatorname{spt}\phi \subseteq \bigcup_{m \in \mathbb{Z}} \bigl([-\tfrac{1}{100}, \tfrac{1}{100}] + 2\pi m\bigr)$. The building blocks are then defined as
\begin{align}\label{def-phi}
\phi_{k,l}(x) = \phi\bigl( \lambda^{r}_{l} \, k \cdot x \bigr), \qquad l \in \mathbb{N}^{+}, \; k \in \Lambda,
\end{align}
where $\Lambda$ is the finite set defined in Lemma \ref{first S}. For $m\ge 1$, we set
\begin{align}\label{def-Omega}
&\Omega_{m} =  \bigcap_{1 \le l \le m} \, \bigcup_{k \in \Lambda} \operatorname{spt}\, \phi_{k,l}=:\bigcap_{1 \le l \le m} \, \bigcup_{k \in \Lambda} \Omega_{k,l},\notag\\
&\widetilde{\Omega}_{m}=\bigcap_{1 \le l \le m}\bigcup_{k \in \Lambda} \widetilde{\Omega}_{k,l},\qquad
\widetilde{\Omega}_{k,l} := \bigl\{ x : d(x, \Omega_{k,l}) < \tfrac{1}{100} \lambda_l^{-r} \bigr\}.
\end{align}
Then a sequence $\{\chi_m\}$ is defined recursively by: $\chi_1 = 1$ on $\Omega_1$ with $\spt\chi_1\subseteq\widetilde{\Omega_1}$. For $m\ge 2$,
\begin{align}\label{def-chi}
 \chi_m= 1 \text{ on } \spt\,(\chi_{m-1}\phi_m) \quad\text{and}\quad \spt\,\chi_m\subseteq\widetilde{\Omega}_m,   
\end{align}
and these functions satisfy $\|\chi_m\|_{C^N} \lesssim \lambda_m^{Nr}$ for $m\ge 1, N\ge 0$.

We now define \textit{the heat‑dominated Fourier mode flow} $\wpp_{2q}$ by
\begin{equation}\label{def-wh}
\wpp_{2q} 
= -\lambda^{-3}_{2q} e^{-\lambda^2_{2q }t} \, \Delta \, \nabla^{\perp} \Bigl( \sum_{k \in \Lambda} a_{(k,2q)} \, \eta_{2q-1}\chi_{2q-1} \, \phi_{k,2q} \; \operatorname{curl} \bigl( e^{\mathrm{i}\lambda_{2q } k \cdot x} \kp \bigr) \Bigr),
\end{equation}

where
\begin{itemize}
\item[(i)] $\eta_{2q-1}$, $\phi_{k,2q}$ and $\chi_{2q-1}$ are  defined in \eqref{def-eta}, \eqref{def-phi} and \eqref{def-chi}, respectively;
\item[(ii)] for each $k\in\Lambda$, the amplitude $a_{(k,2q)}$ is given by
\begin{align}\label{def-akq}
a_{(k,2q)}(t,x) 
&= (2000C_0)^{1/2} \int_{t}^{t+\ell_{2q-1}} \Bigl( a_k \bigl( \mathrm{Id} + \tfrac{ \mathcal{R}w^{(p)}_{2q-1} }{ 1000C_0} \bigr) \ast \psi_{\ell_{2q-1}} \Bigr)(x,s) \; \varphi_{\ell_{2q-1}}(t-s) \, \dd s 
\end{align}
with $a_k$ the coefficients from Lemma~\ref{first S} and $\psi_{\ell}$, $\varphi_{\ell}$ the mollifiers from Definition~\ref{e:defn-mollifier-t};

\item[(iii)] from the definitions of $a_{(k,2q)}$ and $\phi_{k,2q}$ we immediately obtain, for any $N\ge 0$,
\begin{align}\label{est-aphi}
 \|a_{(k,2q)}\|_{L^\infty_t C^N_x} \lesssim (2000C_0)^{1/2} \, \ell_{2q-1}^{-N}, \qquad
\|\phi_{k,2q}\|_{C^N} \lesssim \lambda^{rN}_{2q};   
\end{align}

\item[(iv)] since $k \perp \bar{k}$ and $|k|=1$, one has
$-\Delta \nabla^{\perp} \operatorname{curl} \bigl( e^{\mathrm{i}\lambda_{2q } k \cdot x} \kp \bigr) = \lambda^4_{2q} e^{\mathrm{i}\lambda_{2q } k \cdot x} \kp$, which yields the decomposition 
\begin{align}\label{dec-wh}
    \wpp_{2q}= \wpm_{2q} + \wpr_{2q}
\end{align}
with
\begin{align}
\wpm_{2q} &= \lambda_{2q} e^{-\lambda^2_{2q}t} \sum_{k \in \Lambda} a_{(k,2q)} \chi_{2q-1}\eta_{2q-1} \phi_{k,2q} e^{\mathrm{i}\lambda_{2q } k \cdot x} \kp, \label{def-wpm}\\[4pt]
\wpr_{2q} &= -\lambda^{-1}_{2q} e^{-\lambda^2_{2q}t} \sum_{k \in \Lambda} \Delta\bigl( a_{(k,2q)} \chi_{2q-1}\eta_{2q-1} \phi_{k,2q} \bigr) e^{\mathrm{i}\lambda_{2q } k \cdot x} \kp \notag\\
&\quad - 2\mathrm{i} e^{-\lambda^2_{2q}t} \sum_{k \in \Lambda} \nabla\bigl( a_{(k,2q)} \chi_{2q-1}\eta_{2q-1} \phi_{k,2q} \bigr) \cdot k \, e^{\mathrm{i}\lambda_{2q } k \cdot x} \kp\notag \\
&\quad - \lambda^{-3}_{2q} e^{-\lambda^2_{2q}t} \Delta \Bigl( \sum_{k \in \Lambda} \nabla^{\perp}\bigl( a_{(k,2q)} \chi_{2q-1}\eta_{2q-1} \phi_{k,2q} \bigr) \, \operatorname{curl}\bigl( e^{\mathrm{i}\lambda_{2q } k \cdot x} \kp \bigr) \Bigr).\label{def-wpr}
\end{align}
\item [(v)] Thanks to $\Delta f=\Div(\nabla+\nabla^{\TT})f$ for $\Div f=0$, we infer from \eqref{def-wh} that
\begin{align}\label{def-Rw}
\wpp_{2q} 
&= -\lambda^{-3}_{2q} e^{-\lambda^2_{2q }t} \, \Div(\nabla+\nabla^{\TT}) \, \nabla^{\perp} \Bigl( \sum_{k \in \Lambda} a_{(k,2q)} \, \eta_{2q-1}\chi_{2q-1} \, \phi_{k,2q} \; \operatorname{curl} \bigl( e^{\mathrm{i}\lambda_{2q } k \cdot x} \kp \bigr) \Bigr)
\notag\\
&=:\Div(\mathcal {R}\wpp_{2q}),
\end{align}
which shows that {$\eta^2_{2q}\chi^2_{2q}\mathcal{R}\wpp_{2q}=\mathcal{R}\wpp_{2q}$.}
\end{itemize}

We now define \textit{the inverse‑cascade dominated flow} $\wss_{2q}$ by  
\begin{equation}\label{wttq}
\wss_{2q} = \frac{1}{2} \chi^2_{2q-1}\eta^2_{2q-1} \sum_{k \in \Lambda} \Div \Bigl( a^2_k \bigl( \mathrm{Id} + \tfrac{ \mathcal{R}\wpp_{2q-1} }{ 1000C_0} \bigr) (2000C_0) \, \kp \otimes \kp \Bigr) e^{-2\lambda^2_{2q }t}.
\end{equation}
Applying Lemma~\ref{first S} and the property $\chi^2_{2q-1}\eta^2_{2q-1}\wpp_{2q-1} = \wpp_{2q-1}$, we obtain 
\begin{align}
\wss_{2q} 
&= \frac{1}{2} \chi^2_{2q-1}\eta^2_{2q-1} \Div\bigl( \mathrm{Id} + \tfrac{\mathcal{R}\wpp_{2q-1}}{1000C_0} \bigr) (2000C_0) e^{-2\lambda^2_{2q }t} \notag \\
&= \chi^2_{2q-1}\eta^2_{2q-1} \, \wpp_{2q-1} \, e^{-2\lambda^2_{2q }t}
 = \wpp_{2q-1} \, e^{-2\lambda^2_{2q }t}. \label{def-ws}
\end{align}
From this identity, we immediately deduce  
\begin{align}\label{wss2}
\wpp_{2q-1}(0,x) = \wss_{2q}(0,x), \qquad
\partial_t \wss_{2q} = -2\lambda^2_{2q } e^{-2\lambda^2_{2q }t} \wpp_{2q-1} + \partial_t \wpp_{2q-1} \, e^{-2\lambda^2_{2q }t}.
\end{align}

\subsection{Main estimates for $\wpp_{2q}$ and $\wss_{2q}$}\label{est-wpws} In this section we provide preliminary estimates for $\wpp_{2q}$ and $\wss_{2q}$.

\begin{proposition}[Estimates for $\wpp_{2q}$ and $\wss_{2q}$]\label{Est-wpws}
There exists a universal constant $C_0$ such that the following bounds hold:
\begin{align}
&\|\partial^l_t \mathcal{R}w^{(\rm h)}_{2q}\|_{L^{\infty}_{t}C^N} \le C^{3/4}_0 \lambda^{N+2l}_{2q}, \quad\,\,
\|\mathcal{R}w^{(\rm i)}_{2q}\|_{L^{\infty}_{t}C^N} \le C^{3/4}_0 \lambda^N_{2q-1},
\quad 0\le N\le 3,\; l=0,1; \label{wpq-wss1} \\[4pt]
&\|w^{(\rm h)}_{2q}(t)\|_{\dot B^{N}_{\infty,1}} \le C^{3/4}_0 e^{-\lambda^2_{2q}t} \lambda^{N+1}_{2q}, \,\,
\|w^{(\rm i)}_{2q}(t)\|_{\dot B^{N}_{\infty,1}} \le C^{3/4}_0 e^{-(\lambda^2_{2q}+\lambda^2_{2q-1})t} \lambda^{N+1}_{2q-1},
\quad -2\le N\le 2. \label{es-wp-BN}
\end{align}
Moreover, for the decomposition $\wpp_{2q} = \wpm_{2q} + \wpr_{2q}$ given in \eqref{dec-wh}, we have, for every $t \ge 0$ and $N \ge 0$,
\begin{align}\label{E-wpm-wpr}
\|\wpm_{2q}(t)\|_{C^N} &\le \frac{1}{2} C^{3/4}_0 e^{-\lambda^2_{2q}t} \lambda^{N+1}_{2q}, &
\|\wpr_{2q}(t)\|_{C^N} &\le C^{3/4}_0 e^{-\lambda^2_{2q}t} \lambda^{N+r}_{2q}.
\end{align}
\end{proposition}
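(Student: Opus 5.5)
The plan is to obtain every bound from the explicit formulas \eqref{def-wh}, \eqref{def-wpm}, \eqref{def-wpr}, \eqref{def-Rw} and \eqref{def-ws}. We combine the amplitude bounds \eqref{est-aphi}, the cutoff bounds $\|\chi_{2q-1}\|_{C^N}\lesssim\lambda_{2q-1}^{Nr}$ and $\|\eta_{2q-1}\|_{C^N}\lesssim\lambda_{2q-1}^{N/50}$, and the induction hypothesis \eqref{Rwj} for $\mathcal R\wpp_{2q-1}$. The constant $C_0$ is then fixed so that the absolute constants generated (from $|\Lambda|$, $\phi$, and the Besov–Hölder comparisons) are dominated by $C_0^{3/4}$. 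The amplitude carries a factor $(2000C_0)^{1/2}$, and $C_0^{1/2}\cdot C\le \frac12 C_0^{3/4}$ once $C_0$ is large.

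The key scale-separation fact is that every derivative hitting a slow factor costs far less than $\lambda_{2q}$. By \eqref{def-ell} and $b\ge 2^{10}\max\{(1-2/p)^{-1},p/2\}$, we have $\ell_{2q-1}^{-1}\le\lambda_{2q-1}^{100\max\{(1-2/p)^{-1},p/2\}}\le \lambda_{2q}^{1/10}$, and also $\lambda_{2q-1}^{r}\le\lambda_{2q}^r$. Hence each derivative on $a_{(k,2q)}\chi_{2q-1}\eta_{2q-1}$ costs at most $\lambda_{2q}^{\max(r,1/10)}$, and each derivative on $\phi_{k,2q}$ costs $\lambda_{2q}^{r}$, where $r<1/10$.

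\textbf{Step 1: the bound \eqref{E-wpm-wpr}.}
\begin{itemize}
\item For $\wpm_{2q}$, differentiating $N$ times gives the leading term $\lambda_{2q}\cdot\lambda_{2q}^N$, which comes from the phase $e^{\ii\lambda_{2q}k\cdot x}$. All other terms are lower order by a factor $\lambda_{2q}^{-9/10}$. This yields $\frac12C_0^{3/4}e^{-\lambda_{2q}^2t}\lambda_{2q}^{N+1}$.
\item For $\wpr_{2q}$, the middle term of \eqref{def-wpr} has one slow derivative and no $\lambda_{2q}$ prefactor. It therefore gives $\lambda_{2q}^{N}\cdot\lambda_{2q}^{r}$, where the $\lambda_{2q}^r$ comes from $\nabla\phi_{k,2q}$ and is the worst slow loss. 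The first and third terms are even smaller.
\item The claim as written holds for every $N$ only with an $N$-dependent constant. I would note that only $N\le 3$ is used later, so the constant is uniform.
\end{itemize}

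\textbf{Step 2: the bound \eqref{wpq-wss1}.}
\begin{itemize}
\item For $\mathcal R\wpp_{2q}$, \eqref{def-Rw} writes it as $\lambda_{2q}^{-3}e^{-\lambda_{2q}^2t}(\nabla+\nabla^{\TT})\nabla^\perp(\cdots\operatorname{curl}(e^{\ii\lambda_{2q}k\cdot x}\kp))$. This involves three derivatives, two of them fast, applied to a quantity of size $\lambda_{2q}$. It is therefore $O(\lambda_{2q}^0)$ in $C^0$ and $O(\lambda_{2q}^N)$ in $C^N$.
\item The time derivative hits either $e^{-\lambda_{2q}^2t}$, which gives $\lambda_{2q}^2$, or $a_{(k,2q)}$. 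In the latter case it falls on the mollifier $\varphi_{\ell_{2q-1}}$ and gives $\ell_{2q-1}^{-1}\le\lambda_{2q}^{1/10}$. So $l=1$ costs $\lambda_{2q}^2$.
\item For $\wss_{2q}=\wpp_{2q-1}e^{-2\lambda_{2q}^2t}$, we take $\mathcal R\wss_{2q}=\mathcal R\wpp_{2q-1}e^{-2\lambda_{2q}^2t}$. Its $C^N$ bound then follows directly from \eqref{Rwj}.
\end{itemize}

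\textbf{Step 3: the bound \eqref{es-wp-BN}.}
\begin{itemize}
\item For $N\ge0$, I would bound $\dot B^N_{\infty,1}$ by interpolation between $C^{\lfloor N\rfloor}$ and $C^{\lfloor N\rfloor+1}$, or simply by $\|\cdot\|_{C^{N}}^{1/2}\|\cdot\|_{C^{N+1}}^{1/2}$-type bounds up to a log. Since the function is frequency-localized near $|\xi|\sim\lambda_{2q}$ up to slow modulations, this gives $\lambda_{2q}^{N+1}$.
\item For the negative indices $N=-1,-2$, I would use the representation $\wpp_{2q}=\Div\mathcal R\wpp_{2q}$ together with Proposition \ref{Est-fe}, applied once or twice. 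Each application gains $\lambda_{2q}^{-1}$ and costs only slow derivatives, giving $\lambda_{2q}^{-N}$ behaviour.
\item The exponent bookkeeping is the real issue here. Low-frequency leakage from the slow envelopes must not spoil the $\lambda_{2q}^{N+1}$ gain. I expect this to be the main obstacle, and I would control it by the ratios $\ell_{2q-1}^{-1}\lambda_{2q}^{-1}\le\lambda_{2q}^{-9/10}$.
\item For $\wss_{2q}$, we use \eqref{wp-wsL1} at level $2q-1$ multiplied by $e^{-2\lambda_{2q}^2t}\le e^{-\lambda_{2q}^2t}$. This gives the factor $e^{-(\lambda_{2q}^2+\lambda_{2q-1}^2)t}\lambda_{2q-1}^{N+1}$ directly.
\end{itemize}
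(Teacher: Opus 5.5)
\textbf{Steps 1--2 and the $\wss_{2q}$ bounds.} These follow the paper's proof. You differentiate \eqref{def-wpm}, \eqref{def-wpr}, \eqref{def-Rw} directly using \eqref{est-aphi}, and you combine $\wss_{2q}=\wpp_{2q-1}e^{-2\lambda_{2q}^2t}$ with \eqref{Rwj} and \eqref{wp-wsL1}.

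\textbf{The gap: Step 3.} The $\ell^1$-summed homogeneous Besov bound \eqref{es-wp-BN} for $\wpp_{2q}$ is left as the ``main obstacle'', and the tools you name do not give it.
\begin{itemize}
\item For integer $N$, interpolating between $C^N$ and $C^{N+1}$ lands in $\dot B^{N+\theta}_{\infty,1}$ with $\theta\in(0,1)$, not in $\dot B^{N}_{\infty,1}$.
\item The geometric mean $\|\wpp_{2q}\|_{C^N}^{1/2}\|\wpp_{2q}\|_{C^{N+1}}^{1/2}\sim\lambda_{2q}^{N+3/2}$ overshoots the target by $\lambda_{2q}^{1/2}$.
\item The ``up to a log'' version loses a factor $\log\lambda_{2q}$, which the universal constant $C_0^{3/4}$ cannot absorb.
\item For $N=-1,-2$, Proposition \ref{Est-fe} only controls the $\ell^\infty$-type norm $\dot B^{-1}_{\infty,\infty}$. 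Applying it once or twice does not produce $\dot B^{-1}_{\infty,1}$ or $\dot B^{-2}_{\infty,1}$.
\end{itemize}
Your remark that the field is frequency-localized near $|\xi|\sim\lambda_{2q}$ is exactly the statement that needs proof.

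\textbf{The missing idea.} Use that $\wpp_{2q}$ is a high-order derivative of a slow-times-fast product. Since $\operatorname{curl}(e^{\ii\lambda_{2q}k\cdot x}\kp)=\pm\ii\lambda_{2q}e^{\ii\lambda_{2q}k\cdot x}$, \eqref{def-wh} reads $\wpp_{2q}=\mp\ii\lambda_{2q}^{-2}e^{-\lambda_{2q}^2t}\Delta\nabla^{\perp}\big(\sum_{k}F_ke^{\ii\lambda_{2q}k\cdot x}\big)$ with $F_k=a_{(k,2q)}\chi_{2q-1}\eta_{2q-1}\phi_{k,2q}$. Hence
\[
\|\wpp_{2q}(t)\|_{\dot B^N_{\infty,1}}\lesssim\lambda_{2q}^{-2}e^{-\lambda_{2q}^2t}\sum_{k\in\Lambda}\big\|F_ke^{\ii\lambda_{2q}k\cdot x}\big\|_{\dot B^{N+3}_{\infty,1}}.
\]
Because $N+3\ge 1>0$, you can apply the product law $\|fg\|_{\dot B^s_{\infty,1}}\lesssim\|f\|_{L^\infty}\|g\|_{\dot B^s_{\infty,1}}+\|f\|_{\dot B^s_{\infty,1}}\|g\|_{L^\infty}$ (valid for $s>0$). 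Together with $\|e^{\ii\lambda_{2q}k\cdot x}\|_{\dot B^s_{\infty,1}}\lesssim\lambda_{2q}^s$, this gives $\lambda_{2q}^{N+1}$ uniformly for $-2\le N\le 2$, with no logarithm. This is the paper's argument; it counts the curl separately, hence its $\dot B^{N+4}$ and a $C^1$ norm of the exponential.

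\textbf{Alternative repair.} You can also interpolate $\dot B^{N}_{\infty,1}$ between $\dot B^{N-1}_{\infty,\infty}$ and $\dot B^{N+1}_{\infty,\infty}$, as the paper does for $E_{2q}$ in \eqref{E1-B0}. For $N=-2$ this requires a $\dot B^{-3}_{\infty,\infty}$ bound. That bound comes from the third-order structure, or from iterating the integration by parts behind Proposition \ref{Est-fe} three times, not from Proposition \ref{Est-fe} as stated.

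\textbf{A smaller slip in Step 1.} You call the $\lambda_{2q}^r$ from $\nabla\phi_{k,2q}$ the worst slow loss, but you only showed $\ell_{2q-1}^{-1}\le\lambda_{2q}^{1/10}$, and $r<1/10$. Getting $\lambda_{2q}^{N+r}$ for $\wpr_{2q}$ needs $\ell_{2q-1}^{-1}+\lambda_{2q-1}^{1/50}\lesssim\lambda_{2q}^{r}$, i.e.\ $b$ large depending on $r$. The paper uses this implicitly, for instance in the $\lambda_{2q}^{(N+2)r}$ of \eqref{est-Rw}. It is admissible because $b$ is chosen after $p$ and $r$.
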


\begin{proof}
From the definition \eqref{def-Rw}, we obtain, for $0\le N\le 3$,
\begin{align}\label{est-Rw}
\|\partial^l_t\mathcal{R}\wpp_{2q}\|_{L^\infty_{t}C^N}
\le& \sum_{k \in \Lambda} \lambda^{-3}_{2q} \Bigl\|  \partial^l_ta_{(k,2q)} \chi_{2q-1} \eta_{2q-1} \phi_{k,2q} \operatorname{curl} \bigl( \mathrm{i} e^{\mathrm{i}\lambda_{2q} k \cdot x} \kp\bigr) \Bigr\|_{L^\infty_{t}C^{N+2}} \notag\\
&+\sum_{k \in \Lambda} \lambda^{-3+2l}_{2q} \Bigl\|  a_{(k,2q)} \chi_{2q-1} \eta_{2q-1} \phi_{k,2q} \operatorname{curl} \bigl( \mathrm{i} e^{\mathrm{i}\lambda_{2q} k \cdot x} \kp\bigr) \Bigr\|_{L^\infty_{t}C^{N+2}} \notag\\
\lesssim& \sum_{k \in \Lambda}( \lambda^{-2}_{2q}\|\partial^l_ta_{(k,2q)} \chi_{2q-1} \eta_{2q-1} \phi_{k,2q} \|_{C^{N+2}}+C^{1/2}_0\ell^{-l}_{2q-1}\lambda^N_{2q})\notag\\
&+\sum_{k \in \Lambda}( \lambda^{-2+2l}_{2q}\|a_{(k,2q)} \chi_{2q-1} \eta_{2q-1} \phi_{k,2q} \|_{C^{N+2}}+C^{1/2}_0\lambda^{N+2l}_{2q})\notag\\
\lesssim& C^{1/2}_0\lambda^{(N+2)r-2+2l}_{2q}+C^{1/2}_0\lambda^{N+2l}_{2q}\lesssim C^{1/2}_0\lambda^{N+2l}_{2q}.
\end{align}
For the Besov norm, we deduce from \eqref{def-wh} that, for $-2\le N\le 2$,
\begin{align*}
\|\wpp_{2q}(t)\|_{\dot B^{N}_{\infty,1}}
&\le \lambda^{-3}_{2q} e^{-\lambda^2_{2q}t} \Bigl(
   \|a_{(k,2q)} \chi_{2q-1}\eta_{2q-1}\phi_{k,2q}\|_{L^\infty_{t,x}} \|e^{\mathrm{i}\lambda_{2q}k\cdot x}\|_{\dot B^{N+4}_{\infty,1}} \\
&\qquad + \|a_{(k,2q)} \chi_{2q-1}\eta_{2q-1}\phi_{k,2q}\|_{L^\infty_{t}C^{N+4}} \|e^{\mathrm{i}\lambda_{2q}k\cdot x}\|_{C^1} \Bigr) \\
&\lesssim \lambda^{-3}_{2q} e^{-\lambda^2_{2q}t} \bigl( C^{1/2}_0 \lambda^{N+4}_{2q} + C^{1/2}_0 \lambda^{r(N+4)+1}_{2q} \bigr) \\
&\lesssim C^{1/2}_0 e^{-\lambda^2_{2q}t} \lambda^{N+1}_{2q}.
\end{align*}

The bounds for $\wss_{2q}$ stated in \eqref{wpq-wss1}--\eqref{es-wp-BN} follow from the representation \eqref{def-ws} together with the induction hypotheses \eqref{Rwj} and \eqref{wp-wsL1} on $\wpp_{2q-1}$.

We estimate the main term $\wpm_{2q}$ as
\begin{align*}
\|\wpm_{2q}(t)\|_{C^N}
&\lesssim \lambda_{2q} e^{-\lambda^2_{2q}t} \Bigl(
   \| a_{(k,2q)} \chi_{2q-1}\eta_{2q-1} \phi_{k,2q} \|_{L^\infty_t C^N}
   + \lambda^N_{2q} \| a_{(k,2q)} \chi_{2q-1}\eta_{2q-1} \phi_{k,2q} \|_{L^\infty_{t,x}} \Bigr) \\
&\lesssim C^{1/2}_0 e^{-\lambda^2_{2q}t} \lambda^{N+1}_{2q}.
\end{align*}
and the remainder $\wpr_{2q}$ by
\begin{align*}
\|\wpr_{2q}(t)\|_{C^N}
&\lesssim \lambda^{-1}_{2q} e^{-\lambda^2_{2q}t} \Bigl(
   \| a_{(k,2q)} \chi_{2q-1}\eta_{2q-1} \phi_{k,2q} \|_{L^\infty_t C^{N+2}}
   + \lambda^N_{2q} \| a_{(k,2q)} \chi_{2q-1}\eta_{2q-1} \phi_{k,2q} \|_{L^\infty_{t}C^2} \Bigr) \\
&\quad + e^{-\lambda^2_{2q}t} \Bigl(
   \| a_{(k,2q)} \chi_{2q-1}\eta_{2q-1} \phi_{k,2q} \|_{L^\infty_t C^{N+1}}
   + \lambda^N_{2q} \| a_{(k,2q)} \chi_{2q-1}\eta_{2q-1} \phi_{k,2q} \|_{L^\infty_{t}C^1} \Bigr) \\
&\quad + \lambda^{-3}_{2q} e^{-\lambda^2_{2q}t} \Bigl(
   \lambda_{2q} \| a_{(k,2q)} \chi_{2q-1}\eta_{2q-1} \phi_{k,2q} \|_{L^\infty_t C^{N+3}}
   + \lambda^{N+3}_{2q} \| a_{(k,2q)} \chi_{2q-1}\eta_{2q-1} \phi_{k,2q} \|_{L^\infty_{t}C^1} \Bigr) \\
&\lesssim C^{1/2}_0 e^{-\lambda^2_{2q}t} \lambda^{N+r}_{2q}.
\end{align*}
Hence, by choosing $C_0$ large enough, we complete the proof of Proposition~\ref{Est-wpws} 
\end{proof}

\begin{proposition}\label{cedu}There exists a large enough  constant $C_0$ such that
\begin{align*}
\|(w^{\rm (h)}_{2q},w^{\rm (i)}_{2q})\|_{L^{2}_t L^ m\cap L^{1}_t W^{1,m}} &\le C^{\frac{3}{4}}_0 2^{-\frac{2q}{m}}, \quad\forall 1\le m\le\infty.
\end{align*}
\end{proposition}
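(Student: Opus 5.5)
We will estimate the two flows separately. The time integrability comes from the explicit exponential factor $e^{-\lambda^2 t}$, and the decay $2^{-2q/m}$ comes from the smallness of the spatial support, which is controlled by the nested cutoffs $\chi_{2q-1}$.

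\textbf{Step 1: time integration.} By \eqref{E-wpm-wpr}, for every $t\ge0$ and $N=0,1$,
\[
\|\wpp_{2q}(t)\|_{C^N}\lesssim C_0^{1/2}e^{-\lambda_{2q}^2t}\lambda_{2q}^{N+1}.
\]
Hence
\[
\|e^{-\lambda_{2q}^2t}\lambda_{2q}\|_{L^2_t}\sim 1,\qquad \|e^{-\lambda_{2q}^2t}\lambda_{2q}^2\|_{L^1_t}\sim 1 .
\]
So the $L^2_tL^\infty$ and $L^1_tW^{1,\infty}$ norms of $\wpp_{2q}$ are $O(C_0^{1/2})$, uniformly in $q$. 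The same holds for $\wss_{2q}$. By \eqref{def-ws}, $\wss_{2q}=\wpp_{2q-1}e^{-2\lambda_{2q}^2t}$, and the induction bounds \eqref{wp-wsL1} for $\wpp_{2q-1}$ give
\[
\|\wss_{2q}(t)\|_{C^N}\lesssim e^{-(\lambda_{2q-1}^2+2\lambda_{2q}^2)t}\lambda_{2q-1}^{N+1},
\]
whose $L^2_t$ (for $N=0$) and $L^1_t$ (for $N=1$) norms are again $O(1)$.

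\textbf{Step 2: support.} Interpolate pointwise in time:
\[
\|f\|_{L^m}\le |\operatorname{spt}f|^{1/m}\|f\|_{L^\infty},
\]
and similarly for $\nabla f$. The supports of $\wpp_{2q}$ and $\wss_{2q}$ lie in $\operatorname{spt}\chi_{2q-1}\cap\operatorname{spt}\eta_{2q-1}\subseteq \widetilde\Omega_{2q-1}\cap\mathcal O_{2q}$. Each set $\bigcup_{k\in\Lambda}\widetilde\Omega_{k,l}$, intersected with the fixed box $[-\tfrac34\pi,\tfrac34\pi]^2$, is a union of $|\Lambda|$ families of thin strips of width $\sim\lambda_l^{-r}/50$ with period $2\pi\lambda_l^{-r}$. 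Its relative density is therefore at most $c_0:=|\Lambda|\cdot\tfrac{1}{25\pi}$, which is less than $\tfrac14$ once $\phi$'s support parameter is small. (If it is not, we would replace $1/100$ by a smaller constant depending on $|\Lambda|$.)

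Because $\lambda_{l+1}^{r}/\lambda_l^{r}=\lambda_l^{r(b-1)}\to\infty$ and the lattices are rational with $N_\Lambda^M$ factors, the strips at scale $l+1$ equidistribute inside the strips at scale $l$ up to boundary errors of relative size $\lambda_l^{r}\lambda_{l+1}^{-r}$. This gives
\[
|\widetilde\Omega_m\cap[-\pi,\pi]^2|\lesssim \prod_{l\le m}\bigl(c_0+C\lambda_l^{r}\lambda_{l+1}^{-r}\bigr)\le C\,4^{-m}.
\]
Since $4^{-(2q-1)/m}\lesssim 2^{-4q/m}$, we obtain $|\operatorname{spt}|^{1/m}\lesssim 2^{-2q/m}$ with room to spare; the case $m=\infty$ is trivial.

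\textbf{Step 3: conclusion.} Combining Steps 1 and 2 gives
\[
\|(\wpp_{2q},\wss_{2q})\|_{L^2_tL^m\cap L^1_tW^{1,m}}\lesssim C_0^{1/2}2^{-2q/m}.
\]
Choosing $C_0$ large absorbs the implicit constant, giving the bound $C_0^{3/4}2^{-2q/m}$. For $\wss_{2q}$ one could alternatively use \eqref{wp-L} for $\wpp_{2q-1}$, but that only yields $2^{-(2q-1)/m}$. We therefore use the direct support argument, since $\wss_{2q}$ is supported where $\chi_{2q-1}$ is.

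\textbf{Main obstacle.} The main obstacle is the quantitative measure estimate for the nested intersections $\widetilde\Omega_m$. One must show the per-level density factor is at most $1/4$, accounting for all $|\Lambda|$ directions, the $\lambda_l^{-r}/100$ thickening, and boundary effects between consecutive scales. We would handle this by a Fubini/equidistribution count: on each cell of the level-$l$ strip pattern, the level-$(l+1)$ strips, which are $\lambda_{l+1}^{-r}$-periodic in the directions $k$, occupy at most a fraction $c_0+O(\lambda_l^{r}/\lambda_{l+1}^{r})$ of the area.
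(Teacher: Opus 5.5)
Your proof is correct and follows the paper's route. The case $m=\infty$ is the same direct integration of the exponential time factors. For $m<\infty$ the paper only notes that the supports lie in $[-\pi,\pi]^2$ and cites \cite[Proposition 3.3]{MNY3}; your H\"older-on-shrinking-support argument through the nested cutoffs $\chi_{2q-1}$ is the mechanism those cutoffs were introduced for, and it fills in that citation. One small fix: the boundary error in your measure count is absolute, of size $\lesssim\lambda_l^{r}\lambda_{l+1}^{-r}$, so write it as the recursion $|\widetilde\Omega_{l+1}\cap[-\pi,\pi]^2|\le c_0|\widetilde\Omega_l\cap[-\pi,\pi]^2|+C\lambda_l^{r}\lambda_{l+1}^{-r}$ rather than as a product of relative factors. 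Also, any per-level density $c_0<\frac12$ already suffices, and the actual density $|\Lambda|/(50\pi)$ satisfies this without altering $\phi$.
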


\begin{proof}
For $1 \le m < \infty$, we observe that both $\wpp_{2q}$ and $\wss_{2q}$ are supported in the square $[-\pi,\pi]^2$.  
Consequently, we can use the argument  in \cite[Proposition 3.3]{MNY3}  to show the desired bounds.

For the endpoint $m= \infty$, an directly computation yields that
\begin{align*}
\|\wpp_{2q}\|_{L^2_t L^\infty \cap L^1_t W^{1,\infty}}
&\lesssim \sum_{k \in \Lambda} \lambda^{-3}_{2q} \|e^{-\lambda^2_{2q}t}\|_{L^2} \,
   \bigl\| a_{(k,2q)} \chi_{2q-1}\eta_{2q-1} \phi_{k,2q} \, \operatorname{curl} \bigl( e^{\mathrm{i}\lambda_{2q} k \cdot x} \bar{k} \bigr) \bigr\|_{L^\infty C^3} \\
&\quad + \sum_{k \in \Lambda} \lambda^{-3}_{2q} \|e^{-\lambda^2_{2q}t}\|_{L^1} \,
   \bigl\| a_{(k,2q)} \chi_{2q-1}\eta_{2q-1} \phi_{k,2q} \, \operatorname{curl} \bigl( e^{\mathrm{i}\lambda_{2q} k \cdot x} \bar{k} \bigr) \bigr\|_{L^\infty C^4}\\
  & \lesssim C^{1/2}_0
\end{align*}
and
\begin{align*}
\|\wss_{2q}\|_{L^2_t L^\infty \cap L^1_t W^{1,\infty}}
&= \|\wpp_{2q-1} e^{-2\lambda^2_{2q}t}\|_{L^2_t L^\infty \cap L^1_t W^{1,\infty}} \le \|\wpp_{2q-1}\|_{L^2_t L^\infty \cap L^1_t W^{1,\infty}}
 \lesssim C^{1/2}_0.
\end{align*}
Thus Proposition~\ref{cedu} is proved.
\end{proof}

\begin{proposition}\label{prop-E}
Let $w^{\rm (h)}_{2q}$ and $w^{\rm (i)}_{2q}$ be defined in \eqref{def-wh} and \eqref{def-ws}. Then there exist a pressure $P_{2q}$ and an error term $E_{2q}$ such that
\begin{align}
    \Div(w^{\rm (h)}_{2q} \otimes w^{\rm (h)}_{2q}) + \partial_t w^{\rm (i)}_{2q} = E_{2q} + \nabla P_{2q}.
    \label{decom-wpwp}
\end{align}
\end{proposition}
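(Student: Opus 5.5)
The decomposition \eqref{decom-wpwp} is trivial as stated (take $P_{2q}=0$). Its real content is that the error $E_{2q}$ is small, of size a negative power of $\lambda_{2q-1}$ in $L^1_t\dot B^{-1+\frac2p}_{p,1}$ (and in the other norms used later to solve for $(\wmhd_{2q},\dmhd_{2q})$). So I would define $P_{2q}$ explicitly and then estimate each piece of $E_{2q}$. I would use the splitting \eqref{dec-wh}, $\wpp_{2q}=\wpm_{2q}+\wpr_{2q}$, and write
\[
\wpp_{2q}\otimes\wpp_{2q}=\wpm_{2q}\otimes\wpm_{2q}+\bigl(\wpm_{2q}\otimes\wpr_{2q}+\wpr_{2q}\otimes\wpm_{2q}+\wpr_{2q}\otimes\wpr_{2q}\bigr).
\]
By \eqref{E-wpm-wpr}, the bracket has size $C_0^{3/2}e^{-2\lambda_{2q}^2t}\lambda_{2q}^{1+r}$. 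Its divergence, measured in $L^1_t$ of a negative-regularity Besov space, therefore gains the factor $\lambda_{2q}^{-2}$ from the time integral, which makes it of size about $\lambda_{2q}^{-1+2r}$. I would put this whole bracket into $E_{2q}$.

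For the principal part, set $A_k:=a_{(k,2q)}\chi_{2q-1}\eta_{2q-1}\phi_{k,2q}$. Then
\[
\wpm_{2q}\otimes\wpm_{2q}=\lambda_{2q}^2e^{-2\lambda_{2q}^2t}\sum_{k,k'\in\Lambda}A_kA_{k'}e^{\ii\lambda_{2q}(k+k')\cdot x}\,\kp\otimes k'^{\perp}.
\]
\emph{Non-resonant terms} ($k+k'\neq0$). For constant amplitudes, Lemma~\ref{Betrimi} shows that the divergence is an exact gradient; I would put this gradient into $P_{2q}$. What remains are terms where one derivative falls on $A_kA_{k'}$. These have amplitude $\lesssim C_0\lambda_{2q}^{2}\bigl(\lambda_{2q}^{r}+\ell_{2q-1}^{-1}\bigr)e^{-2\lambda_{2q}^2t}$ and oscillate at frequency $\lambda_{2q}|k+k'|\gtrsim\lambda_{2q}$. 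Proposition~\ref{Est-fe}, together with interpolation to $\dot B^{-1+\frac2p}_{p,1}$ using the compact support, then gains $\lambda_{2q}^{-1}$. After the time integral the total is $O(\lambda_{2q}^{-1+r}\ell_{2q-1}^{-1})$, which is a tiny negative power of $\lambda_{2q-1}$ because $\lambda_{2q}=\lambda_{2q-1}^b$ with $b$ huge.

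\emph{Resonant terms} ($k'=-k$). These give
\[
\lambda_{2q}^2e^{-2\lambda_{2q}^2t}\sum_k a_{(k,2q)}^2\chi_{2q-1}^2\eta_{2q-1}^2\phi_{k,2q}^2\,\kp\otimes\kp .
\]
I would split $\phi_{k,2q}^2=1+\PP(\phi^2)(\lambda_{2q}^rk\cdot x)$ and treat the two parts separately.

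For the oscillatory part, the key observation is $\Div\bigl(g(k\cdot x)\kp\otimes\kp\bigr)=(\kp\cdot\nabla g)\kp=0$. Hence only derivatives landing on $a^2_{(k,2q)}\chi^2\eta^2$ survive. These are multiplied by a function oscillating at frequency $\lambda_{2q}^r$, so the analogue of Proposition~\ref{Est-fe} at frequency $\lambda_{2q}^r$ gives a bound like $\lambda_{2q}^{2-r}\ell_{2q-1}^{-1}e^{-2\lambda_{2q}^2t}$, and after time integration $\lambda_{2q}^{-r}\ell_{2q-1}^{-1}$.

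For the mean part, I would replace $a_{(k,2q)}^2$ by $(2000C_0)\,a_k^2\bigl(\mathrm{Id}+\mathcal R\wpp_{2q-1}/(1000C_0)\bigr)$. The commutator error from the space-time mollification is $O(\ell_{2q-1}\lambda_{2q-1}^{2})$, by \eqref{Rwj} and \eqref{est-aphi}. Lemma~\ref{first S} then gives
\[
\lambda_{2q}^2e^{-2\lambda_{2q}^2t}(2000C_0)\,\Div\Bigl(\chi^2_{2q-1}\eta^2_{2q-1}\bigl(\mathrm{Id}+\tfrac{\mathcal R\wpp_{2q-1}}{1000C_0}\bigr)\Bigr).
\]
The $\mathrm{Id}$ part is $\nabla\bigl(\lambda_{2q}^2e^{-2\lambda_{2q}^2t}2000C_0\,\chi^2\eta^2\bigr)$, which goes into $P_{2q}$. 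Since $\chi^2\eta^2\mathcal R\wpp_{2q-1}=\mathcal R\wpp_{2q-1}$, the other part is exactly $2\lambda_{2q}^2e^{-2\lambda_{2q}^2t}\wpp_{2q-1}$ (here I would double-check the normalisation). By \eqref{wss2}, this cancels the term $-2\lambda_{2q}^2e^{-2\lambda_{2q}^2t}\wpp_{2q-1}$ in $\partial_t\wss_{2q}$. The leftover $e^{-2\lambda_{2q}^2t}\partial_t\wpp_{2q-1}$ is bounded by $\lambda_{2q-1}^{2}\cdot\lambda_{2q-1}^{1}$ times the time factor; its $L^1_t$ norm is $\lesssim\lambda_{2q-1}^3\lambda_{2q}^{-2}$, and it goes into $E_{2q}$.

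The main obstacle is the oscillatory resonant term. The gain there is only $\lambda_{2q}^{-r}$, with $r$ small and tied to $p$, and it must beat $\ell_{2q-1}^{-1}=\lambda_{2q-1}^{50/\min(1-2/p,\,2/p)}$. My first attempt would be to use the $\lambda_{2q}^{-2}$ from the time integral against the $\lambda_{2q}^2$ amplitude, together with the lower bound on $b$, to verify that $rb$ exceeds the exponent of $\ell_{2q-1}^{-1}$ by a wide margin. If that fails, I would instead use an extra $\lambda_{2q}^{-1}$ coming from the divergence structure, writing the term as $\Div$ of an anti-divergence of the oscillation.
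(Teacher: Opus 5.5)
Your proposal is correct and follows essentially the paper's own proof. The paper also splits $\wpp_{2q}=\wpm_{2q}+\wpr_{2q}$ and puts the non-resonant gradient from Lemma~\ref{Betrimi} into $P^{(1)}_{2q}$, with the derivative-on-amplitude terms in $E^{(1)}_{2q}$. It applies Lemma~\ref{first S} to the resonant part to obtain the gradient $\nabla\bigl(2000C_0\lambda_{2q}^2e^{-2\lambda_{2q}^2t}\chi_{2q-1}^2\eta_{2q-1}^2\bigr)$ plus $2\lambda_{2q}^2e^{-2\lambda_{2q}^2t}\wpp_{2q-1}$, which is cancelled by $\partial_t\wss_{2q}$. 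The mollification error and the $\PP\phi^2_{k,2q}$ part go into $E^{(2)}_{2q}$, and the $\wpr_{2q}$ cross terms together with $e^{-2\lambda_{2q}^2t}\partial_t\wpp_{2q-1}$ go into $E^{(3)}_{2q}$. The only cosmetic difference is that the paper replaces $a^2_{(k,2q)}$ by its unmollified target before, rather than after, splitting off $\PP\phi^2_{k,2q}$.

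The $\lambda_{2q}^{-r}$ issue you flag is also present in the paper's bound for $E^{(2)}_{2q}$. It is settled by choosing $b$ large in terms of the already fixed $r$; the explicit lower bound on $b$ in Section~\ref{para} does not by itself guarantee this. Your fallback would not help here, since that oscillation lives at frequency $\lambda_{2q}^{r}$ and no $\lambda_{2q}^{-1}$ gain is available from it.
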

\begin{proof}The proof is essentially the same as that of \cite[Proposition 3.5]{MNY3}. For the convenience of the reader, we give a brief proof here.

Thanks to \eqref{dec-wh}, one has
\begin{align}\label{dec}
    \Div(\wpp_{2q} \otimes \wpp_{2q})
   & = \Div(\wpm_{2q} \otimes \wpm_{2q})
       + \Div\bigl( \wpm_{2q} \otimes \wpr_{2q}
                   + \wpr_{2q} \otimes \wpm_{2q}
                   + \wpr_{2q} \otimes \wpr_{2q} \bigr)
\end{align}
From Lemmas \ref{Betrimi} and \ref{first S}, a direct computation gives
\begin{align}\label{dec-1}
&\Div(\wpm_{2q}\otimes\wpm_{2q})=\nabla P^{(1)}_{2q} + E^{(1)}_{2q}
   + \lambda^2_{2q} e^{-2\lambda^2_{2q}t}
     \Div\Bigl( \sum_{k \in \Lambda}
            \chi^2_{2q-1} a^2_{(k,2q)} \phi^2_{k,2q}
            \kp \otimes \kp \Bigr),
            \end{align}
where
\begin{align}
P^{(1)}_{2q}=\Bigl( \sum_{\substack{k'+k\neq0\notag  \\ k,k'\in\Lambda}}
        \frac{1}{2} \lambda^2_{2q} e^{-2\lambda^2_{2q}t}
        \chi^2_{2q-1}\eta^2_{2q-1} a_{(k,2q)} a_{(k',2q)}
        \phi_{k,2q} \phi_{k',2q}
        \bigl( \kp \cdot {k'}^{\perp}- 1 \bigr)
        e^{\mathrm{i}\lambda_{2q}(k'+k)\cdot x} \Bigr) \notag
        \end{align}
        and
    \begin{align}
&E^{(1)}_{2q}=  \mathbb{P}_{\neq 0} \Bigl( \sum_{\substack{k'+k\neq0 \\ k,k'\in\Lambda}}
        \Bigl[ \frac{1}{2} \nabla\bigl( a_{(k,2q)} a_{(k',2q)}
               \chi^2_{2q-1}\eta^2_{2q-1} \phi_{k,2q} \phi_{k',2q} \bigr)
               \bigl( 1- \kp \cdot {k'}^{\perp}\bigr) \notag \\
&\qquad\qquad\qquad\quad
        -\Div \bigl( a_{(k,2q)} a_{(k',2q)}
          \phi_{k,2q} \phi_{k',2q} {k'}^{\perp} \otimes \kp \bigr) \Bigr]
        \lambda^2_{2q} e^{-2\lambda^2_{2q}t}
        e^{\mathrm{i}\lambda_{2q}(k'+k)\cdot x} \Bigr) .\label{def-E1}
\end{align}
By  Lemma \ref{first S} and $\mathbb{P}_{=0}(\phi^2_{k,2q})=1$), we further have
\begin{align}\label{dec-2}
\lambda^2_{2q} e^{-2\lambda^2_{2q}t}
   \Div\Bigl( \sum_{k \in \Lambda}
          \chi^2_{2q-1}\eta^2_{2q-1} a^2_{(k,2q)} \phi^2_{k,2q}
          \kp \otimes \kp \Bigr) =\nabla P^{(2)}_{2q}+E^{(2)}_{2q}+ 2\lambda^2_{2q} e^{-2\lambda^2_{2q}t} \wpp_{2q-1}
          \end{align}
with
\begin{align*}
    P^{(2)}_{2q}=2000C_0 \lambda^2_{2q} e^{-2\lambda^2_{2q}t} \chi^2_{2q-1}\eta^2_{2q-1}
\end{align*}
and
\begin{align}
E^{(2)}_{2q}=& \lambda^2_{2q} e^{-2\lambda^2_{2q}t}
   \sum_{k \in \Lambda} \Div\Bigl(
   \chi^2_{2q-1}\eta^2_{2q-1} \phi^2_{k,2q}
        \Bigl[ a^2_{(k,2q)}
             - 2000C_0a^2_k \bigl( \mathrm{Id}
               + \tfrac{\mathcal{R}\wpp_{2q-1}}{1000C_0} \bigr) \Bigr]
        \kp \otimes \kp \Bigr) \notag\\
&\quad + \lambda^2_{2q} e^{-2\lambda^2_{2q}t}
   \sum_{k \in \Lambda} \Div\Bigl(
        2000C_0 \chi^2_{2q-1}\eta^2_{2q-1}
        \bigl( \mathbb{P}_{\neq 0} \phi^2_{k,2q} \bigr)
        a^2_k \bigl( \mathrm{Id} + \tfrac{\mathcal{R}\wpp_{2q-1}}{1000C_0} \bigr)
        \kp \otimes \kp \Bigr).\label{def-E2}
\end{align}
The equalities \eqref{dec}--\eqref{dec-2} together  with 
\begin{align*}
2\lambda^2_{2q} e^{-2\lambda^2_{2q}t} \wpp_{2q-1}+\partial_t\wss_{2q}=\partial_t\wpp_{2q-1}e^{-2\lambda^2_{2q}t}
\end{align*}
show \eqref{decom-wpwp} with $P_{2q}=P^{(1)}_{2q}+P^{(2)}_{2q}$ and
\begin{align}\label{def-E}
    E_{2q}=E^{(1)}_{2q}+E^{(2)}_{2q}+E^{(3)}_{2q},
\end{align}
where $E^{(1)}_{2q}$,  $E^{(2)}_{2q}$ are defined in \eqref{def-E1} and \eqref{def-E2}, respectively, and
\begin{align}\label{def-E3}
    E^{(3)}_{2q}=\Div\bigl( \wpm_{2q} \otimes \wpr_{2q}
                   + \wpr_{2q} \otimes \wpm_{2q}
                   + \wpr_{2q} \otimes \wpr_{2q} \bigr)+\partial_t\wpp_{2q-1}e^{-2\lambda^2_{2q}t}.
\end{align}
Hence, we complete the proof of Proposition \ref{prop-E}.
\end{proof}
\subsection{Construction of $\wmhd_{2q}$ and  $\dmhd_{2q}$}\label{Con-wmhd} 
Suppose that \((u_{2q-2}, B_{2q-2})\) is a global solution to the MHD equations \eqref{MHD}. We expect that after adding the increments, the resulting pair remains a solution; specifically, let  
\[
u_{2q} = u_{2q-2} + \wpp_{2q} + \wss_{2q} + \wmhd_{2q}, \qquad 
B_{2q} = B_{2q-2} + \dmhd_{2q}.
\]
Then, in order for \((u_{2q}, B_{2q})\) to satisfy the MHD equations, using Proposition \ref{prop-E}, we impose that \((\wmhd_{2q}, \dmhd_{2q})\) satisfy the following equations.
\begin{equation} \label{e:wt}\tag{FMHD}
\left\{ \begin{alignedat}{-1}
&\del_t \wmhd_{2q}-\Delta \wmhd_{2q}+  (u_{2q-2} +\wpp_{2q}+\wss_{2q}+\wmhd_{2q} )\cdot\nabla \wmhd_{2q} +\wmhd_{2q}\cdot\nabla(u_{2q-2}+\wpp_{2q}+\wss_{2q}) \\
&\qquad\qquad\qquad\quad= -\nabla p^{(\textup{m})}_{2q } +\dmhd_{2q}\cdot\nabla \dmhd_{2q}+B_{2q-2}\cdot\nabla \dmhd_{2q} + \dmhd_{2q}\cdot\nabla B_{2q-2}   -E_{2q }-F_{2q},
\\
&\partial_t \dmhd_{2q}-\Delta \dmhd_{2q}+  (u_{2q-2}+\wpp_{2q}+\wss_{2q}+\wmhd_{2q})\cdot\nabla \dmhd_{2q}
+\wmhd_{2q}\cdot\nabla B_{2q-2} \\
&\qquad\qquad\qquad\qquad\qquad=\dmhd_{2q}\cdot\nabla (u_{2q-2}+\wpp_{2q}+\wss_{2q}+\wmhd_{2q})  +B_{2q-2}\cdot\nabla \wmhd_{2q}-G_{2q} ,\\
 & \Div \wmhd_{2q}=\Div \dmhd_{2q}= 0,
  \\
  & (\wmhd_{2q},\dmhd_{2q}) |_{t=0}=  (0,\lambda^{-50}_{2q-1}f(x) ) ,
\end{alignedat}\right.
\end{equation}
where $E_{2q}$ is given by \eqref{decom-wpwp}, $F_{2q}$ is defined by
\begin{align}
F_{2q}=&(\partial_t \wpp_{2q} -\Delta \wpp_{2q})-\Delta \wss_{2q} \notag \\
&+\Div\Big(\wpp_{2q} \otimes( \wss_{2q}+u_{2q-2}) +  (\wss_{2q}+u_{2q-2})\otimes  \wpp_{2q} \notag \\
&+  \wss_{2q} \otimes  u_{2q-2}+u_{2q-2} \otimes  \wss_{2q}+ \wss_{2q} \otimes  \wss_{2q}\Big).\label{def-F}
\end{align}
and
\begin{align}\label{def-G}
  G_{2q}:=( \wpp_{2q}+\wss_{2q})\cdot\nabla B_{2q-2}-B_{2q-2}\cdot\nabla( \wpp_{2q}+\wss_{2q}).   
 \end{align}
In fact, for such \(\wpp_{2q}, \wss_{2q}\) and the given initial conditions, the forced MHD equations \eqref{e:wt} are globally well-posed. To establish this result, we first provide some estimates for the forcing terms \(E_{2q}\), \(F_{2q}\), and \(G_{2q}\).
\begin{proposition}[Estimates for $E_{2q}$]\label{E2q}Let $E_{2q}$ be as defined in \eqref{decom-wpwp}. For the fixed $2<p<\infty$ given in Proposition~\ref{iteration}, we have
\begin{align}\label{es-E}
 \|E_{2q}\|_{L^1_t (\dot B^{-1+\frac{2}{p}}_{p,1}\cap \dot B^{-\frac{2}{p}}_{2,1}\cap B^{-1}_{1,1})} \lesssim C^3_0\lambda^{-90}_{2q-1},  \quad \|E_{2q}\|_{L^1_t \dot B^{0}_{\infty,1}} \le C^3_0\lambda^{r}_{2q}.
\end{align}
\end{proposition}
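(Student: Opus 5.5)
We estimate the three pieces $E^{(1)}_{2q}$, $E^{(2)}_{2q}$, $E^{(3)}_{2q}$ from \eqref{def-E} separately. The low-regularity norms $\dot B^{-1+\frac2p}_{p,1}$, $\dot B^{-\frac2p}_{2,1}$, $B^{-1}_{1,1}$ are all dominated, up to constants, by the $L^1_t\dot B^{-1}_{\infty,1}$-type size of the term times the volume of its support. Every term is supported in the fixed square $[-\pi,\pi]^2$ because of $\eta_{2q-1}$. So we may interpolate:
\[
\|g\|_{\dot B^{s}_{m,1}}\lesssim \|g\|_{\dot B^{-1-\delta}_{\infty,\infty}}^{\theta}\|g\|_{\dot B^{1}_{\infty,\infty}}^{1-\theta}
\]
and then use $\|g\|_{L^m}\lesssim\|g\|_{L^\infty}$ for compactly supported $g$. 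In practice we will write each error in the form $\Div(\text{stuff})$ or $f e^{\ii\lambda_{2q}\xi\cdot x}$ and use Proposition \ref{Est-fe} together with its higher-order analogue. That analogue comes from integrating by parts $K$ times, which gains $\lambda_{2q}^{-1}$ per step at the cost of $\ell_{2q-1}^{-1}$ or $\lambda_{2q}^{r}$ derivatives on the amplitude. The time integral of $\lambda_{2q}^2e^{-2\lambda_{2q}^2t}$ is $O(1)$, so every factor $\lambda_{2q}^2e^{-2\lambda^2_{2q}t}$ in front costs nothing in $L^1_t$.

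\textbf{The term $E^{(1)}_{2q}$.} The phases $e^{\ii\lambda_{2q}(k+k')\cdot x}$ with $k+k'\neq0$ have frequency $\gtrsim\lambda_{2q}$. The amplitudes $a_{(k,2q)}a_{(k',2q)}\chi^2_{2q-1}\eta^2_{2q-1}\phi_{k,2q}\phi_{k',2q}$ have $C^N$ norms $\lesssim C_0\lambda_{2q}^{rN}$, using $\ell_{2q-1}^{-1}\ll\lambda_{2q}^{r}$ because $b$ is large. Repeated integration by parts with $K$ large therefore gives $\dot B^{-1}$-type bounds of order $C_0\lambda_{2q}^{2-K(1-r)}$. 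This is far below $\lambda_{2q-1}^{-90}$, since $\lambda_{2q}=\lambda_{2q-1}^{b}$.

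\textbf{The term $E^{(2)}_{2q}$.} It has two parts.
\begin{itemize}
\item The first part involves the mollification error $a^2_{(k,2q)}-2000C_0a_k^2(\cdots)$. This is controlled by $\ell_{2q-1}\|\mathcal R\wpp_{2q-1}\|_{C^1_{t,x}}\lesssim \ell_{2q-1}\lambda_{2q-1}^{2}C_0$, using \eqref{Rwj}. Because $\ell_{2q-1}\le\lambda_{2q-1}^{-50/(1-2/p)}$ (and the analogous $2/p$ exponent), this loses at most one derivative $\lambda_{2q}^{r}$ through the divergence. Since $\lambda^2_{2q}e^{-2\lambda_{2q}^2t}\in L^1_t$, we get a bound of roughly $C_0^2\lambda_{2q}^{r}\lambda_{2q-1}^{2}\ell_{2q-1}$ in $L^1_t\dot B^0_{\infty,1}$. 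Interpolation then gives the $\lambda_{2q-1}^{-90}$ bound. The definition of $\ell$ with exponents $50/(1-2/p)$ and $50/(2/p)$ is tailored exactly so that the powers lost in passing to $\dot B^{-1+2/p}_{p,1}$ and $\dot B^{-2/p}_{2,1}$ are compensated.
\item The second part involves $\mathbb P_{\neq0}\phi^2_{k,2q}$, which oscillates at frequency $\lambda_{2q}^{r}$ in the direction $k$. Writing it as a derivative of a periodic antiderivative gains $\lambda_{2q}^{-r}$ per integration by parts. Since $\lambda_{2q}^{r}\gg\ell_{2q-1}^{-1},\lambda_{2q-1}$, repeating this many times gives an arbitrarily large negative power of $\lambda_{2q}$.
\end{itemize}

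\textbf{The term $E^{(3)}_{2q}$.} The quadratic cross terms are handled with Proposition \ref{Est-wpws}. Since $\wpm\otimes\wpr$ is a sum of products of high-frequency modes, we again separate the resonant part ($k+k'=0$) from the nonresonant part.
\begin{itemize}
\item The resonant part is slowly varying, of size $e^{-2\lambda_{2q}^2t}\lambda_{2q}^{1+r}$ with gradient $\lambda_{2q}^{r}$. After the divergence and time integration it is $O(\lambda_{2q}^{-1+2r})$.
\item The nonresonant part is again killed by integration by parts.
\end{itemize}
The term $\partial_t\wpp_{2q-1}e^{-2\lambda_{2q}^2t}$ has $L^1_t$ size at most $\lambda_{2q}^{-2}\cdot\lambda_{2q-1}^{4}$ in $\dot B^{-1}$-type norms, by \eqref{Rwj} and \eqref{wp-wsL1}, which is tiny.

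\textbf{The $\dot B^0_{\infty,1}$ bound.} It only requires a crude estimate. The worst term is the divergence of the resonant part of $E^{(3)}$, together with $E^{(2)}$, each giving $C_0^3\lambda_{2q}^{r}$ after time integration.

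\textbf{Main obstacle.} The hardest step is the mollification error in $E^{(2)}_{2q}$. It is the only term without extra oscillation, so its smallness must come purely from $\ell_{2q-1}$ versus the growth of $\partial_t\mathcal R\wpp_{2q-1}\sim\lambda_{2q-1}^{2}$. We must also verify carefully that the negative-regularity $L^p$ and $L^2$ Besov norms (not just $L^\infty$ ones) inherit this smallness. For this we will first try the support-volume argument above, combined with the interpolation of $\dot B^{s}_{m,1}$ between $\dot B^{-1}_{m,\infty}$ and $\dot B^{1}_{m,\infty}$.
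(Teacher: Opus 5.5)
You have the paper's overall structure: split $E_{2q}$ into $E^{(1)}_{2q},E^{(2)}_{2q},E^{(3)}_{2q}$, bound each in $L^1_t\dot B^{-1}_{\infty,\infty}$ and in a higher $L^\infty$-based norm, interpolate, and use the compact support. The gap is in the step you flag as the main obstacle, the mollification part of $E^{(2)}_{2q}$. There you let the outer divergence fall on $\phi^2_{k,2q}$, losing a factor $\lambda^r_{2q}$, and assert that $\ell_{2q-1}$ compensates. It cannot.

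The $\mathbb{P}_{\neq 0}\phi^2_{k,2q}$ part of $E^{(2)}_{2q}$ lives at frequency $\lambda^r_{2q}$ with size $\lambda_{2q-1}$. So it is only $\lambda_{2q}^{-r(1-\frac2p)}\lambda_{2q-1}$ in $\dot B^{-1+\frac2p}$ and $\lambda_{2q}^{-\frac{2r}{p}}\lambda_{2q-1}$ in $\dot B^{-\frac2p}$. Making these $\lesssim\lambda^{-90}_{2q-1}$ forces $\lambda^r_{2q}=\lambda^{br}_{2q-1}\ge\ell^{-1}_{2q-1}$, and you assume $\lambda^r_{2q}\gg\ell^{-1}_{2q-1}$ yourself. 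Your bound $C_0^2\lambda^r_{2q}\lambda^2_{2q-1}\ell_{2q-1}$ is then $\gtrsim\lambda^2_{2q-1}$. Interpolating it against the $\dot B^{-1}$ bound $\ell_{2q-1}\lambda^2_{2q-1}$ gives $\ell_{2q-1}\lambda^2_{2q-1}\lambda_{2q}^{\frac{2r}{p}}$ and $\ell_{2q-1}\lambda^2_{2q-1}\lambda_{2q}^{r(1-\frac2p)}$. Since $\ell_{2q-1}$ is a fixed, $b$-independent power of $\lambda_{2q-1}$, one of these is always $\gtrsim\lambda^{-48}_{2q-1}$ once $\lambda^r_{2q}\ge\ell^{-1}_{2q-1}$, and it worsens as $b$ grows. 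This route cannot give $\lambda^{-90}_{2q-1}$.

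The missing observation is the Mikado structure. We have $\Div(g\,k^{\perp}\otimes k^{\perp})=(k^{\perp}\cdot\nabla g)\,k^{\perp}$, and $k^{\perp}\cdot\nabla\phi_{k,2q}=0$ because $\phi_{k,2q}$ depends only on $k\cdot x$. So in both parts of $E^{(2)}_{2q}$ the derivative falls only on $\chi^2_{2q-1}\eta^2_{2q-1}$ and the amplitude. The mollification part is then $O(\ell_{2q-1}\lambda^2_{2q-1})$ in $L^1_t\dot B^{-1}_{\infty,\infty}$ and $O(\ell_{2q-1}\lambda^3_{2q-1})$ in $L^1_tL^\infty$, with no $\lambda_{2q}$ anywhere. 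You should interpolate between $\dot B^{-1}_{\infty,\infty}$ and $\dot B^{0}_{\infty,\infty}$, never through $\dot B^0_{\infty,1}$ or $\dot B^1_{\infty,\infty}$, which do see $\lambda^r_{2q}$. This is the paper's bookkeeping: its $L^1_t\dot B^0_{\infty,\infty}$ bound for $E^{(2)}_{2q}$ is $C_0\lambda_{2q-1}$, and $\dot B^1_{\infty,\infty}$ enters only the $\dot B^0_{\infty,1}$ estimate.

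Relatedly, your claim that repeated integration by parts gives arbitrarily negative powers in $\dot B^{-1}$-type norms is false. A term at frequency $\mu$ has $\dot B^{s}$ norm comparable to $\mu^{s}$ times its size, so Proposition~\ref{Est-fe} is already sharp. Thus $E^{(1)}_{2q}$ and $E^{(3)}_{2q}$ are only $O(\lambda^{-1+r}_{2q})$ in $L^1_t\dot B^{-1}_{\infty,\infty}$, hence $O(\lambda^{s+r}_{2q})$ in $\dot B^{s}$ for $s=-1+\frac2p$ and $s=-\frac2p$. This suffices only because $b$ is large and $r<\frac2p$, the latter being implicit in the paper's bound $\lambda^{-\frac2p+r}_{2q}$. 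Likewise the $\mathbb{P}_{\neq 0}\phi^2_{k,2q}$ term gains just $\lambda^{-r}_{2q}$. That limited gain is exactly what forces $\lambda^r_{2q}$ to be large and breaks your mollification estimate.
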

\begin{proof}Following \eqref{def-E}, we provide estimates for $E^{(1)}_{2q}, E^{(2)}_{2q}$ and $E^{(3)}_{2q}$, respectively. For $E^{(1)}_{2q}$, by Proposition \ref{Est-fe} and  \eqref{est-aphi}, we have
\begin{align}
  \|E^{(1)}_{2q}\|_{L^1_t \dot{B}^{-1}_{\infty,\infty}}
    &\lesssim \Bigl( \sum_{\substack{k'+k\neq0 \notag\\ k,k'\in\Lambda}}
               \lambda^{-1}_{2q}
               \|\chi^2_{2q-1}\eta^2_{2q-1} a_{(k,2q)} a_{(k',2q)}
                 \phi_{k,2q} \phi_{k',2q}\|_{L^\infty_t C^1} \notag\\
    &\qquad + \sum_{\substack{k'+k\neq0 \\ k,k'\in\Lambda}}
               \lambda^{-2}_{2q}
               \|\chi^2_{2q-1}\eta^2_{2q-1} a_{(k,2q)} a_{(k',2q)}
                 \phi_{k,2q} \phi_{k',2q}\|_{L^\infty_t C^2}\notag\\
                 &\qquad + \sum_{\substack{k'+k\neq0 \\ k,k'\in\Lambda}}
               \lambda^{-3}_{2q}
               \|\chi^2_{2q-1}\eta^2_{2q-1} a_{(k,2q)} a_{(k',2q)}
                 \phi_{k,2q} \phi_{k',2q}\|_{L^\infty_t C^3}\Bigr)\notag\\
    &\lesssim C_0(\lambda^{-1}_{2q}\lambda^{r}_{2q} + \lambda^{-2}_{2q}\lambda^{2r}_{2q}+\lambda^{-3}_{2q}\lambda^{3r}_{2q})\lesssim C_0\lambda^{-1+r}_{2q}\label{E1-B-1}
\end{align}
An direct computation yields that
\begin{align}
    \|E^{(1)}_{2q}\|_{L^1_t \dot{B}^{1}_{\infty,\infty}}
    &\lesssim \sum_{\substack{k'+k\neq0 \\ k,k'\in\Lambda}}
               \lambda_{2q}
               \|\chi^2_{2q-1}\eta^2_{2q-1} a_{(k,2q)} a_{(k',2q)}
                 \phi_{k,2q} \phi_{k',2q}\|_{L^\infty_t C^1} \notag\\
    &\quad + \sum_{\substack{k'+k\neq0 \\ k,k'\in\Lambda}}
               \|\chi^2_{2q-1}\eta^2_{2q-1} a_{(k,2q)} a_{(k',2q)}
                 \phi_{k,2q} \phi_{k',2q}\|_{L^\infty_t C^2} \notag\\
    &\lesssim \sum_{\substack{k'+k\neq0 \\ k,k'\in\Lambda}}(C_0 \lambda^{1+r}_{2q}+C_0\lambda^{2r}_{2q})\lesssim C_0\lambda^{1+r}_{2q}.\label{E1-B1}
\end{align}
By the definition of $a_{(k,q)}$ in \eqref{def-akq} and \eqref{Rwj}, one has 
{\begin{align*}
&\|a^2_{(k,q)}-2000C_0a^2_k\big({\rm Id}+\tfrac{ \mathcal{R}\wpp_{2q-1}
 }{  1000C_0}\big)\|_{L^\infty_t L^\infty}
 \lesssim C_0\|a_k\big({\rm Id}+\tfrac{ \mathcal{R}\wpp_{2q-1}
 }{  1000C_0}\big)\|_{C^1_{t,x}}\\
 \lesssim& C_0(\ell^2_{2q-1}\|\partial_t\partial_x\mathcal{R}\wpp_{2q-1}\|_{L^\infty_tL^\infty}+\ell^2_{2q-1}\|\partial_t\mathcal{R}\wpp_{2q-1}\|_{L^\infty_tL^\infty}\|\partial_x\mathcal{R}\wpp_{2q-1}\|_{L^\infty_tL^\infty})\\
 \lesssim& C^{5/2}_0\ell^2_{2q-1}\lambda^{3}_{2q-1}.
\end{align*}}
Using this estimate, together with  Proposition \ref{Est-fe}, $k\perp\kp$ and \eqref{Rwj}, we bound $E^{(2)}_{2q}$ in \eqref{def-E2} as
\begin{align}
    \|E^{(2)}_{2q}\|_{L^1_t \dot{B}^{-1}_{\infty,\infty}}
    &\lesssim  \bigl\| \chi^2_{2q-1}\eta^2_{2q-1} \bigl[
        a^2_{(k,q)}-2000C_0a^2_k\big({\rm Id}+\tfrac{ \mathcal{R}\wpp_{2q-1}
 }{  1000C_0}\big) \bigr] \bigr\|_{L^\infty_{t} L^\infty}\notag \\
    &\quad +C_0(\lambda^{-r}_{2q}\|\chi^2_{2q-1}\eta^2_{2q-1}a_k\big({\rm Id}+\tfrac{\mathcal{R}\wpp_{2q-1}}{1000C_0}\big)\|_{L^\infty_tC^1}\notag\\
    &\quad+\lambda^{-2r}_{2q} \|\chi^2_{2q-1}\eta^2_{2q-1}a_k\big({\rm Id}+\tfrac{\mathcal{R}\wpp_{2q-1}}{1000C_0}\big)\|_{L^\infty_tC^2}\notag\\
    &\quad +\lambda^{-3r}_{2q}\|\chi^2_{2q-1}\eta^2_{2q-1}a_k\big({\rm Id}+\tfrac{\mathcal{R}\wpp_{2q-1}}{1000C_0}\big)\|_{L^\infty_tC^3})\notag\\
    &\lesssim C^{5/2}_0 \ell^2_{2q-1} \lambda^{3}_{2q-1}+C_0\lambda^{-r}_{2q}\lambda_{2q-1}\lesssim C^{5/2}_0 \ell^2_{2q-1} \lambda^{3}_{2q-1},\label{E2-B-1}
\end{align}
\begin{align}
    \|E^{(2)}_{2q}\|_{L^1_t \dot{B}^{0}_{\infty,\infty}}
    \lesssim&  \bigl\| \chi^2_{2q-1}\eta^2_{2q-1} 
          \bigl[
        a^2_{(k,q)}-2000C_0a^2_k\big({\rm Id}+\tfrac{ \mathcal{R}\wpp_{2q-1}
 }{  1000C_0}\big) \bigr]\bigr\|_{L^\infty_{t} C^1}\notag \\
    &+C_0\|\chi^2_{2q-1}\eta^2_{2q-1}a_k\big({\rm Id}+\tfrac{\mathcal{R}\wpp_{2q-1}}{1000C_0}\big)\|_{L^\infty_tC^1}
    \lesssim C_0 \lambda_{2q-1}\label{E2-B0}
\end{align}
and 
\begin{align}\label{E2-B1}
    \|E^{(2)}_{2q}\|_{L^1_t \dot{B}^{1}_{\infty,\infty}}
    \lesssim& C_0 \bigl\| \chi^2_{2q-1}\eta^2_{2q-1} a^2_k
          \bigl( \mathrm{Id} + \tfrac{\mathcal{R}\wpp_{2q-1}}{1000C_0} \bigr)
          \bigr\|_{L^\infty_{t} C^2}\notag\\
          &+ C_0 \lambda^{r}_{2q}
          \bigl\| \chi^2_{2q-1}\eta^2_{2q-1} a_k
          \bigl( \mathrm{Id} + \tfrac{\mathcal{R}\wpp_{2q-1}}{1000C_0} \bigr)
          \bigr\|_{L^\infty_{t}C^1} \notag\\
    \lesssim& C^3_0\lambda^2_{2q-1} +C^2_0\lambda^{r}_{2q}\lambda_{2q-1}\lesssim C^2_0\lambda^{r}_{2q}\lambda_{2q-1}.
\end{align}
For $E^{(3)}_{2q}$ in \eqref{def-E3}, we deduce from \eqref{Rwj} and \eqref{E-wpm-wpr} that
\begin{align}
    \|E^{(3)}_{2q}\|_{L^1_t \dot{B}^{-1}_{\infty,\infty}
   }\lesssim&\|\partial_t(\Div \mathcal R \wpp_{2q-1} )e^{-2\lambda^2_{2q}t}\|_{L^1_t \dot{B}^{-1}_{\infty,\infty}}+\|(\wpm_{2q},\wpr_{2q})\|_{L^1_tL^\infty}\|\wpr_{2q}\|_{L^\infty_tL^\infty}\notag\\
    \lesssim &\|e^{-2\lambda^2_{2q}t}\|_{L^1_t}\|\partial_t \mathcal R \wpp_{2q-1} \|_{L^\infty_t L^\infty}+C^{3/2}_0\lambda^{-1+r}_{2q}\notag\\
    \lesssim& C^{3/4}_0\lambda^{-2}_{2q}\lambda^{2}_{2q-1}+C^{3/2}_0\lambda^{-1+r}_{2q}\lesssim C^{3/2}_0\lambda^{-1+r}_{2q}\label{E3-B-1}
\end{align}
and 
\begin{align}
    \|E^{(3)}_{2q}\|_{L^1_t \dot{B}^{1}_{\infty,\infty}}\lesssim&\|\partial_t(\Div \mathcal R \wpp_{2q-1} )e^{-2\lambda^2_{2q}t}\|_{L^1_t \dot{B}^{1}_{\infty,\infty}}\notag\\
&+\|(\wpm_{2q},\wpr_{2q})\|_{L^1_tL^\infty}\|\wpr_{2q}\|_{L^\infty_tC^2}+\|(\wpm_{2q},\wpr_{2q})\|_{L^1_tC^2}\|\wpr_{2q}\|_{L^\infty_{t,x}}\notag\\
    \lesssim &\|e^{-2\lambda^2_{2q}t}\|_{L^1_t}\|\partial_t \mathcal R \wpp_{2q-1} \|_{L^\infty_t C^2}+C^{3/2}_0\lambda^{1+r}_{2q}\notag\\
    \lesssim & C^{3/4}_0\lambda^{-2}_{2q}\lambda^{4}_{2q-1}+C^{3/2}_0\lambda^{1+r}_{2q}\lesssim C^{3/2}_0\lambda^{1+r}_{2q}.\label{E3-B1}
\end{align}
Collecting \eqref{E1-B-1}-- \eqref{E3-B1} together shows
\begin{align}
    &\|E^{(1)}_{2q}\|_{L^1_t \dot B^{0}_{\infty,1}} \lesssim \|E^{(1)}_{2q}\|^{1/2}_{L^1_t \dot B^{-1}_{\infty,\infty}} \|E^{(1)}_{2q}\|^{1/2}_{L^1_t \dot B^{1}_{\infty,\infty}} \lesssim C_0\lambda^r_{2q},\label{E1-B0}\\
       &\|E^{(2)}_{2q}\|_{L^1_t \dot B^{0}_{\infty,1}} \lesssim \|E^{(2)}_{2q}\|^{1/2}_{L^1_t \dot B^{-1}_{\infty,\infty}} \|E^{(2)}_{2q}\|^{1/2}_{L^1_t \dot B^{1}_{\infty,\infty}} \lesssim C_0\ell_q\lambda^2_{2q-1}\lambda^r_{2q},\label{E2-B01}\\
       &\|E^{(3)}_{2q}\|_{L^1_t \dot B^{0}_{\infty,1}} \lesssim \|E^{(3)}_{2q}\|^{1/2}_{L^1_t \dot B^{-1}_{\infty,\infty}} \|E^{(3)}_{2q}\|^{1/2}_{L^1_t \dot B^{1}_{\infty,\infty}} \lesssim C^2_0\lambda^r_{2q}.\label{E3-B0}
\end{align}
Using \eqref{E1-B-1}, \eqref{E2-B0}, \eqref{E3-B-1}, \eqref{E1-B0} and \eqref{E3-B0}, one has
\begin{align*}
 \|E_{2q}\|_{L^1_t \dot B^{-1+\frac{2}{p}}_{\infty,1}} \lesssim &\sum_{i=1}^3\|E^{(i)}_{2q}\|^{1-\frac{2}{p}}_{L^1_t \dot B^{-1}_{\infty,\infty}} \|E^{(i)}_{2q}\|^{\frac{2}{p}}_{L^1_t \dot B^{0}_{\infty,\infty}}\\
 \lesssim &C^2_0\lambda^{-1+\frac{2}{p}+r}_{2q}+C^3_0\ell^{2(1-\frac{2}{p})}_{2q-1}\lambda^{3-\frac{4}{p}}_{2q-1}\lesssim C^3_0\lambda^{-90}_{2q-1}
\end{align*}
where the last inequality holds for $\ell_{2q-1}$ defined by \eqref{def-ell}. Since  $E_{2q}$ has compact support and $2<p<\infty$,  we have
\begin{align}\label{E-Bp}
  \|E_{2q}\|_{L^1_t\dot{B}^{ -1+\frac{2}{p}}_{p,1}}
  \lesssim&\|E_{2q}\|_{L^1_t\dot B^{-1+\frac{2}{p}}_{\infty ,1}}\lesssim  C^3_0\lambda^{-90}_{2q-1}.
\end{align}
and
 \begin{align}
  \|E_{2q}\|_{L^1_t\dot{B}^{ -\frac{2}{p}}_{2,1}}
  \lesssim&\|E_{2q}\|_{L^1_t\dot B^{-\frac{2}{p}}_{\frac{2p}{p-2},1}}\lesssim  \|E_{2q}\|_{L^1_t\dot B^{-\frac{2}{p}}_{\infty,1}}
  \lesssim\sum_{i=1}^3\|E^{(i)}_{2q}\|^{\frac{2}{p}}_{L^1_t\dot B^{-1}_{\infty,\infty}}\|E^{(i)}_{2q}\|^{1-\frac{2}{p}}_{L^1_t\dot B^{0}_{\infty,\infty}}\notag\\
  \lesssim&C^2\lambda^{-\frac{2}{p}+r}_{2q}+C^3_0\ell^{\frac{4}{p}}_{2q-1}\lambda^{1+\frac{4}{p}}_{2q-1}\lesssim C^3_0\lambda^{-90}_{2q-1}.\label{E-B2}
\end{align}
With the aid of \eqref{E-Bp} and $E_{2q}$ has compact support, we have
\begin{align}\label{E-B_1}
 \|E_{2q}\|_{L^1_t B^{-1}_{1, 1}} \lesssim  \|E_{2q}\|_{L^1_t \dot B^{-1}_{\infty, 1}}\lesssim C^3_0\lambda^{-90}_{2q-1}.
\end{align}
Hence, collecting \eqref{E1-B0}--\eqref{E-B_1}, we  finish the proof of Proposition \ref{E2q}.
\end{proof}
\begin{proposition}[Estimates for $F_{2q}$]\label{Pro-F}Let $F_{2q}$ be given in \eqref{def-F}. For the fixed $2<p<\infty$ given in Proposition~\ref{iteration}, we have
\begin{align}\label{es-F}
 \|F_{2q}\|_{L^1_t (\dot B^{-1+\frac{2}{p}}_{p,1}\cap \dot B^{-\frac{2}{p}}_{2,1}\cap B^{-1}_{1,1})}\lesssim C^2_0(\lambda^{-1+\frac{2}{p}+r}_{2q}+\lambda^{-\frac{2}{p}+r}_{2q}) ,  \quad \|F_{2q}\|_{L^1_t \dot B^{0}_{\infty,1}} \lesssim C^2_0\lambda^r_{2q}  .
\end{align}
\end{proposition}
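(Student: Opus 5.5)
Following the proof of Proposition \ref{E2q}, I will split $F_{2q}$ into three groups and bound each one in $L^1_t\dot B^{-1}_{\infty,\infty}$, $L^1_t\dot B^{0}_{\infty,\infty}$ and $L^1_t\dot B^{1}_{\infty,\infty}$. Interpolation will then give the target norms, and compact support (all terms except those involving $u_{2q-2}$ live in $[-\pi,\pi]^2$) will move from $L^\infty$-based to $L^p$, $L^2$ and $L^1$-based Besov spaces.

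\textbf{The linear heat term.} This is the cancellation step. The factor $e^{-\lambda_{2q}^2t}e^{\ii\lambda_{2q}k\cdot x}$ solves the heat equation, so in $(\partial_t-\Delta)\wpp_{2q}$ every term in which no derivative lands on the amplitude $A_k:=a_{(k,2q)}\chi_{2q-1}\eta_{2q-1}\phi_{k,2q}$ cancels exactly. What remains has three pieces:
\begin{itemize}
\item $\partial_tA_k$, bounded using \eqref{def-akq}, \eqref{Rwj} and $\ell_{2q-1}$ (contributes $\lesssim C_0\lambda_{2q-1}^2$-type factors);
\item $\nabla A_k\cdot\nabla e^{\ii\lambda_{2q}k\cdot x}$, of size $\lambda_{2q}\cdot\lambda_{2q}^{r}$ relative to the $\lambda_{2q}^{-3}\cdot\lambda_{2q}^4$ prefactor;
\item $\Delta A_k$.
\end{itemize}
Each piece keeps the oscillatory factor $e^{\ii\lambda_{2q}k\cdot x}$ and the time weight $e^{-\lambda_{2q}^2t}$, with $\|e^{-\lambda_{2q}^2t}\|_{L^1_t}=\lambda_{2q}^{-2}$. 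Proposition \ref{Est-fe} then gains one more factor $\lambda_{2q}^{-1}$ in $\dot B^{-1}_{\infty,\infty}$. The outcome I expect is
\begin{itemize}
\item $\|(\partial_t-\Delta)\wpp_{2q}\|_{L^1_t\dot B^{-1}_{\infty,\infty}}\lesssim C_0\lambda_{2q}^{-1+r}$,
\item $\|(\partial_t-\Delta)\wpp_{2q}\|_{L^1_t\dot B^{1}_{\infty,\infty}}\lesssim C_0\lambda_{2q}^{1+r}$,
\item $\|(\partial_t-\Delta)\wpp_{2q}\|_{L^1_t\dot B^{0}_{\infty,\infty}}\lesssim C_0\lambda_{2q}^{r}$.
\end{itemize}
For $\Delta\wss_{2q}=\Delta\wpp_{2q-1}e^{-2\lambda_{2q}^2t}$ I will use \eqref{wp-wsL1} directly. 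The weight $\|e^{-(2\lambda_{2q}^2+\lambda_{2q-1}^2)t}\|_{L^1}\le\lambda_{2q}^{-2}$ gives a bound $\lesssim C_0\lambda_{2q}^{-2}\lambda_{2q-1}^{N+3}$, which is negligible.

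\textbf{The interaction terms.} These are the $\Div(\cdot\otimes\cdot)$ terms in \eqref{def-F}. Here I will use $L^1_t$–$L^\infty_t$ Hölder pairings that place the exponential decay on the $\lambda_{2q}$-factor.
\begin{itemize}
\item For $\wpp_{2q}\otimes u_{2q-2}$, I write $\wpp_{2q}=\wpm_{2q}+\wpr_{2q}$. For $\wpm_{2q}\otimes u_{2q-2}$, the product is (amplitude)$\times e^{\ii\lambda_{2q}k\cdot x}$, so I apply Proposition \ref{Est-fe} with $f=A_k u_{2q-2}$. I control $\|u_{2q-2}\|_{C^N}$ by \eqref{u-Linfty}: it is at most $C_0\lambda_{2q-2}^{N}$ in the time-integrated sense, because $u_{2q-2}\in L^\infty_t\dot B^0_{\infty,1}\cap L^1_t\dot B^2_{\infty,1}$. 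The bound then costs at most $\lambda_{2q-2}^2\lambda_{2q}^{-1}$, and since $\lambda_{2q-2}\ll\lambda_{2q}^{r}$ this is fine. The $\wpr_{2q}$ part is already $\lambda_{2q}^{r}$ smaller in amplitude, and its $L^1_t$ norm carries the factor $\lambda_{2q}^{-2}$.
\item The terms $\wpp_{2q}\otimes\wss_{2q}$ are handled the same way.
\item Terms involving only $\wss_{2q}$ and $u_{2q-2}$ all carry $e^{-2\lambda_{2q}^2t}$, giving $\lambda_{2q}^{-2}\times\mathrm{poly}(\lambda_{2q-1})$.
\end{itemize}

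\textbf{Assembly and main obstacle.} Interpolating between $\dot B^{-1}_{\infty,\infty}$ and $\dot B^{1}_{\infty,\infty}$ as in \eqref{E1-B0} gives the $\dot B^0_{\infty,1}$ bound $C_0^2\lambda_{2q}^r$. Interpolating between $\dot B^{-1}_{\infty,\infty}$ and $\dot B^{0}_{\infty,\infty}$ gives $\lambda_{2q}^{-1+\frac2p+r}$ and $\lambda_{2q}^{-\frac2p+r}$, as in \eqref{E-Bp}–\eqref{E-B_1}. The main obstacle is the transfer to the $L^p$, $L^2$ and $L^1$-based spaces for the terms containing $u_{2q-2}$, which need not be compactly supported. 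I will resolve it by noting that each such term is multiplied by $\wpp_{2q}$ or $\wss_{2q}$, both supported in $\spt\eta_{2q-1}\subset[-\pi,\pi]^2$. So every term of $F_{2q}$ is compactly supported in a fixed box, and the embedding argument of \eqref{E-Bp} applies verbatim.
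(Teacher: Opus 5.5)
Your proposal is correct and follows essentially the same route as the paper. The shared steps are:
\begin{itemize}
\item the heat-equation cancellation plus Proposition \ref{Est-fe} for $(\partial_t-\Delta)\wpp_{2q}$ (the paper splits $\wpp_{2q}=\wpm_{2q}+\wpr_{2q}$, while you commute $(\partial_t-\Delta)$ past $\lambda_{2q}^{-3}\Delta\nabla^{\perp}$, which is equivalent);
\item direct bounds from \eqref{u-Linfty}, \eqref{u-L1C1}, \eqref{wp-wsL1} with H\"older in time for $\Delta\wss_{2q}$ and the interaction terms;
\item interpolation to $\dot B^{0}_{\infty,1}$, $\dot B^{-1+\frac2p}_{\infty,1}$ and $\dot B^{-\frac2p}_{\infty,1}$;
\item compact support of every term of $F_{2q}$ to pass to the $L^p$-, $L^2$- and $L^1$-based norms.
\end{itemize}
One point is cosmetic: you invoke Proposition \ref{Est-fe} for $\wpm_{2q}\otimes u_{2q-2}$, but the paper's plain bound $\|\Div(\wpp_{2q}\otimes u_{2q-2})\|_{L^1_t\dot B^{-1}_{\infty,\infty}}\lesssim\|\wpp_{2q}\|_{L^1_tL^\infty}\|u_{2q-2}\|_{L^\infty_{t,x}}$ already suffices. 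It also avoids any need for $L^\infty_tC^2$ control of $u_{2q-2}$, which the induction hypotheses do not provide.
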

\begin{proof}
With the aid of  \eqref{u-Linfty} and Proposition \ref{Est-wpws}, we have
\begin{align}\label{F1}
&\Big \|F_{2q}-\big((\partial_t  \wpp_{2q} -\Delta \wpp_{2q})-\Delta \wss_{2q} \big)\Big\|_{L^1_t\dot{B}^{-1}_{\infty,\infty}}\notag\\
\lesssim&\|\wpp_{2q}\|_{ L^1_{t}L^\infty}
(\|(\wss_{2q},u_{2q-2})\|_{L^\infty_{t}L^\infty }+\|\wss_{2q}\|_{L^2_{t}L^\infty}(\|(\wss_{2q},u_{2q-2})\|_{L^2_{t}L^\infty})\notag\\
\lesssim&C^2_0\lambda^{-1}_{2q}\lambda_{2q-1},
\end{align}
and
\begin{align}
&\Big \|F_{2q}-\big((\partial_t  \wpp_{2q} -\Delta \wpp_{2q})-\Delta \wss_{2q} \big)\Big\|_{L^1_t\dot{B}^{0}_{\infty,1}}\notag\\
\lesssim&\|(\wpp_{2q},\wss_{2q})\|_{ L^2_{t}L^\infty}
\|(\wss_{2q},u_{2q-2})\|_{L^2_{t}\dot B^{1}_{\infty,1}}+\|(\wpp_{2q},\wss_{2q})\|_{ L^1_{t}\dot B^{1}_{\infty,1}}
\|(\wss_{2q},u_{2q-2})\|_{L^\infty_{t,x}}\notag\\
\lesssim&C^2_0\lambda_{2q-2}+C_0^2\lambda^{-1}_{2q}\lambda_{2q-1}\lesssim C^2_0\lambda_{2q-2}.\label{F2-re}
\end{align}    
One easily deduce from \eqref{wp-wsL1} that
\begin{align}
    \|\Delta \wss_{2q}\|_{L^1_t\dot{B}^{-1}_{\infty,\infty}}\lesssim& \| \wss_{2q}\|_{L^1_t\dot{B}^{1}_{\infty,\infty}}
\le C^{3/4}_0\lambda^{-2}_{2q}\lambda^2_{2q-1},
\end{align}
and 
\begin{align}
    \|\Delta \wss_{2q}\|_{L^1_t\dot{B}^{0}_{\infty,1}}\lesssim& \| \wss_{2q}\|_{L^1_t\dot{B}^{2}_{\infty,1}}\lesssim \|\wpp_{2q-1}\|_{L^1_t \dot B^{2}_{\infty,1}}
\le C^{3/4}_0\lambda^{-2}_{2q}\lambda^3_{2q-1}.
\end{align}
Using \eqref{dec-wh}, we rewrite
\begin{align*}
  \partial_t  \wpp_{2q} -\Delta \wpp_{2q}= (\partial_t - \Delta)\wpm_{2q}+ (\partial_t - \Delta)\wpr_{2q}.
\end{align*}
Note that $ (\partial_t - \Delta)(e^{-\lambda^2_{2q}t} e^{\ii\lambda_{2q}k\cdot x})=0$, one infers from \eqref{def-wpm} that
\begin{align*}
 (\partial_t - \Delta)\wpm_{2q}
=& \lambda_{2q }e^{-\lambda^2_{2q }t}\sum_{k\in\Lambda}\big((\partial_t - \Delta)(a_{(k,2q)} \chi_{2q-1}\eta_{2q-1}\phi_{k,2q} ) \big)e^{\ii\lambda_{2q }k\cdot x} \kp\\
&+\ii\lambda^2_{2q }e^{-\lambda^2_{2q }t}\sum_{j=1}^2\sum_{k\in\Lambda}\partial_{x_j}(a_{(k,q)}\chi_{2q-1}\eta_{2q-1}\phi_{k,2q})
e^{\ii\lambda_{2q }k\cdot x}k_j \kp
\end{align*}
Applying Proposition \ref{Est-fe} to the above equality yields
\begin{align*}
& \| (\partial_t - \Delta)\wpm_{2q} \|_{{L}^1_t \dot{B}^{ -1}_{\infty,\infty}}\notag\\
\lesssim&\|\lambda_{2q}e^{-\lambda^2_{2q}t}\|_{L^1_t} \sum_{k\in\Lambda}\big(\lambda^{-1}_{2q} \|   (\partial_t - \Delta)(a_{(k,q)} \chi_{2q-1}\eta_{2q-1}\phi_{k,2q}) \|_{{L}^{\infty}_{t,x}}\notag\\
&+ \lambda^{-2}_{2q} \|   (\partial_t - \Delta)(a_{(k,q)} \chi_{2q-1}\eta_{2q-1}\phi_{k,2q}) \|_{{L}^{\infty}_{t}C^1}+\lambda^{-3}_{2q} \|   (\partial_t - \Delta)(a_{(k,q)} \chi_{2q-1}\eta_{2q-1}\phi_{k,2q}) \|_{{L}^{\infty}_{t}C^3}\big)\notag\\
&+ \|\lambda^2_{2q}e^{-\lambda^2_{2q}t}\|_{L^1_t}\sum_{j=1}^2\sum_{k\in\Lambda}\big(\lambda^{-1}_{2q}\|  \partial_{x_j}(a_{(k,q)}\chi_{2q-1}\eta_{2q-1}\phi_{k,2q})\|_{{L}^{\infty}_{t,x}}\notag\\
&+ \lambda^{-2}_{2q} \|  \partial_{x_j}(a_{(k,q)}\chi_{2q-1}\eta_{2q-1}\phi_{k,2q})\|_{{L}^{\infty}_{t}C^1}+\lambda^{-3}_{2q} \|  \partial_{x_j}(a_{(k,q)}\chi_{2q-1}\eta_{2q-1}\phi_{k,2q})\|_{{L}^{\infty}_{t}C^3}\big)\notag\\
\lesssim &C_0\lambda^{-2+2r}_{2q}\ell^{-1}_{2q-1}+C_0\lambda^{-1+r}_{2q}
\lesssim C_0\lambda^{-1+r}_{2q},
\end{align*}
and
\begin{align*}
 &\| (\partial_t - \Delta)\wpm_{2q} \|_{{L}^1_t \dot{B}^{1}_{\infty,\infty}}\\
 \lesssim&\|\lambda_{2q}e^{-\lambda^2_{2q}t}\|_{L^1_t}\sum_{k\in\Lambda}\big(\|   (\partial_t - \Delta)(a_{(k,q)} \chi_{2q-1}\eta_{2q-1}\phi_{k,2q}) \|_{{L}^{\infty}_{t}C^1}+ \lambda_{2q} \|   (\partial_t - \Delta)(a_{(k,q)} \chi_{2q-1}\eta_{2q-1}\phi_{k,2q}) \|_{{L}^{\infty}_{t,x}}\big)\\
 &+\|\lambda^2_{2q}e^{-\lambda^2_{2q}t}\|_{L^1_t}\sum_{j=1}^2\sum_{k\in\Lambda}(\|  \partial_{x_j}(a_{(k,q)}\chi_{2q-1}\eta_{2q-1}\phi_{k,2q})\|_{{L}^{\infty}_{t}C^1}+ \lambda_{2q}\|  \partial_{x_j}(a_{(k,q)}\chi_{2q-1}\eta_{2q-1}\phi_{k,2q})\|_{{L}^{\infty}_{t,x}})\\
 \lesssim&C_0\lambda^{1+r}_{2q}.
\end{align*}
Hence, we obtain from the two above estimates that
\begin{align*}
 \| (\partial_t - \Delta)\wpm_{2q} \|_{{L}^1_t \dot{B}^{ 0}_{\infty,1}}\lesssim \| (\partial_t - \Delta)\wpm_{2q} \|^{1/2}_{{L}^1_t \dot{B}^{ -1}_{\infty,\infty}}\| (\partial_t - \Delta)\wpm_{2q} \|^{1/2}_{{L}^1_t \dot{B}^{ 1}_{\infty,\infty}}\lesssim C_0\lambda^r_{2q}.
\end{align*}
By the same argument for estimating $\wpr_{2q}$, one infers that
\begin{align}\label{F3}
\| (\partial_t - \Delta)\wpp_{2q} \|_{{L}^1_t \dot{B}^{ -1}_{\infty,\infty}}\lesssim C_0\lambda^{-1+r}_{2q},\quad \| (\partial_t - \Delta)\wpp_{2q} \|_{{L}^1_t \dot{B}^{ 0}_{\infty,1}}\lesssim C_0\lambda^{r}_{2q}.
\end{align}
Hence, we have
\begin{align*}
 \|F_{2q}\|_{L^1_t \dot B^{-1}_{\infty,\infty}} \le C^2_0\lambda^{-1+r}_{2q} ,  \quad \|F_{2q}\|_{L^1_t \dot B^{0}_{\infty,1}} \le C^2_0\lambda^r_{2q}.   
\end{align*}
Note that $F_{2q}$ has compact support, we immediately have
\begin{align*}
 \|F_{2q}\|_{L^1_t(\dot B^{-1+\frac{2}{p}}_{p,1}\cap \dot B^{-\frac{2}{p}}_{2,1}\cap B^{-1}_{1,1})}\lesssim    \|F_{2q}\|_{L^1_t(\dot B^{-1+\frac{2}{p}}_{\infty,1}\cap \dot B^{-\frac{2}{p}}_{\infty,1}\cap \dot B^{-1}_{\infty,1})} \lesssim C^2_0(\lambda^{-1+\frac{2}{p}+r}_{2q}+\lambda^{-\frac{2}{p}+r}_{2q})
\end{align*}
and complete the proof of Proposition \ref{Pro-F}.
\end{proof}
\begin{proposition}[Estimates for $G_{2q}$]\label{G2q}Let $G_{2q}$ be given in \eqref{def-G}. For the fixed $2<p<\infty$ given in Proposition~\ref{iteration}, we have
\begin{align}\label{es-F}
 \|G_{2q}\|_{L^1_t(\dot B^{-1+\frac{2}{p}}_{p,1}\cap \dot B^{-\frac{2}{p}}_{2,1}\cap B^{-1}_{1,1})} \lesssim C^2_0\lambda_{2q-2}(\lambda^{-1+\frac{2}{p}}_{2q}+\lambda^{-\frac{2}{p}}_{2q}),  \quad \|G_{2q}\|_{L^1_t \dot B^{0}_{\infty,1}} \lesssim  C^2_0\lambda_{2q-2}.
\end{align}
\end{proposition}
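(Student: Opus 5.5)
We follow the template of Proposition~\ref{Pro-F}. The idea is to bound $G_{2q}$ in the two endpoint spaces $L^1_t\dot B^{-1}_{\infty,\infty}$ and $L^1_t\dot B^{1}_{\infty,\infty}$, interpolate to get $L^1_t\dot B^0_{\infty,1}$, and then use compact support to pass to the $L^p$, $L^2$ and $L^1$ based norms. Since $(\wpp_{2q}+\wss_{2q})$ and $B_{2q-2}$ are divergence free, we first rewrite
\begin{align*}
G_{2q}=\Div\bigl(B_{2q-2}\otimes(\wpp_{2q}+\wss_{2q})-(\wpp_{2q}+\wss_{2q})\otimes B_{2q-2}\bigr).
\end{align*}
This divergence form is the key gain: it lets the $\dot B^{-1}$ norm be controlled by an $L^1_tL^\infty$ product, with no derivative falling on the fast oscillation $e^{\ii\lambda_{2q}k\cdot x}$.

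\textbf{Step 1 (negative norm).} Using $\|\Div M\|_{\dot B^{-1}_{\infty,\infty}}\lesssim\|M\|_{L^\infty}$, we get
\[
\|G_{2q}\|_{L^1_t\dot B^{-1}_{\infty,\infty}}\lesssim \|B_{2q-2}\|_{L^\infty_tL^\infty}\bigl(\|\wpp_{2q}\|_{L^1_tL^\infty}+\|\wss_{2q}\|_{L^1_tL^\infty}\bigr).
\]
By \eqref{u-Linfty} the first factor is at most $C_0\lambda_{2q-2}$. By \eqref{es-wp-BN} with $N=0$, integrating $e^{-\lambda_{2q}^2t}$ in time gives $\|\wpp_{2q}\|_{L^1_tL^\infty}\lesssim C_0^{3/4}\lambda_{2q}^{-1}$ and $\|\wss_{2q}\|_{L^1_tL^\infty}\lesssim C_0^{3/4}\lambda_{2q-1}\lambda_{2q}^{-2}\le C_0^{3/4}\lambda_{2q}^{-1}$. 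Hence
\[
\|G_{2q}\|_{L^1_t\dot B^{-1}_{\infty,\infty}}\lesssim C_0^2\lambda_{2q-2}\lambda_{2q}^{-1}.
\]

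\textbf{Step 2 (positive norm).} We use the product rule in the original form $w\cdot\nabla B-B\cdot\nabla w$, with $w:=\wpp_{2q}+\wss_{2q}$, and put two derivatives in $L^\infty$:
\[
\|G_{2q}\|_{L^1_t\dot B^{1}_{\infty,\infty}}\lesssim \|w\|_{L^1_tC^2}\|B_{2q-2}\|_{L^\infty_tC^2}.
\]
Here $\|w\|_{L^1_tC^2}\lesssim C_0\lambda_{2q}$ by \eqref{es-wp-BN}. The $C^2$ norm of $B_{2q-2}$ is not directly among the hypotheses. We handle it through the explicit structure of $B_{2q-2}=\sum\dmhd_{2l}$ and \eqref{e-wmdm}, together with parabolic smoothing, which should give a polynomial bound $\lambda_{2q-2}^{O(1)}$. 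Alternatively, one can split the time integral and use $\|B_{2q-2}\|_{L^\infty_t\dot B^0_{\infty,1}}\le C_0\lambda_{2q-2}$ paired with $\|w\|_{L^1_t\dot B^2_{\infty,1}}$, plus $\|B_{2q-2}\|_{L^1_t\dot B^2_{\infty,1}}\le C_0\lambda_{2q-2}$ paired with $\|w\|_{L^\infty_tL^\infty}$, via Bony's paraproduct decomposition. In either case we obtain a bound of the form $C_0^2\lambda_{2q-2}^{c}\lambda_{2q}$.

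\textbf{Step 3 (interpolation and transfer).} Interpolating as in \eqref{E1-B0} gives
\[
\|G_{2q}\|_{L^1_t\dot B^0_{\infty,1}}\lesssim \|G_{2q}\|^{1/2}_{L^1_t\dot B^{-1}_{\infty,\infty}}\|G_{2q}\|^{1/2}_{L^1_t\dot B^{1}_{\infty,\infty}}.
\]
The main obstacle is here. Plain interpolation gives $\lambda_{2q-2}^{(1+c)/2}$, and the claimed bound $C_0^2\lambda_{2q-2}$ requires the positive-norm estimate to carry no worse factor than $\lambda_{2q-2}$. If that fails, we instead estimate $\dot B^0_{\infty,1}$ directly by paraproducts: the high–low pieces give $\|w\|_{L^1_t\dot B^1_{\infty,1}}\|B\|_{L^\infty_tL^\infty}\lesssim C_0\lambda_{2q-2}$, and the low–high pieces give $\|w\|_{L^2_tL^\infty}\|B\|_{L^2_t\dot B^1_{\infty,1}}$, which is controlled using \eqref{u-L1C1}.

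Finally, $G_{2q}$ is supported in the compact set where $\wpp_{2q}+\wss_{2q}$ lives. Interpolating between $\dot B^{-1}$ and $\dot B^0$ with exponents $1-\frac2p,\frac2p$ (respectively $\frac2p,1-\frac2p$), and using the compact-support embeddings exactly as in \eqref{E-Bp}--\eqref{E-B_1}, we obtain the stated bound $C_0^2\lambda_{2q-2}(\lambda_{2q}^{-1+\frac2p}+\lambda_{2q}^{-\frac2p})$.
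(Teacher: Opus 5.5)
Your fallback in Step 3 is essentially the paper's proof. The paper bounds $\|G_{2q}\|_{L^1_t\dot B^{-1}_{\infty,\infty}}$ exactly as in your Step 1. It then estimates $\|G_{2q}\|_{L^1_t\dot B^0_{\infty,1}}$ directly by $\|(\wpp_{2q}+\wss_{2q})\otimes B_{2q-2}\|_{L^1_t\dot B^1_{\infty,1}}\lesssim \|(\wpp_{2q},\wss_{2q})\|_{L^2_tL^\infty}\|B_{2q-2}\|_{L^2_t\dot B^1_{\infty,1}}+\|(\wpp_{2q},\wss_{2q})\|_{L^1_t\dot B^1_{\infty,1}}\|B_{2q-2}\|_{L^\infty_{t,x}}$, and finally transfers to the other norms by compact support.

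One correction: the factor $\|B_{2q-2}\|_{L^2_t\dot B^1_{\infty,1}}$ is not controlled by \eqref{u-L1C1}. That hypothesis only gives $L^1_t\dot B^1_{\infty,1}$ and $L^2_t\dot B^0_{\infty,1}$. The bound comes instead from interpolating \eqref{u-Linfty}: $\|B_{2q-2}\|^2_{L^2_t\dot B^1_{\infty,1}}\lesssim\|B_{2q-2}\|_{L^\infty_t\dot B^0_{\infty,1}}\|B_{2q-2}\|_{L^1_t\dot B^2_{\infty,1}}\le C_0^2\lambda_{2q-2}^2$. This is precisely where the factor $\lambda_{2q-2}$ in the statement comes from.

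Your primary route is also not actually obstructed. Drop the $L^\infty_tC^2$ version of Step 2, since that norm of $B_{2q-2}$ is not among the hypotheses. Your alternative pairing, applied to the divergence form via the product estimate in $\dot B^2_{\infty,\infty}$, gives $\|G_{2q}\|_{L^1_t\dot B^1_{\infty,\infty}}\lesssim\|w\|_{L^\infty_tL^\infty}\|B_{2q-2}\|_{L^1_t\dot B^2_{\infty,1}}+\|w\|_{L^1_t\dot B^2_{\infty,1}}\|B_{2q-2}\|_{L^\infty_t\dot B^0_{\infty,1}}\lesssim C_0^{7/4}\lambda_{2q}\lambda_{2q-2}$. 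So $c=1$, and Cauchy--Schwarz in time with Step 1 yields exactly $C_0^{7/4}\lambda_{2q-2}$.

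This is a legitimate variant of the paper's argument. It uses only the endpoint bounds in \eqref{u-Linfty}, with no intermediate $L^2_t\dot B^1_{\infty,1}$ interpolation of $B_{2q-2}$, at the cost of one extra interpolation on $G_{2q}$ itself. The paper's direct $\dot B^1_{\infty,1}$ product estimate is shorter.
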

 \begin{proof}
  By \eqref{u-Linfty} and Proposition \ref{Est-wpws}, we have
  \begin{align*}
     \|G_{2q}\|_{L^1_t\dot B^{-1}_{\infty,\infty}}\lesssim  \|(\wpp_{2q}+\wss_{2q})\otimes B_{2q-2}\|_{L^1_t \dot{B}^{0}_{\infty, \infty}}\lesssim \|(\wpp_{2q},\wss_{2q})\|_{L^1_tL^\infty }\|B_{2q-2}\|_{L^\infty_{t,x}}\lesssim C^2_0\lambda^{-1}_{2q}\lambda_{2q-2}
 \end{align*}
 and
 \begin{align*}
     \|G_{2q}\|_{L^1_t\dot B^{0}_{\infty,1}}\lesssim&  \|(\wpp_{2q}+\wss_{2q})\otimes B_{2q-2}\|_{L^1_t \dot{B}^{1}_{\infty, 1}}\\
     \lesssim& \|(\wpp_{2q},\wss_{2q})\|_{L^2_tL^\infty }\|B_{2q-2}\|_{L^2_{t}\dot B^{1}_{\infty,1}}+\|(\wpp_{2q},\wss_{2q})\|_{L^1_t\dot B^{1}_{\infty,1} }\|B_{2q-2}\|_{L^\infty_{t,x}}\\
     \lesssim& C^2_0\lambda_{2q-2}.
 \end{align*}
Since $G_{2q}$ has compact support, one obtains from the above two estimates that
\begin{align*}
 \|G_{2q}\|_{L^1_t(\dot B^{-1+\frac{2}{p}}_{p,1}\cap \dot B^{-\frac{2}{p}}_{2,1}\cap B^{-1}_{1,1})}\lesssim    \|G_{2q}\|_{L^1_t(\dot B^{-1+\frac{2}{p}}_{\infty,1}\cap \dot B^{-\frac{2}{p}}_{\infty,1}\cap \dot B^{-1}_{\infty,1})} \lesssim C^2_0\lambda_{2q-2}(\lambda^{-1+\frac{2}{p}}_{2q}+\lambda^{-\frac{2}{p}}_{2q})
\end{align*}
and thus complete the proof of Proposition \ref{G2q}.
 \end{proof}
\subsection{Estimates of $\wmhd_{2q}$ and  $\dmhd_{2q}$}\label{E-wmhd} Based on the above estimates for the forcing terms $E_{2q}, F_{2q}, G_{2q}$, we now formulate the global-in-time existence of solutions to the system \eqref{e:wt} as follows.
\begin{proposition}\label{wtq-H3}
 For the fixed $2<p<\infty$ given in Proposition~\ref{iteration}, there exists a global-in-time solution $(\wmhd_{2q},\dmhd_{2q}) \in C^\infty_{t,x}(\mathbb{R}^{+}\times\mathbb{R}^2)$ of the system \eqref{e:wt}
satisfying, 
 \begin{align}
&\|(\wmhd_{2q},\dmhd_{2q})\|_{\widetilde L^{\infty}_t\bigl(\dot B^{-1+\frac{2}{p}}_{p,1}\cap\dot B^{-\frac{2}{p}}_{2,1}\bigr)\cap
L^{1}_t\bigl(\dot B^{1+\frac{2}{p}}_{p,1}\cap\dot B^{2-\frac{2}{p}}_{2,1}\bigr)} \le \lambda^{-20}_{2q-1}, \label{wtq12}
\end{align}
and
\begin{align}\label{wd-B0}
\|(\wmhd_{2q},\dmhd_{2q})\|_{\widetilde L^{\infty}_t\dot{B}^{0}_{\infty,1}\cap L^{1}_t\dot{B}^{2}_{\infty,1}} \le \lambda^{2r}_{2q}. 
\end{align}
Moreover, we have
\begin{align}\label{E-B-1}
\|(\wmhd_{2q},\dmhd_{2q})\|_{L^{\infty}([0,T];{B}^{-1}_{1,1})\cap L^{1}([0,T];{B}^{1}_{1,1})}\le \lambda^{-15}_{2q-1}.
\end{align}
\end{proposition}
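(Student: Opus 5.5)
The plan is a small-data global well-posedness argument for \eqref{e:wt}, viewed as a perturbation of a linear transport--diffusion system. The background field $v:=u_{2q-2}+\wpp_{2q}+\wss_{2q}$ and $B_{2q-2}$ are given and smooth. The forcing $E_{2q}+F_{2q}+G_{2q}$ is tiny in the critical spaces $L^1_t(\dot B^{-1+\frac2p}_{p,1}\cap\dot B^{-\frac2p}_{2,1})$ by Propositions \ref{E2q}, \ref{Pro-F}, \ref{G2q}: the bound is $\lesssim C_0^3\lambda_{2q-1}^{-90}+C_0^2\lambda_{2q-2}\lambda_{2q}^{-\min\{1-\frac2p,\frac2p\}+r}$, and since $\lambda_{2q}=\lambda_{2q-1}^b$ with $b$ huge, this is $\ll\lambda_{2q-1}^{-30}$. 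The initial datum is $\lambda_{2q-1}^{-50}f$.

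\textbf{Step 1 (critical bound \eqref{wtq12}).} Set $X:=\widetilde L^\infty_t(\dot B^{-1+\frac2p}_{p,1}\cap\dot B^{-\frac2p}_{2,1})\cap L^1_t(\dot B^{1+\frac2p}_{p,1}\cap\dot B^{2-\frac2p}_{2,1})$. Apply Lemma \ref{T-D} to each component (after Leray projection for $\wmhd_{2q}$) with transport field $v+\wmhd_{2q}$, $p_1=\infty$. The exponential factor is then $e^{CV(T)}$ with $V\lesssim\|(u_{2q-2},\wpp_{2q},\wss_{2q})\|_{L^1_t\dot B^1_{\infty,1}}$, which is $\lesssim C_0 q+C_0^{3/4}$ by \eqref{u-L1C1} and \eqref{es-wp-BN}. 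The essential observation is that this is only linear in $q$, so $e^{CV}\le e^{CC_0q}\ll\lambda_{2q-1}^{1}$. The remaining linear terms $\wmhd\cdot\nabla v$, $B_{2q-2}\cdot\nabla\dmhd$, $\dmhd\cdot\nabla B_{2q-2}$, $\dmhd\cdot\nabla v$, $B_{2q-2}\cdot\nabla\wmhd$, $\wmhd\cdot\nabla B_{2q-2}$ are written in divergence form and estimated by paraproducts in $L^1_t\dot B^{-1+\frac2p}_{p,1}$. This gives, for instance, $\|B_{2q-2}\otimes\dmhd\|_{L^1\dot B^{\frac2p}_{p,1}}\lesssim\int\|B_{2q-2}\|_{\dot B^1_{\infty,1}\cap L^\infty}\|\dmhd\|_{\dot B^{\frac2p}_{p,1}}$. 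The $\widetilde L^\infty$ part is absorbed via a time-weighted Gronwall argument, costing another factor $e^{C\|(u_{2q-2},B_{2q-2},\wpp,\wss)\|_{L^1\dot B^1_{\infty,1}\cap L^2\dot B^0_{\infty,1}}^2}$. Here \eqref{u-L1C1} is only linear, but squaring $L^2_t$ gives $e^{CC_0^2q^2}$, still far below $\lambda_{2q-1}$. Quadratic terms are handled by the standard product law $\|fg\|_{\dot B^{\frac2p}_{p,1}}\lesssim\|f\|_{\dot B^{\frac2p}_{p,1}}\|g\|_{L^\infty}$ and interpolation, giving $\|\cdot\|_X^2$. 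A continuity/bootstrap argument closes with $\|(\wmhd,\dmhd)\|_X\le e^{CC_0^2q^2}(\lambda_{2q-1}^{-50}a+\lambda_{2q-1}^{-30})\le\lambda_{2q-1}^{-20}$. Local existence and smoothness come from a standard Picard scheme plus parabolic smoothing, and the a priori bound globalizes them.

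\textbf{Step 2 (\eqref{wd-B0} and \eqref{E-B-1}).} With \eqref{wtq12} in hand, rerun Lemma \ref{T-D} in $\dot B^0_{\infty,1}$, i.e.\ $p=\infty$, $s=0$, which is allowed. The forcing now has size $\lesssim C_0^3\lambda_{2q}^r+C_0^2\lambda_{2q-2}$ in $L^1\dot B^0_{\infty,1}$. The linear background terms contribute $\lambda_{2q-1}$-sized coefficients times the already small critical norms, and the exponential factor is again $e^{CC_0q}$, yielding $\le\lambda_{2q}^{2r}$. For \eqref{E-B-1}, work on $[0,T]$ in inhomogeneous $B^{-1}_{1,1}\cap B^1_{1,1}$. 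The forcing is $\lesssim\lambda_{2q-1}^{-30}$ by the same propositions and $f\in B^{-1}_{1,1}$. Products are bounded using the $L^\infty$/$\dot B^0_{\infty,1}$ control of the background together with the bounds just proved. Factors $T$ and $e^{CC_0q}$ are harmless against $\lambda_{2q-1}^{-15}$.

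\textbf{Main obstacle.} The delicate point is ensuring that the Gronwall factors in Step 1 depend only on the $L^1\dot B^1_{\infty,1}\cap L^2\dot B^0_{\infty,1}$ norms, which grow like $q$. They must not involve $\|u_{2q-2}\|_{L^\infty\dot B^0_{\infty,1}}\sim C_0\lambda_{2q-2}$ multiplied by unweighted time, which would not be integrable on $\mathbb R^+$. I would handle this by placing the $L^\infty$-in-time background factor only against $L^1_t\dot B^{1+\frac2p}_{p,1}$ norms of the unknown, so that the estimate becomes $\lambda_{2q-2}\|\cdot\|_X$. For low frequencies this cannot be absorbed, so I would split the unknown into frequencies $\le\lambda_{2q-2}^{2}$ and above, and treat the low part through the weighted exponential method. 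This is the step to verify most carefully.
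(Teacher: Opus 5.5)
Your outline is sound, but at the key structural point it takes a different route from the paper.

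\textbf{The paper's route.} The paper does not estimate $(\wmhd_{2q},\dmhd_{2q})$ directly. It passes to $a_{2q}=\wmhd_{2q}+\dmhd_{2q}$ and $c_{2q}=\wmhd_{2q}-\dmhd_{2q}$, as in \eqref{e:acwt}. In these variables the magnetic cross terms $B_{2q-2}\cdot\nabla\dmhd_{2q}$ and $B_{2q-2}\cdot\nabla\wmhd_{2q}$ recombine into pure transport terms, even though each is first order in the unknown. So $B_{2q-2}$ is simply absorbed into the divergence-free transport fields $c_{2q}+u_{2q-2}+\wpp_{2q}+\wss_{2q}-B_{2q-2}$ and $a_{2q}+u_{2q-2}+\wpp_{2q}+\wss_{2q}+B_{2q-2}$ of Lemma \ref{T-D}. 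The only remaining linear terms have the undifferentiated form $c_{2q}\cdot\nabla(\cdots)$ and $a_{2q}\cdot\nabla(\cdots)$. Hence only $\int_0^\infty\|\nabla(u_{2q-2},B_{2q-2},\wpp_{2q},\wss_{2q})\|_{L^\infty}\,\dd s\le C_0(q+1)$ enters, once in the exponential of Lemma \ref{T-D} and once in an ordinary Gronwall. The resulting factor, even of size $\exp(\exp(CC_0q))$, is beaten by $\lambda_{2q-1}^{5}$ once $b$ is large.

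\textbf{Your route.} Keeping primitive variables, you must treat $B_{2q-2}\cdot\nabla\dmhd_{2q}$ and $B_{2q-2}\cdot\nabla\wmhd_{2q}$ as sources. That is exactly why you need the $L^2_t\dot B^0_{\infty,1}$ part of \eqref{u-L1C1} and a weighted-in-time Chemin--Lerner Gronwall costing $e^{CC_0^2q^2}$. This also closes, since such growth is negligible against the doubly exponential $\lambda_{2q-1}$. However, it requires re-deriving Lemma \ref{T-D} with a damping weight, which the symmetrization avoids entirely.

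\textbf{Your ``main obstacle'' is not one.} The frequency splitting at $\lambda_{2q-2}^2$ is unnecessary on either route. A term like $\wmhd_{2q}\cdot\nabla u_{2q-2}$ is bounded in $\dot B^{-1+\frac2p}_{p,1}$ by $\|u_{2q-2}\|_{\dot B^1_{\infty,1}}\|\wmhd_{2q}\|_{\dot B^{-1+\frac2p}_{p,1}}$ via paraproducts, since the index lies in $(-1,0)$. So no $L^\infty_t$ background norm is ever needed in Step 1.

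\textbf{Minor corrections.}
\begin{itemize}
\item Your product bound for $B_{2q-2}\otimes\dmhd_{2q}$ is not dimensionally consistent as written. The correct split is $\|B_{2q-2}\|_{L^\infty}\|\dmhd_{2q}\|_{\dot B^{2/p}_{p,1}}+\|B_{2q-2}\|_{\dot B^1_{\infty,1}}\|\dmhd_{2q}\|_{\dot B^{-1+2/p}_{p,1}}$. The first piece is $L^2_t$-weighted and the second is $L^1_t$-Gronwallable.
\item In Step 2, ``$\lambda_{2q-1}$-sized coefficients times small critical norms'' is fine for $u_{2q-2}$, $B_{2q-2}$ and $\wss_{2q}$.
\item Terms containing $\wpp_{2q}$, such as $\wmhd_{2q}\cdot\nabla\wpp_{2q}$, must instead go into the Gronwall exponent via $\|\wpp_{2q}\|_{L^1_t\dot B^1_{\infty,1}}\lesssim C_0^{3/4}$. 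Pairing $\|\wpp_{2q}\|_{L^\infty_t\dot B^0_{\infty,1}}\sim\lambda_{2q}$ with $\lambda_{2q-1}^{-20}$ would overshoot $\lambda_{2q}^{2r}$.
\end{itemize}
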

\begin{proof}
    Since the initial data are smooth, local well-posedness for \eqref{e:wt} follows directly from standard theory.  Therefore, it suffices to prove the global-in-time existence.
    
    Let $a_{2q}:=\wmhd_{2q}+\dmhd_{2q}$ and $c_{2q}:=\wmhd_{2q}-\dmhd_{2q}$. The system \eqref{e:wt} reduces to
\begin{equation}
\left\{ \begin{alignedat}{-1}
&\partial_t a_{2q}-\Delta a_{2q}+ (c_{2q}+u_{2q-2}+\wpp_{2q}+\wss_{2q}-B_{2q-2})\cdot\nabla a_{2q} +\nabla p^{(\textup{m})}_{2q } \\
 &\qquad= -c_{2q }\cdot\nabla  (u_{2q-2}+\wpp_{2q}+\wss_{2q}-B_{2q-2})   -E_{2q }-F_{2q }-G_{2q},
\\
&\partial_t c_{2q}-\Delta c_{2q}+  (a_{2q}+u_{2q-2}+\wpp_{2q}+\wss_{2q}+B_{2q-2})\cdot\nabla c_{2q} +\nabla p^{(\textup{m})}_{2q }\\
&\qquad=-a_{2q }\cdot\nabla(u_{2q-2}+\wpp_{2q}+\wss_{2q}-B_{2q-2})-E_{2q }-F_{2q }+G_{2q} ,\\
& \Div a_{2q}=\Div c_{2q}= 0,
  \\
  & (a_{2q},c_{2q}) |_{t=0}=  (\lambda^{-50}_{2q-1}f(x),~-\lambda^{-50}_{2q-1}f(x)).
\end{alignedat}\right.
\label{e:acwt}
\end{equation}
For any $T>0$, we define 
\[
\|(a_{2q},c_{2q})\|_{E_T} := 
\|(a_{2q},c_{2q})\|_{\widetilde L^\infty_T\bigl(\dot B^{-1+\frac{2}{p}}_{p,1}\cap\dot B^{-\frac{2}{p}}_{2,1}\bigr)\cap
L^1_T\bigl(\dot B^{1+\frac{2}{p}}_{p,1}\cap\dot B^{2-\frac{2}{p}}_{2,1}\bigr)}
\]
and
\[
T^{\star} := \sup\Bigl\{ T\ge0 \;\big|\; \|(a_{2q},c_{2q})\|_{E_T}\le \lambda^{-20}_{2q-1} \Bigr\}.
\]
Using Proposition~\ref{T-D}, due to 
$$\Div\, u_{2q-2}=\Div\, B_{2q-2}=\Div\, \wpp_{2q}=\Div\, \wss_{2q}=0,$$
we obtain, for every $T\le T^{\star}$,
\begin{align}
\|(a_{2q},c_{2q})\|_{E_T}\le &
C\exp\Bigl(C\int_0^T\|\nabla(a_{2q},c_{2q},\wss_{2q},\wpp_{2q},u_{2q-2},B_{2q-2})(s)\|_{L^\infty}\,\mathrm{d}s\Bigr)\nonumber\\
&\times\Bigl(\lambda^{-50}_{2q-1}\|f\|_{\dot B^{-1+\frac{2}{p}}_{p,1}\cap\dot B^{-\frac{2}{p}}_{2,1}}
+\int_0^T\|(E_{2q}, F_{2q}, G_{2q})(s)\|_{\dot B^{-1+\frac{2}{p}}_{p,1}\cap\dot B^{-\frac{2}{p}}_{2,1}}\mathrm{d}s\nonumber\\
&\quad+\int_0^T\|\nabla(\wss_{2q},\wpp_{2q},u_{2q-2},B_{2q-2})(s)\|_{L^\infty}
\|(a_{2q},c_{2q})(s)\|_{\dot B^{-1+\frac{2}{p}}_{p,1}\cap\dot B^{-\frac{2}{p}}_{2,1}}\,\mathrm{d}s\Bigr). \label{estE}
\end{align}
From Propositions~\ref{E2q}--\ref{G2q}, we have, for any $T>0$,
\begin{align}\label{E-F-G}
\int_0^T\|(E_{2q}, F_{2q},G_{2q})(s)\|_{\dot B^{-1+\frac{2}{p}}_{p,1}\cap\dot B^{-\frac{2}{p}}_{2,1}}\mathrm{d}s
\le C^3_0\lambda^{-30}_{2q-1}.    
\end{align}
Moreover, by the inductive estimate \eqref{u-L1C1} and Proposition \ref{Est-wpws}, we have
\begin{align}
 \int_0^T\|\nabla(\wss_{2q},\wpp_{2q},u_{2q-2},B_{2q-2})(s)\|_{L^\infty}\,\mathrm{d}s \le C_0(q+1). \label{boundLinf}   
\end{align}
For any $T\le T^{\star}$, substituting 
\eqref{E-F-G} and \eqref{boundLinf}  into \eqref{estE}  yields that
\begin{align*}
\|(a_{2q},c_{2q})\|_{E_T}
\le & C\exp\big(CC_0(q+1)\big)\Big(2C^3_0\lambda^{-30}_{2q-1}\\
&+\int_0^T\|\nabla (\wss_{2q}, \wpp_{2q}, u_{2q-2},b_{2q-2})(s)\|_{L^\infty}\|(a_{2q},c_{2q})(s)\|_{\dot B^{-1+\frac{2}{p}}_{p,1}\cap\dot B^{-\frac{2}{p}}_{2,1}}\dd s\Big).    
\end{align*}
By Gronwall's inequality and \eqref{boundLinf}, we have
\begin{align*}
  &\|(a_{2q},c_{2q})\|_{E_T}\\
\le&  2\lambda^{-30}_{2q-1}CC^3_0\exp(CC_0(q+1)\big)\exp\Big(C\exp\big(CC_0(q+1))\int_0^T\|\nabla (\wss_{2q}, \wpp_{2q}, u_{2q-2},b_{2q-2})(s)\|_{L^\infty}\Big)\\
\le&2\lambda^{-30}_{2q-1}CC^3_0\exp(CC_0(q+1)\big)\exp\Big(CC_0(q+1)\exp\big(CC_0(q+1))\Big)\\
\le&2\lambda^{-30}_{2q-1}  \exp(2\exp(2CC^3_0(q+1))) \le \lambda^{-25}_{2q-1},
\end{align*}
where the last inequality holds by choosing $b$ large enough such that
$b\ge 10e^{10CC^3_0}$ and $a>2e$.
A standard continuity argument then shows $T^{\star}=\infty$, and we actually have
\begin{align}\label{E-ac--1}
\|(a_{2q},c_{2q})\|_{_{\widetilde L^\infty_t\bigl(\dot B^{-1+\frac{2}{p}}_{p,1}\cap\dot B^{-\frac{2}{p}}_{2,1}\bigr)\cap
L^1_t\bigl(\dot B^{1+\frac{2}{p}}_{p,1}\cap\dot B^{2-\frac{2}{p}}_{2,1}\bigr)}}\le \lambda^{-20}_{2q-1}.
\end{align}
Note that $\|\lambda^{-50}_{2q-1} f\|_{\dot B^0_{\infty,1}\cap B^{-1}_{1,1}}\le \lambda^{-40}_{2q-1}$ and by Propositions \ref{E2q}--\ref{G2q}, we have 
\begin{align*}
 \|(E_{2q}, F_{2q}, G_{2q})\|_{L^1_t \dot B^{0}_{\infty,1}}\le C^3_0 \lambda^r_{2q},\,\,  \|(E_{2q}, F_{2q}, G_{2q})\|_{L^1_t B^{-1}_{1,1}}\le C^3_0 \lambda^{-30}_{2q-1}
\end{align*}
 Combining these estimates with the inductive bounds \eqref{u-Linfty}, \eqref{u-L1C1} and \eqref{E-ac--1}, and following  in the same argument as above, we obtain \eqref{wd-B0} and  \eqref{E-B-1}. Hence, we complete the proof of Proposition \ref{wtq-H3}.
\end{proof}

\subsection{Verifying the $2q-$th iteration}Now let us finish the proof of Proposition \ref{iteration}. With the aid of \eqref{def-wh}, Propositions \ref{Est-wpws},  \ref{cedu} and \ref{wtq-H3}, one immediately deduces that $\wpp_{2q}$, $\wss_{2q}$, $\wmhd_{2q}$ and $\dmhd_{2q}$ satisfy \eqref{def-R}--\eqref{wm-C} at $2q$-level.
  Let
   \begin{align*}
       w_{2q}=\wpp_{2q}+\wss_{2q}+\wmhd_{2q},
   \end{align*}
and
   \begin{align*}
    u_{2q}=u_{2q-2}+w_{2q},\quad B_{2q}=B_{2q-2}+\dmhd_{2q}.
\end{align*}
We immediately show \eqref{u-decomposition} and \eqref{b-decomposition} for $j=2q$. Thanks to Propositions \ref{Est-wpws} and \ref{wtq-H3}, we have
\begin{align*}
&\|(w_{2q}, \dmhd_{2q})\|_{L^2_TL^2}\le \lambda^{-15}_{2q-1}\le 2C^{3/4}_02^{-\frac{2q-1}{2}},\\
& \|(w_{2q}, \dmhd_{2q})\|_{L^\infty_t 
\dot B^{0}_{\infty,1}\cap L^1_t\dot B^2_{\infty,1}}\le    2   C^{3/4}_0 \lambda_{2q}+\lambda^{2r}_{2q}\le \frac{1}{2}C_0\lambda_{2q},\\
&\|(w_{2q}, \dmhd_{2q})\|_{L^1_t 
\dot B^{1}_{\infty,1}}\le  2C^{3/4}_0+ \lambda ^{-20}_{2q-1}\le C_0.
\end{align*}
 The above two estimates combined with \eqref{u-Linfty} --\eqref{u-L1C1} for $j=2q-1$ show that \eqref{u-Linfty} and \eqref{u-L1C1}  hold for $j=2q$.  Hence, we complete the proof of Proposition \ref{iteration} for the case where $m$ is even. For the case that $m$ is odd, we have $m=2q+1$ for some $q \ge 1$. We construct $\wpp_{2q+1}, \wss_{2q+1}$, $ \wmhd_{2q+1}$ and $\dmhd_{2q+1}$ by replacing the parameter $2q$ in definitions \eqref{def-wh}, \eqref{def-ws} and in the system \eqref{e:wt} with $2q+1$. Let $w_{2q+1}=\wpp_{2q+1}+\wss_{2q+1}+\wmhd_{2q+1}$, $u_{2q+1}=u_{2q-1}+w_{2q+1}$ and $B_{2q+1}=B_{2q-1}+\dmhd_{2q+1}$.  Following the same argument as for the even case, Proposition \ref{iteration} is  established also for odd $m$, which finishes the proof of Proposition \ref{iteration}.
\section*{Acknowledgement}
We thank Professor Mimi Dai for the valuable comments on this manuscript. This work was supported by the National Key Research and Development Program of China    (Grant number No. 2022YFA1005700) and  the National Natural Science Foundation of China (Grant numbers No.12371095, No. 12401277 and No.
12501304).

\end{document}